\newtheorem{theorem}{Theorem}
\newtheorem{proposition}{Proposition}[section]
\newtheorem{lemma}[proposition]{Lemma}
\newtheorem{corollary}[proposition]{Corollary}
\newtheorem{question}{Question}
\theoremstyle{definition}
\newtheorem*{definition}{Definition}
\newcommand{\set}[2]{\ensuremath{\{ #1 \>|\> #2 \}}}
\renewcommand{\labelenumi}{(\roman{enumi})}
\def\liebrack{\ensuremath{[\,\cdot\, , \cdot\,]}}
\DeclareMathOperator{\ad}{ad}
\DeclareMathOperator{\codim}{codim}
\begin{document}

\title{$\omega$-Lie algebras}
\author{Pasha Zusmanovich}
\address{}
\email{justpasha@gmail.com}
\date{last revised November 12, 2012}
\thanks{J. Geom. Phys. \textbf{60} (2010), 1028--1044; \texttt{arXiv:0812.0080}}

\begin{abstract}
We study a certain generalization of Lie algebras where the Jacobian
of three elements does not vanish but is equal to an expression
depending on a skew-sym\-met\-ric bilinear form.
\end{abstract}

\maketitle

\section*{Introduction}

An anticommutative algebra $L$ with multiplication $\liebrack$ over a field $K$ is 
called an \textit{$\omega$-Lie algebra} if there is a bilinear form 
$\omega: L \times L \to K$ such that
\begin{equation}\label{jacobi}
[[x,y],z] + [[z,x],y] + [[y,z],x] = \omega(x,y) z + \omega(z,x) y + \omega(y,z) x
\end{equation}
for any $x, y, z \in L$. We will refer to this identity as 
the \textit{$\omega$-Jacobi identity}.

These algebras were introduced by Nurowski in a recent interesting paper 
\cite{nurowski}\footnote[2]{
In \cite{nurowski}, the term \textit{$\omega$-deformed Lie algebra} was used.
We find this term somewhat misleading, as these are not deformations of Lie algebras 
in the usual strict sense. See, however, Question \ref{quest-deform} in 
\S \ref{section-quest}.}.
Nurowski was motivated by some physical considerations, but our treatment here 
is a purely mathematical one.

$\omega$-Lie algebras are obvious generalizations of Lie algebras, the latter 
corresponding to the case $\omega = 0$.
It follows immediately from the definition that $\omega$ is skew-sym\-met\-ric.
As noted in \cite{nurowski}, there are no $1$- and $2$-dimensional
$\omega$-Lie algebras which are not Lie algebras. 
Nurowski exhibited nontrivial examples of $3$-dimensional $\omega$-Lie algebras 
(actually, he fully classified them over the field of real numbers).

It seems that no structures like this were studied before. 
Of course, altered Jacobi identities appeared previously in the literature,
the closest things we are aware of are, first, algebras studied by Sagle in a series of papers started in the 1960s
(see, for example, \cite{sagle} and references therein),
second, structures which, as we suspect, started to appear a long time ago in the 
literature (see, for example, \cite{liu}), and recently were advertised and systematically studied by Hartwig, Larsson and Silvestrov in \cite{hls} under the name of Hom-Lie algebras, and, third, $L_\infty$-algebras and their relatives.
Sagle's algebras are obtained by taking the direct sum decomposition $L = H \oplus M$ 
of a Lie algebra $L$, where $H$ is a subalgebra, $[H,M] \subseteq M$, 
and defining a new algebra structure on $H$ as the projection of the Lie bracket on it.
Such algebras satisfy the condition
$$
[[x,y],z] + [[z,x],y] + [[y,z],x] = [h(x,y),z] + [h(z,x),y] + [h(y,z),x]
$$
where $h: H \times H \to M$ is the projection of the Lie bracket on $M$.
Hom-Lie algebras satisfy the condition
$$
[[x,y],z] + [[z,x],y] + [[y,z],x] = [[x,y],\sigma(z)] + [[z,x],\sigma(y)] + [[y,z],\sigma(x)]
$$
where $\sigma: L \to L$ is a linear map.
In both of these cases, the Jacobi identity is altered by maps to the underlying 
algebra, while the $\omega$-Jacobi identity is altered by the map $\omega$
to the ground field, so their similarity is probably too superficial. 
In a sense, the $\omega$-Jacobi identity should be much more restrictive.

$L_\infty$-algebras are much more general structures encompassing the notion of a 
(co)chain complex and a Lie algebra. The Jacobi identity in these structures is valid 
``up to homotopy'' (see for example, the conditions defining the so-called $2$-term
$L_\infty$-algebras in \cite[Lemma 4.3.3]{baez}, especially  the condition (g)). 
However, a tedious but straightforward computation which we will omit here, shows that this 
``homotopy'', in general, cannot take the form of the right-hand side of (\ref{jacobi}), so 
$\omega$-Lie algebras cannot serve as initial terms of $L_\infty$-algebras, except
for some degenerate trivial cases.

Unlike most of the classes of algebras studied, the $\omega$-Jacobi identity does not
single out a variety of algebras. In fact, the class of $\omega$-Lie algebras
is not closed under the usual constructions employed in structural
theory of algebras, such as taking the direct sum or tensoring with commutative associative algebra.
(It is however closed under taking subalgebras and quotients. The first fact is obvious,
the second one is not and comes after a bit of additional work, as shown below).
Moreover, the $\omega$-Jacobi identity suggests 
that any $\omega$-Lie algebra with nontrivial $\omega$ 
should be close to a perfect one ($L = [L,L]$), thus largely excluding phenomena
related to nilpotency and solvability.
Such algebras cannot be graded with a large number of graded components, 
so an analog of the root space decomposition with respect to a Cartan subalgebra,
if it exists, should have properties different from the Lie-algebraic case.
%Thus, being a straightforward generalizations of Lie algebras, $\omega$-Lie algebras
%with nontrivial $\omega$ promise to have a rather unusual structure theory.
%
%Unfortunately, this promise do not withstand a more thorough examination, 
%as

The main result of this paper roughly says: finite-dimensional
$\omega$-Lie algebras which are not Lie algebras are either low-dimensional,
or possess a very ``degenerate'' structure -- in particular, 
have an abelian subalgebra of small 
codimension with further restrictive conditions.

In the first two short sections of this paper we observe some elementary,
but useful facts about ground field extension and modules over $\omega$-Lie algebras, 
needed in subsequent sections.
In \S 3 we establish a sort of analog of the $\omega$-Jacobi identity in 4 variables
(Lemma \ref{4-var}) which will serve as our main working tool, 
and establish with its help some auxiliary facts about ideals of $\omega$-Lie algebras.
\S 4 contains a treatment of a rudimentary analog of the root space decomposition.
\S 5 contains results about quasi-ideals, and
establishes a preliminary division of finite-dimensional $\omega$-Lie algebras 
(Lemma \ref{main-lemma}) into the following three classes: 
those having a Lie subalgebra of codimension $1$, those having $Ker\,\omega$
of codimension $2$, and those of the form
of an abelian extension of a simple $\omega$-Lie algebra with a
non-degenerate $\omega$.

The next three sections contain treatments of these three classes.
Though we are unable to achieve a complete classification
(and doubt a reasonable classification exists), we show that all $\omega$-Lie
algebras under consideration are ``degenerate'' in the sense that they contain
an abelian subalgebra of a small codimension.
In \S 8 we prove that $\omega$-Lie algebras with a nondegenerate $\omega$ do not exist,
thus completing the classification.
\S 9 contains formulations of two main theorems, which describe the structure of 
finite-dimensional $\omega$-Lie algebras, and claim that there are no semisimple 
$\omega$-Lie algebras (which are not Lie
algebras) in high dimensions.
In \S 10 we discuss what identities may be satisfied by $\omega$-Lie algebras,
and the last \S 11 contains some further questions and speculations.
Appendix contains description of the GAP code used in analysis of low-dimensional
algebras in \S 8.

We note that in the course of the study of $\omega$-Lie algebras many notions
in the Lie algebras theory -- derivations, second cohomology, quasi-ideals -- arise naturally.

\section*{Notation and conventions}

The ground field $K$ is assumed to be an arbitrary field 
of characteristic different from 2 and 3, unless stated otherwise. 

Our terminology concerning bilinear forms is standard. Let $\omega$
be a skew-sym\-met\-ric bilinear form on a linear space $V$.
A subspace $W \subseteq V$ is called \textit{isotropic} if $\omega(W,W) = 0$.
Let $W^\bot = \set{x\in V}{\omega(x,W) = 0}$ denote the orthogonal complement to a subspace $W$.
Let $Ker\,\omega = V^\bot$ denote the kernel of $\omega$.
For the standard results from linear algebra we use, see, for example, \cite{bourbaki}.

The Lie-algebraic notions that do not involve the form $\omega$ in
their definitions are extended verbatim to $\omega$-Lie algebras:
for example, we speak about commutators, commutant, adjoint endomorphisms (which are 
right multiplications), subalgebras, ideals, simple, semisimple and abelian algebras, and nilpotent elements.

%Following Lie algebras tradition, the left multiplication by element $x$ 
%in $\omega$-Lie algebra is denoted $\ad(x)$.

\section{Extension of the ground field}

Sometimes in the subsequent reasonings, we, naturally, would like to have the luxury to work over
an algebraically closed ground field. For this, one should to be sure that the property
of being an $\omega$-Lie algebra is preserved under the ground field extension. 
This is indeed the case, as the following elementary proposition shows.

\begin{proposition}\label{ext}
Let $L$ be an algebra over a field $K$, $K \subset F$ a field extension.
Then $L$ is an $\omega$-Lie algebra over $K$ for some bilinear form $\omega$ on $L$ if and only if 
$L\otimes_K F$ is an $\Omega$-Lie algebra over $F$ 
for some bilinear form $\Omega$ on $L\otimes_K F$.
\end{proposition}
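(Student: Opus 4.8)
The plan is to prove the nontrivial implication --- that if $L \otimes_K F$ is an $\Omega$-Lie algebra then $L$ is an $\omega$-Lie algebra --- by showing that the form $\Omega$, though a priori only $F$-bilinear, is automatically the scalar extension of a $K$-bilinear form on $L$. The converse is immediate: if $L$ satisfies (\ref{jacobi}) with a form $\omega$, then both sides of (\ref{jacobi}) extend to $F$-(multi)linear maps on $L \otimes_K F$, so $L \otimes_K F$ is an $\Omega$-Lie algebra with $\Omega = \omega \otimes_K F$.

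For the main implication, fix a $K$-basis $\{e_i\}_{i\in I}$ of $L$; it is simultaneously an $F$-basis of $L\otimes_K F$. Write $J(x,y,z) = [[x,y],z] + [[z,x],y] + [[y,z],x]$ for the Jacobian, which is an alternating trilinear map; note that the coordinates of $J(e_i,e_j,e_k)$ in the basis $\{e_i\}$ are bilinear expressions in the structure constants of $L$, hence lie in $K$. If $|I| \le 2$, then $J$ vanishes identically (an alternating trilinear map on a space of dimension at most $2$ is zero), so $L$ is a Lie algebra, hence an $\omega$-Lie algebra with $\omega = 0$; so assume $|I| \ge 3$. For any pair $j \ne k$, choose $i \in I \setminus \{j,k\}$. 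Comparing the $e_i$-coordinates on both sides of the $\Omega$-Jacobi identity $J(e_i,e_j,e_k) = \Omega(e_i,e_j)e_k + \Omega(e_k,e_i)e_j + \Omega(e_j,e_k)e_i$ yields $\Omega(e_j,e_k) \in K$; together with $\Omega(e_i,e_i) = 0$ this shows that $\Omega$ takes values in $K$ on all pairs of basis vectors. Hence $\Omega$ is the scalar extension $\omega \otimes_K F$ of its restriction $\omega := \Omega|_{L\times L}$, a $K$-bilinear (necessarily skew-symmetric) form on $L$.

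It then remains to observe that $L$ itself satisfies (\ref{jacobi}) with this $\omega$: for any $x,y,z \in L$ the element $J(x,y,z) - \omega(x,y)z - \omega(z,x)y - \omega(y,z)x$ lies in $L$ and, viewed inside $L \otimes_K F$, equals the corresponding $\Omega$-Jacobian expression, which vanishes; since $L \hookrightarrow L\otimes_K F$, it vanishes already in $L$. The only place where anything could go wrong is the descent of $\Omega$ to $K$, and the coordinate computation above shows this is automatic once $\dim_K L \ge 3$, so there is no real obstacle --- which is why the proposition is elementary. (Alternatively, one may argue functorially: being an $\omega$-Lie algebra amounts to the Jacobian, regarded as an element of the space of alternating trilinear maps $L\times L\times L \to L$, lying in the image of the linear map sending a skew form $\omega$ to $(x,y,z) \mapsto \omega(x,y)z + \omega(z,x)y + \omega(y,z)x$; since forming this image commutes with the flat base change $-\otimes_K F$, and membership of a vector in a subspace is detected after a field extension, the equivalence follows at once.)
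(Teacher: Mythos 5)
Your proof is correct and follows essentially the same route as the paper's: apply the $\Omega$-Jacobi identity to linearly independent elements of $L\otimes 1$, note the left-hand side has coordinates in $K$, and conclude that $\Omega$ restricts to a $K$-valued form on $L$ whose restriction inherits the identity. The only differences are cosmetic (you work in an explicit basis and justify the $\dim L\le 2$ case via the vanishing of alternating trilinear maps, where the paper simply cites that low-dimensional $\omega$-Lie algebras are Lie algebras).
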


\begin{proof}
The ``only if'' part is obvious:
$L\otimes_K F$ is an $\Omega$-Lie algebra where $\Omega$ is a bilinear form
on $L\otimes_K F$ extended from $\omega$ by linearity. 

To see the validity of the ``if'' part, note that if $\dim L \le 2$, the statement is 
trivially true
(both $L$ and $L\otimes_K F$ are Lie algebras and both $\omega$ and $\Omega$ can be 
chosen arbitrary),
and assume $\dim L \ge 3$. Take any linearly independent elements $x, y, z\in L$, 
and apply the $\omega$-Jacobi identity to the triple 
$x\otimes 1, y\otimes 1, z\otimes 1 \in L\otimes_F K$.
Then the left-hand side of the $\omega$-Jacobi identity lies in $L\otimes 1$, hence 
all coefficients on the right-hand side
belong to $K$. Hence $\Omega(L\otimes 1, L\otimes 1) \subseteq K$, and we may take $\omega$
to be a restriction of $\Omega$ to $L\otimes 1$.
\end{proof}

\section{Modules}\label{coho}

Let $L$ be an $\omega$-Lie algebra. Consider a vector space $M$ over $K$ and a linear 
homomorphism $\varphi: L \to End(M)$. It is natural to assume that $M$ is an $L$-module,
if the semidirect
product $L \oplus M$, with multiplication extended from $L$ by $[x,m] = \varphi(x)m$, 
$x\in L$, $m\in M$, and $[M,M] = 0$, and a skew-symmetric bilinear form $\Omega$ extended
from $L$, is an $\Omega$-Lie algebra. One immediately sees that, provided
$\dim L \ge 2$, this is the case if and only if
\begin{equation}\label{rep}
\varphi([x,y])m = \varphi(x)\varphi(y)m - \varphi(y)\varphi(x)m + \omega(x,y)m
\end{equation}
for any $x,y\in L$, $m\in M$, and $\Omega$ that trivially extends $\omega$: 
$M \subseteq Ker\,\Omega$.

This suggests the following

\begin{definition}
A vector space $M$ is called a \textit{module over an $\omega$-Lie algebra} $L$, if
there exists a homomorphism $\varphi: L \to End(M)$ such that {\rm (\ref{rep})} holds.
\end{definition}

Note that the very existence of a module over an $\omega$-Lie algebra could impose severe
restrictions on it. For example, consider the case of a $1$-dimensional module $M = Km$. 
Then 
\begin{equation}\label{lambda}
\varphi(x) = \lambda(x)m
\end{equation}
for some linear form $\lambda: L\to K$, any two endomorphisms $\varphi(x)$, where 
$x\in L$, commute, and (\ref{rep}) reduces to
\begin{equation}\label{mult}
\omega(x,y) = \lambda([x,y]).
\end{equation}

This is an important case we will encounter below, so it deserves a special
\begin{definition}
An $\omega$-Lie algebra is called \textit{multiplicative} if there is a linear form $\lambda: L \to K$
such that {\rm (\ref{mult})} holds, i.e.,
$$
[[x,y],z] + [[z,x],y] + [[y,z],x] = \lambda([x,y]) z + \lambda([z,x]) y + \lambda([y,z]) x
$$
for any $x,y,z \in L$.
\end{definition}

So, the previous observation could be rephrased as
\begin{lemma}
An $\omega$-Lie algebra $L$ has a $1$-dimensional module if and only if $L$ is multiplicative,
in which case the module structure is given by {\rm (\ref{lambda})}.
\end{lemma}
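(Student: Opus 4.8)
The plan is to unwind the definitions and see that the two conditions are literally the same equation once one knows the precise form of a $1$-dimensional module. First I would assume $L$ has a $1$-dimensional module $M = Km$, with module map $\varphi\colon L\to \mathrm{End}(M)$. Since $\mathrm{End}(Km)\cong K$ acts by scalars, there is a unique linear form $\lambda\colon L\to K$ with $\varphi(x)m = \lambda(x)m$ for all $x\in L$; this is exactly (\ref{lambda}). Plugging this into the module axiom (\ref{rep}) and using that the scalars $\lambda(x),\lambda(y)$ commute, the quadratic terms $\varphi(x)\varphi(y)m-\varphi(y)\varphi(x)m$ cancel, leaving $\lambda([x,y])m = \omega(x,y)m$, i.e.\ (\ref{mult}). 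Hence $L$ is multiplicative with this particular $\lambda$.

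Conversely, if $L$ is multiplicative with linear form $\lambda$, I would define $M = Km$ and $\varphi(x)m := \lambda(x)m$. Then $\varphi$ is a linear map $L\to\mathrm{End}(M)$, and since $\mathrm{End}(M)$ is commutative the right-hand side of (\ref{rep}) is $\omega(x,y)m$, while the left-hand side is $\lambda([x,y])m$; these agree precisely by (\ref{mult}). So (\ref{rep}) holds and $M$ is a module over $L$ in the sense of the Definition above. In both directions the module structure is the one displayed in (\ref{lambda}), which gives the last clause of the lemma.

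One small caveat I would address: the derivation of (\ref{rep}) as the defining condition for a module was justified in the text only ``provided $\dim L\ge 2$'', so strictly speaking the statement should be read with that tacit hypothesis (for $\dim L\le 1$ everything is degenerate and both sides of the claimed equivalence are automatic anyway). Beyond noting this, there is essentially no obstacle: the entire content is that on a one-dimensional space all endomorphisms commute, so the commutator term in (\ref{rep}) vanishes and the module axiom collapses exactly to the multiplicativity identity (\ref{mult}). The ``hard part,'' such as it is, is purely bookkeeping — making sure the same $\lambda$ transports between the two formulations — and there is nothing computationally substantial to grind through.
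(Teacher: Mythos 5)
Your proof is correct and is essentially identical to the paper's own argument: the paper presents the lemma as a rephrasing of the immediately preceding observation that on a one-dimensional module the endomorphisms $\varphi(x)$ commute, so the commutator term in (\ref{rep}) vanishes and the condition collapses to (\ref{mult}). Your remark about the tacit hypothesis $\dim L \ge 2$ is a fair (and harmless) point of care that the paper leaves implicit.
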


Note that, unless $L$ is a Lie algebra, $L$ is not a module over itself under the
adjoint action.

As in the case of Lie algebras, we may consider extensions of an $\omega$-Lie algebra $L$
by means of an $L$-module $M$:
\begin{equation}\label{ext-1}
0 \to M \to E \to L \to 0
\end{equation}
where $M$ is considered as an abelian algebra, and $\omega$ is extended from $L$ to $E$ trivially
by putting $\omega(M,L) = 0$.
In what follows, we will need only the following case which we distinguish by the 
following 
\begin{definition}
An \textit{abelian extension} of an $\omega$-Lie algebra $L$ is an extension of $L$ by the 
direct sum of several copies of a $1$-dimensional $L$-module\footnote[2]{
Note that this definition does not match the case of Lie algebras,
where any extension of type (\ref{ext-1}) is called \textit{abelian}. The closest case in Lie algebras
would be \textit{central extensions}, but the term \textit{central} is obviously inappropriate here
as a $1$-dimensional module is necessarily non-trivial in the non-Lie case. 
I was not imaginative enough to devise a new
term. As we consider in this paper 
only extensions of $\omega$-Lie algebras which are not Lie algebras, this should
not lead to confusion.}. 
\end{definition}

Given an $\omega$-Lie algebra and an $L$-module $M$, one may define cohomology groups
$H^n(L,M)$ precisely by the same formula for the differential as for ordinary Lie algebras. 
Direct, but tedious calculation shows that the square of the differential is zero, 
so this cohomology is well-defined.
As in the case of Lie algebras, direct verification shows that $H^2(L,M)$ describes 
nonequivalent classes of extensions of kind (\ref{ext-1}), and, consequently,
abelian extensions of $L$ are described 
by the direct sum of an appropriate number of copies of $H^2(L,K)$, where $K$ is 
understood as a $1$-dimensional $L$-module.

\section{Ideals}\label{section-ideals}

In this section we show that ideals of non-Lie $\omega$-Lie algebras
are either ``large'', or have a very simple structure.

\begin{lemma}\label{ideals}
Let $I$ be a proper ideal of an $\omega$-Lie algebra $L$.
Then $\omega(I,I) = 0$. If, additionally, $I$ is of codimension $>1$, 
then $I \subseteq Ker\,\omega$.
\end{lemma}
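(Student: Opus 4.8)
The statement has two parts, and I expect the first to be the crux. For the first part, I want to show $\omega(u,v)=0$ for all $u,v\in I$. The natural move is to feed elements of $I$ into the $\omega$-Jacobi identity \eqref{jacobi} together with an element $z\in L\setminus I$. Writing \eqref{jacobi} with $x=u$, $y=v$ both in $I$ and $z$ arbitrary in $L$, the left-hand side lies in $I$ (each bracket $[[u,v],z]$ etc.\ is in $I$ since $I$ is an ideal), and the right-hand side is $\omega(u,v)z + \omega(z,u)v + \omega(v,u)u$. Since $\omega(z,u)v + \omega(v,u)u \in I$, we get $\omega(u,v)z \in I$ for every $z\in L$. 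As $I$ is proper, pick $z\notin I$; then $\omega(u,v)z\in I$ forces $\omega(u,v)=0$. This already gives $\omega(I,I)=0$.

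**The codimension $>1$ case.** Now I want $\omega(I,L)=0$, i.e.\ $I\subseteq\operatorname{Ker}\omega$. Fix $u\in I$ and an arbitrary $w\in L$; I must show $\omega(u,w)=0$. The idea is again to use \eqref{jacobi} with two slots controlled so the left side lands in $I$. Take $x=u\in I$, and $y,z\in L$ arbitrary: the left-hand side $[[u,y],z]+[[z,u],y]+[[y,z],u]$ — the first two terms are in $I$, but $[[y,z],u]$ need not be, so this is not immediately in $I$. Instead take $x=u\in I$, $y=u'\in I$, $z=w\in L$: then the left side is in $I$ (it was the previous computation) and the right side is $\omega(u,u')w + \omega(w,u)u' + \omega(u',w)u$; we already know $\omega(u,u')=0$, so $\omega(w,u)u' + \omega(u',w)u\in I$, which is automatic and gives nothing new. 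So two elements of $I$ is not enough leverage; I need to exploit $\operatorname{codim} I > 1$ directly. The right approach: since $\operatorname{codim} I \geq 2$, for any $u\in I$ and $w\in L$ I can find $z\in L$ such that $z\notin I$ and $w\notin I + Kz$ — more precisely, pick $z_1,z_2\in L$ whose images in $L/I$ are linearly independent. Apply \eqref{jacobi} with $x=u\in I$, $y=z_1$, $z=z_2$: the left-hand side is $[[u,z_1],z_2] + [[z_2,u],z_1] + [[z_1,z_2],u]$. The first two terms lie in $I$; the third, $[[z_1,z_2],u] = -[u,[z_1,z_2]]\in I$ as well since $I$ is an ideal. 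So the \emph{entire} left side is in $I$, hence $\omega(u,z_1)z_2 + \omega(z_2,u)z_1 + \omega(z_1,z_2)u \in I$. Reducing modulo $I$: $\omega(u,z_1)\bar z_2 + \omega(z_2,u)\bar z_1 = 0$ in $L/I$, and since $\bar z_1,\bar z_2$ are linearly independent, $\omega(u,z_1) = \omega(u,z_2) = 0$.

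**Finishing.** This shows $\omega(u,z)=0$ whenever $\bar z\neq 0$ in $L/I$ and there is a \emph{second} independent class available, which holds precisely because $\operatorname{codim} I\geq 2$: given any $z\in L$ with $\bar z\neq 0$, extend $\bar z$ to a pair of independent vectors in $L/I$. So $\omega(u,z)=0$ for all $z\notin I$. Combined with $\omega(I,I)=0$ from the first part, we get $\omega(u,L)=0$, i.e.\ $u\in\operatorname{Ker}\omega$. Since $u\in I$ was arbitrary, $I\subseteq\operatorname{Ker}\omega$.

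**Main obstacle.** The only subtlety is the bookkeeping of which terms in the Jacobiator genuinely land inside $I$ — the key observation is that when at least one argument of the outer bracket is in $I$, \emph{and} $I$ is an ideal, every term $[[\cdot,\cdot],\cdot]$ after cyclic permutation has an inner or outer factor in $I$, so the whole left-hand side is in $I$; then one reads off constraints by passing to $L/I$, and the dimension hypothesis on $L/I$ is exactly what turns "a linear combination of independent vectors is zero" into "each coefficient vanishes." I do not anticipate any deeper difficulty; no field characteristic assumptions beyond the standing ones are needed.
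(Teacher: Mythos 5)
Your proof is correct and follows essentially the same route as the paper: apply the $\omega$-Jacobi identity with two arguments in $I$ to get $\omega(I,I)=0$, then with one argument in $I$ and read off the remaining coefficients modulo $I$, using $\operatorname{codim} I>1$ to separate them. (There is a harmless typo in your first display --- the last term should be $\omega(v,z)u$ rather than $\omega(v,u)u$ --- but it is still a multiple of an element of $I$, so the argument is unaffected.)
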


\begin{proof}
Apply the $\omega$-Jacobi identity to $x, y\in I$ and $z \notin I$, $z \ne 0$. 
All the terms on the left-hand side belong to $I$, and the terms 
$\omega(z,x)y$ and $\omega(y,z)x$ on the right-hand side also belong to $I$. Hence the
remaining term $\omega(x,y)z$ belongs to $I$. Hence, $\omega(x,y) = 0$ for any $x,y \in I$.

Now look again at the $\omega$-Jacobi identity with $x \in I$. All the terms on the 
left-hand side still
belong to $I$, as well as the term $\omega(y,z)x$ on the right-hand side. Hence,
$\omega(x,y)z - \omega(x,z)y \in I$ for any $y,z \in L$.
If codimension of $I$ is $>1$, this obviously implies $\omega (I,V) = 0$
for any subspace $V$ of $L$ complementary to $I$. Together with $\omega (I,I) = 0$,
this implies $\omega(I,L) = 0$.
\end{proof}

\begin{corollary}\label{ideal} 
A proper ideal of an $\omega$-Lie algebra is a Lie algebra. 
\end{corollary}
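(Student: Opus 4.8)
The plan is to deduce this directly from Lemma~\ref{ideals}. The only thing to observe is that the $\omega$-Jacobi identity, when all three arguments are taken inside the ideal $I$, has a right-hand side built entirely from values $\omega(x,y)$, $\omega(z,x)$, $\omega(y,z)$ with $x,y,z \in I$. So if $\omega$ vanishes on $I\times I$, this right-hand side is zero and the identity collapses to the ordinary Jacobi identity on $I$.

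Concretely, I would argue as follows. Let $I$ be a proper ideal of an $\omega$-Lie algebra $L$. By Lemma~\ref{ideals} we have $\omega(I,I)=0$. Since $I$ is a subalgebra of $L$, it is anticommutative and, for any $x,y,z\in I$, satisfies
$$
[[x,y],z] + [[z,x],y] + [[y,z],x] = \omega(x,y) z + \omega(z,x) y + \omega(y,z) x = 0,
$$
the last equality because $x,y,z\in I$ and $\omega$ restricted to $I\times I$ is the zero form. Hence $I$ satisfies the ordinary Jacobi identity, i.e.\ $I$ is a Lie algebra.

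There is essentially no obstacle here: the entire content has already been extracted in Lemma~\ref{ideals}, and this corollary is just the reformulation ``$\omega|_{I\times I}=0$ means the $\omega$-Jacobi identity on $I$ is the Jacobi identity''. One might add a remark that the hypothesis of properness is genuinely used --- it is exactly what allows the application of Lemma~\ref{ideals} (which crucially needs an element $z\notin I$) --- so the statement says nothing about $L$ itself, consistently with the existence of non-Lie $\omega$-Lie algebras.
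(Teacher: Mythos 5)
Your proof is correct and is exactly the argument the paper intends: the corollary is stated without proof precisely because, once Lemma~\ref{ideals} gives $\omega(I,I)=0$, the $\omega$-Jacobi identity restricted to triples in $I$ collapses to the ordinary Jacobi identity. Your remark about where properness enters is also accurate.
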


Note that condition $I \subseteq Ker\,\omega$ ensures that one can define an induced form $\omega$
on the quotient space $L/I$, which obviously satisfies the $\omega$-Jacobi identity, so 
a quotient of an $\omega$-Lie algebra by an ideal of codimension $>1$
is an $\omega$-Lie algebra.

Lemma \ref{ideals} suggests to consider the cases of ideals of codimension $1$
and of codimension $>1$ separately. The former case will be considered in 
\S \ref{sec-der}.

We continue with the following Lemma, which, together with the $\omega$-Jacobi identity,
will be our main tool in deriving properties of $\omega$-Lie algebras.

\begin{lemma}\label{4-var}
Let $L$ be an $\omega$-Lie algebra. Then, for any $x,y,z,t \in L$, the following holds:
\begin{multline}\label{two-basic}
  \omega(z,t)[x,y] + \omega(t,y)[x,z] + \omega(y,z)[x,t] + \omega(x,t)[y,z] + \omega(z,x)[y,t] 
+ \omega(x,y)[z,t] \\ =
d\omega(t,z,y)x + d\omega(z,t,x)y + d\omega(y,x,t)z + d\omega(x,y,z)t,
\end{multline}
where $d\omega(x,y,z) = \omega([x,y],z) + \omega([z,x],y) + \omega([y,z],x)$.
\end{lemma}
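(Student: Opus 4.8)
The plan is to derive (\ref{two-basic}) by a double application of the $\omega$-Jacobi identity, exploiting the fact that the right-hand side of (\ref{jacobi}) is ``close to'' a derivation-like expression. First I would write down, for fixed $x$, the quantity $[[x,y],z] + \text{cyclic}$ expanded via (\ref{jacobi}); the key observation is that $d\omega(x,y,z)$ is precisely the obstruction to $\omega$ being invariant, i.e. it measures $\omega([x,y],z) + \omega(y,[x,z])$ summed appropriately. So the natural route is: apply $\operatorname{ad} t$ (right multiplication by $t$) to the $\omega$-Jacobi identity in $x,y,z$, then apply the $\omega$-Jacobi identity again to each of the resulting triple commutators $[[[x,y],z],t]$, and symmetrize.

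More concretely, the key steps in order are as follows. (1) Start from the $\omega$-Jacobi identity for $x,y,z$ and bracket both sides on the right with $t$. The right-hand side becomes $\omega(x,y)[z,t] + \omega(z,x)[y,t] + \omega(y,z)[x,t]$, which already contributes three of the six commutator terms on the left of (\ref{two-basic}). The left-hand side is $[[[x,y],z],t] + [[[z,x],y],t] + [[[y,z],x],t]$. (2) For each of these three ``length-four'' terms, apply the $\omega$-Jacobi identity to reorganize; e.g. $[[[x,y],z],t]$ gets rewritten using (\ref{jacobi}) applied to the triple $([x,y],z,t)$, producing terms $[[t,[x,y]],z] + [[z,t],[x,y]]$ plus the $\omega$-terms $\omega([x,y],z)t + \omega(t,[x,y])z + \omega(z,t)[x,y]$. (3) Sum over the three cyclic versions. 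The ``pure commutator'' terms of the form $[[t,[x,y]],z]$ and their cyclic mates will cancel or telescope — here one uses the $\omega$-Jacobi identity once more, now on triples such as $(t,[x,y],z)$, to see that their total is expressible back in terms of shorter brackets and $\omega$-terms, and a careful bookkeeping shows the genuinely ``degree-three in brackets'' contributions all cancel against what one gets from the other cyclic permutations. (4) Collect the surviving $\omega$-coefficient terms: the terms $\omega(z,t)[x,y]$ and cyclic ones assemble into the remaining three commutator terms on the left of (\ref{two-basic}), while the terms with a single free vector, $\omega([x,y],z)t$, $\omega(t,[x,y])z$, etc., after using $\omega([x,y],z) = -\omega(z,[x,y])$ and regrouping by which of $x,y,z,t$ appears free, assemble exactly into $d\omega(t,z,y)x + d\omega(z,t,x)y + d\omega(y,x,t)z + d\omega(x,y,z)t$.

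The main obstacle I expect is the combinatorial bookkeeping in steps (2)–(3): each of the three length-four brackets expands into three terms, several of which are themselves brackets of brackets in a non-standard order, so one must repeatedly normalize using anticommutativity and the $\omega$-Jacobi identity, and track the signs and the four different free-vector positions simultaneously. The cleanest way to control this is to work ``modulo the span of $\{x,y,z,t\}$'': reduce both sides to an identity among honest commutators $[a,b]$ with $a,b \in \{x,y,z,t\}$ together with scalar multiples of $x,y,z,t$, and check the commutator part and the ``free vector'' part separately. The commutator part is a finite linear-algebra identity in the six brackets $[x,y],[x,z],[x,t],[y,z],[y,t],[z,t]$ with coefficients that are $\omega$-values, and it should reduce to a tautology once the length-four brackets are normalized; the free-vector part is where one recognizes the $d\omega$ expressions. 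One could alternatively obtain this more slickly by noting that $d\omega$ is a $3$-cochain and (\ref{two-basic}) is essentially the statement that a certain $4$-linear expression built from $\omega$-Jacobi vanishes, but the direct double-application route above is the most transparent and is what I would write up, being careful not to divide by $2$ or $3$ anywhere unnecessarily (the characteristic hypothesis is presumably not even needed for this particular lemma).
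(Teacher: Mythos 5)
Your overall strategy --- bracket the $\omega$-Jacobi identity for $x,y,z$ with $t$, then re-expand the resulting quadruple brackets by applying the $\omega$-Jacobi identity to triples with one bracketed entry --- is the same as the paper's, but the cancellation scheme you describe in steps (2)--(3) does not close as stated, and that scheme is the entire content of the proof. Expanding $[[[x,y],z],t]$ via the identity for the triple $([x,y],z,t)$ and summing over the cyclic permutations of $(x,y,z)$ leaves \emph{two} families of uncancelled higher terms: the brackets of brackets $[[x,y],[z,t]]+[[z,x],[y,t]]+[[y,z],[x,t]]$ and the new quadruple brackets $[[[x,y],t],z]+[[[z,x],t],y]+[[[y,z],t],x]$. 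Neither family cancels within the cyclic sum over $(x,y,z)$, and applying the identity to $(t,[x,y],z)$, as you suggest, is just a cyclic rewriting of the triple $([x,y],z,t)$ you already used, so it yields nothing new. Likewise, at this stage the scalar terms are only $d\omega(x,y,z)t$ together with the three \emph{single} summands $\omega(t,[x,y])z$, $\omega(t,[z,x])y$, $\omega(t,[y,z])x$; they do not yet assemble into the full $d\omega(t,z,y)x+d\omega(z,t,x)y+d\omega(y,x,t)z$.

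What is missing is a second symmetrization that puts $t$ on the same footing as two of the other variables. The paper's route: add the $\omega$-Jacobi identities for $(x,y,[z,t])$ and for $([x,y],z,t)$ so that the term $[[x,y],[z,t]]$ cancels outright, subtract the bracketed identity for $(x,y,z)$, then cyclically permute $x,y,t$ (not $x,y,z$) and sum the three results; the surviving quadruple brackets collapse to $-[J(x,y,t),z]$ with $J$ the Jacobian, and one final application of the $\omega$-Jacobi identity to $(x,y,t)$ converts this into the remaining $\omega$-coefficient commutator terms while the scalar terms complete to the four $d\omega$'s. Your plan would have to discover this (or an equivalent) combination; as written, the assertion that careful bookkeeping makes the degree-three-in-brackets contributions cancel papers over the one nontrivial step. (You are right, incidentally, that no division occurs anywhere, so the characteristic hypothesis plays no role in this lemma.)
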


\begin{proof}
Write the $\omega$-Jacobi identity for triples $x, y, [z,t]$ and $[x,y], z, t$:
\begin{align*}
[[x,y],[z,t]] + [[[z,t],x],y] - [[[z,t],y],x] &= \omega(x,y)[z,t] + \omega([z,t],x)y + \omega(y,[z,t])x \\
[[[x,y],z],t] - [[[x,y],t],z] - [[x,y],[z,t]] &= \omega([x,y],z)t + \omega(t,[x,y])z + \omega(z,t)[x,y]
\end{align*}
and sum the two equalities obtained:
\begin{multline}\label{sum}
[[[z,t],x],y] - [[[z,t],y],x] + [[[x,y],z],t] - [[[x,y],t],z] \\ =
  \omega(x,y)[z,t] + \omega([z,t],x)y + \omega(y,[z,t])x + \omega([x,y],z)t 
+ \omega(t,[x,y])z + \omega(z,t)[x,y] .
\end{multline}
Multiply the $\omega$-Jacobi identity for $x,y,z$ by $t$:
\begin{equation}\label{byt}
[[[x,y],z],t] + [[[z,x],y],t] + [[[y,z],x],t] = 
\omega(x,y)[z,t] + \omega(z,x)[y,t] + \omega(y,z)[x,t].
\end{equation}
Subtract (\ref{byt}) from (\ref{sum}):
\begin{multline*}
[[[z,t],x],y] - [[[z,t],y],x] - [[[x,y],t],z] - [[[z,x],y],t] - [[[y,z],x],t] \\ =
  \omega([z,t],x)y + \omega(y,[z,t])x + \omega([x,y],z)t + \omega(t,[x,y])z \\
+ \omega(z,t)[x,y] - \omega(z,x)[y,t] - \omega(y,z)[x,t].
\end{multline*}
Perform cyclic permutations of $x,y,t$ in the last equality and sum the three equalities
so obtained:
\begin{multline*}
- [[[x,y],t] + [[t,x],y] + [[y,t],x], z] \\ = 
  d\omega(t,z,y)x + d\omega(z,t,x)y + d\omega(y,x,t)z + d\omega(x,y,z)t   \\
- \omega(y,z)[x,t] - \omega(z,t)[x,y] - \omega(x,z)[t,y].
\end{multline*}
Combining the last equality with the $\omega$-Jacobi identity for $x,y,t$,
we get the equality desired.
\end{proof}

An alternative way to derive the identity (\ref{two-basic}), based on superalgebras, is 
outlined in \S \ref{ident}.

We need also the following auxiliary technical

\begin{lemma}\label{aux}
Let $L$ be an $\omega$-Lie algebra and $I$ a nonzero linear subspace of $Ker\,\omega$ such that
$$
[\omega(y,z)x + \omega(z,x)y + \omega(x,y)z, h] \in Kh  
$$
for any $x,y,z\in L$, $h\in I$. Then one of the following holds:
\begin{enumerate}
\item $L$ is multiplicative and $I$ is an abelian ideal of $L$ which, as an $L/I$-module,
      is isomorphic to the direct sum of $1$-dimensional modules.
\item $I$ is contained in a Lie subalgebra of $L$ of codimension $1$.
\item $Ker\,\omega$ is a Lie subalgebra of $L$ of codimension $2$,
      $Ker\,\omega = \set{x\in L}{[x,h] \in Kh}$ for some $h\in I$,
      and $[[Ker\,\omega, Ker\,\omega],h] = 0$.
\item $L$ is a Lie algebra. 
\end{enumerate}
\end{lemma}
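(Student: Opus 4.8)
The plan is to exploit the hypothesis $[\omega(y,z)x+\omega(z,x)y+\omega(x,y)z,h]\in Kh$ for all $x,y,z\in L$ and $h\in I$, together with Lemma \ref{4-var}. First I would observe that if $\omega=0$ we are in case (iv), so assume $\omega\neq 0$; then $\operatorname{Ker}\omega$ has codimension at least $2$ (a skew-symmetric form has even rank, so rank $\geq 2$), and I would try to pin down exactly codimension $2$ versus higher. Fix a nonzero $h\in I\subseteq\operatorname{Ker}\omega$ and consider the linear map $\operatorname{ad} h:L\to L$; the hypothesis says that the subspace $S=\{\omega(y,z)x+\omega(z,x)y+\omega(x,y)z\mid x,y,z\in L\}$ is mapped into $Kh$ by $\operatorname{ad} h$. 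The key computation is to identify $S$: if $\operatorname{rk}\omega\geq 4$ then $S=L$, while if $\operatorname{rk}\omega=2$ then $S=W\oplus\operatorname{Ker}\omega$ where $W$ is the $2$-dimensional non-degenerate part — I'd verify this by choosing a symplectic-type basis $e_1,e_2$ of a complement to $\operatorname{Ker}\omega$ with $\omega(e_1,e_2)=1$ and computing the elements $\omega(y,z)x+\dots$ explicitly.

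Next, in the case $\operatorname{rk}\omega\geq 4$: then $\operatorname{ad} h$ maps all of $L$ into $Kh$, i.e. $[L,h]\subseteq Kh$. This means $h$ spans a $1$-dimensional ideal; but Lemma \ref{ideals}/Corollary \ref{ideal} force $Kh$ (being a proper ideal of codimension $>1$) into $\operatorname{Ker}\omega$ and make it a Lie algebra, which is automatic. The real content is to show $L$ must be multiplicative and $I$ an abelian ideal that is a sum of $1$-dimensional $L/I$-modules — so I'd argue that $[L,h]\subseteq Kh$ for every $h$ in $I$ forces $I$ to be an ideal; abelianness follows since $[h,h']\in Kh\cap Kh'=0$ for independent $h,h'\in I$ (using $I\subseteq\operatorname{Ker}\omega$ so the $\omega$-terms vanish and the ordinary Jacobiator governs $[[h,h'],\cdot]$). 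To get multiplicativity I would feed $x,y\in I$ into the $\omega$-Jacobi identity, or better use that a $1$-dimensional module exists (each line $Kh$ with the $L$-action by $\operatorname{ad}$) and invoke the Lemma characterizing multiplicative $\omega$-Lie algebras via $1$-dimensional modules — equation (\ref{mult}) then says $\omega(x,y)=\lambda([x,y])$ with $\lambda$ read off from the action of $L$ on $Kh$. This gives case (i).

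Then, in the case $\operatorname{rk}\omega=2$, so $\operatorname{Ker}\omega=:K_0$ has codimension $2$: now $\operatorname{ad} h$ sends $W\oplus K_0$ (which I claimed equals $S$) into $Kh$. Split into subcases according to whether $[K_0,K_0]\subseteq K_0$, i.e. whether $K_0$ is a subalgebra. The hypothesis restricted to $x,y,z\in K_0$ is vacuous (all $\omega$'s vanish), so I must use triples involving the complement. Applying Lemma \ref{4-var} with $x,y,z\in L$ and $t=h\in I$: since $d\omega(\cdot,\cdot,\cdot)$ is just the Jacobiator of $\omega$ which vanishes whenever one argument is in $\operatorname{Ker}\omega$ — wait, not quite, $d\omega(x,y,h)=\omega([x,y],h)+\omega([h,x],y)+\omega([y,h],x)$, and the first term is $0$ while the others need not be — I'd carefully expand (\ref{two-basic}) with $t=h$ to get a relation of the form (combination of brackets with $h$) $=$ (combination of $\omega$-values times elements). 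Combined with $[S,h]\subseteq Kh$, this should force either $h$ generating a codimension-$1$ Lie subalgebra containing $I$ (case (ii)) or pin down $\operatorname{Ker}\omega=\{x:[x,h]\in Kh\}$ as a Lie subalgebra with $[[\operatorname{Ker}\omega,\operatorname{Ker}\omega],h]=0$ (case (iii)).

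The main obstacle I anticipate is the bookkeeping in this last $\operatorname{rk}\omega=2$ case: disentangling how $\operatorname{ad} h$ acts on $W$ versus on $K_0$, and extracting from (\ref{two-basic}) with $t=h$ exactly the dichotomy between (ii) and (iii) — in particular showing that when $[K_0,K_0]\not\subseteq K_0$ one is forced into the codimension-$1$ subalgebra scenario, and conversely when $K_0$ is a subalgebra deducing both $[[\operatorname{Ker}\omega,\operatorname{Ker}\omega],h]=0$ and the characterization $\operatorname{Ker}\omega=\{x:[x,h]\in Kh\}$. I would handle this by picking a basis $e_1,e_2$ dual (under $\omega$) to a complement, writing a general element as $\alpha e_1+\beta e_2+k$ with $k\in K_0$, and systematically reducing the $4$-variable identity; the structural conclusions should then fall out of a finite case analysis on the values $\omega(e_1,\cdot),\omega(e_2,\cdot)$ and on $[e_i,h]\bmod Kh$.
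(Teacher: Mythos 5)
Your first branch ($\operatorname{rank}\omega\ge 4$) is sound: there the span of the elements $\omega(y,z)x+\omega(z,x)y+\omega(x,y)z$ is indeed all of $L$, so the hypothesis gives $[L,h]\subseteq Kh$ for every $h\in I$, and case (i) follows essentially as you describe. The gap is in the rank-$2$ branch, and it begins with a wrong computation: when $\operatorname{rank}\omega=2$ the span $S$ of those elements is $Ker\,\omega$, \emph{not} $W\oplus Ker\,\omega=L$. (Check it on a basis: with $\omega(e_1,e_2)=1$ and $x=\alpha e_1+\beta e_2+k$, $k\in Ker\,\omega$, one gets $\omega(e_1,e_2)x+\omega(e_2,x)e_1+\omega(x,e_1)e_2=k$, while every wedge of basis vectors involving at most one of $e_1,e_2$ gives $0$.) Had $S$ really been $L$, you would always land in case (i) and cases (ii)--(iii) of the lemma would be vacuous --- a warning sign. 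So in the rank-$2$ case the hypothesis yields only $Ker\,\omega\subseteq N(h)$, where $N(h)=\set{x\in L}{[x,h]\in Kh}$, and your subsequent analysis, which assumes $\ad h$ maps all of $L$ into $Kh$, does not get off the ground; the remainder of that branch is in any event only a ``should force'' sketch.

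The missing idea that makes the rank-$2$ case work --- and which your outline never isolates --- is that $N(h)$ is a \emph{subalgebra} on which $[[x,y],h]=\omega(x,y)h$: this is just the $\omega$-Jacobi identity for $x,y\in N(h)$ and $h\in Ker\,\omega$, since the terms $[[h,x],y]$ and $[[y,h],x]$ cancel. Granting that, the rank-$2$ case is a clean trichotomy on $\codim N(h)$, which is at most $2$ because $Ker\,\omega\subseteq N(h)$: if $N(h)=L$ for all $h\in I$ you are back in case (i) by the same argument as your first branch (a possibility your rank-$2$ plan omits); if some $N(h)$ has codimension $1$, then $\omega$ vanishes on it (it is $Ker\,\omega$ plus one line), so it is a Lie subalgebra of codimension $1$ containing $I$, which is (ii); if some $N(h)$ has codimension $2$, then $N(h)=Ker\,\omega$ is a Lie subalgebra and $[[N(h),N(h)],h]=\omega(N(h),N(h))h=0$, which is exactly (iii). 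The paper reaches this trichotomy without splitting on the rank, by specializing the hypothesis $\omega(y,z)x+\omega(z,x)y+\omega(x,y)z\in N(h)$ successively at $z\in N(h)$ and then $y\in N(h)$ to conclude that either $N(h)=L$ or $\omega(N(h),N(h))=0$, and then bounding $\codim N(h)$; note also that Lemma \ref{4-var} plays no role inside this proof --- it is used only in the corollaries to verify the hypothesis of the present lemma, so your plan to ``reduce the $4$-variable identity'' here is aimed at the wrong tool.
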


\begin{proof}
Denote $N(h) = \set{x \in L}{[x,h] \in Kh}$.
Writing the $\omega$-Jacobi identity for $x,y\in N(h)$ and $h\in I$, we get:
\begin{equation}\label{yoyo-5}
[[x,y],h] = \omega(x,y)h.
\end{equation}
Hence $N(h)$ is a subalgebra of $L$ for any $h\in I$.

We have:
\begin{equation}\label{ins}
\omega(y,z)x + \omega(z,x)y + \omega(x,y)z \in N(h)
\end{equation}
for any $x,y,z\in L$ and $h\in I$.
Letting here $z\in N(h)$, we get
\begin{equation}\label{yoyo-6}
\omega(y,z)x + \omega(z,x)y \in N(h)
\end{equation}
for any $x,y\in L$, and letting further $y \in N(h)$, we get
$$
\omega(y,z)x \in N(h)
$$
for any $x\in L$. The last inclusion implies that either $N(h) = L$, 
or $\omega(N(h),N(h)) = 0$. 

If $N(h) = L$ for all $h \in I$, then $[x,h] = \lambda(x,h)h$
for any $x\in L$, $h\in I$ and some map $\lambda: L \times I \to K$. 
Obviously $I$ is an ideal of $L$. 
By linearity, $\lambda$ is linear in the first argument and constant in the second one,
so we may write $\lambda(x, \cdot) = \lambda(x)$.
By (\ref{yoyo-5}), $\omega(x,y) = \lambda([x,y])$ for any $x,y \in L$, 
so $L$ is multiplicative. As $0 = [h,h] = \lambda(h)h$ for any $h\in I$, we have
$\lambda(I) = 0$ and $I$ is abelian, so we are in case (i). 

Assume now there is $h\in I$ such that $\omega(N(h),N(h)) = 0$, so $N(h)$ is a proper 
Lie subalgebra of $L$. 
By (\ref{yoyo-6}), either $N(h)$ is of codimension $1$, or $\omega(N(h),L) = 0$.
Let $N(h)$ be of codimension $1$. If $I\subseteq N(h)$, we are in (ii).
If $I \nsubseteq N(h)$, then $L = N(h) + I$. But then $\omega(N(h),N(h)) = 0$ and 
$I\subseteq Ker\,\omega$ imply $\omega(L,L) = 0$, hence $L$ is a Lie algebra
and we are in case (iv).

If $\omega(N(h),L) = 0$, (\ref{ins}) implies that either $N(h)$ is of codimension $2$, 
or $\omega(L,L) = 0$, i.e. $L$ is a Lie algebra again.

So the only case remained to consider is when $N(h)$ is of codimension $2$ and lies in $Ker\,\omega$
for some $h\in I$. If $L$ is not a Lie algebra, i.e., $Ker\,\omega$ is proper, 
then $N(h) = Ker\,\omega$. By (\ref{yoyo-5}), $[[N(h),N(h)],h]$ = 0, and we are in case (ii).
\end{proof}

\begin{corollary}\label{ideals-codim-g1}
Let $L$ be an $\omega$-Lie algebra and $I$ a nonzero ideal of $L$ of codimension $>1$. 
Then the conclusion of Lemma {\rm \ref{aux}} holds.
\end{corollary}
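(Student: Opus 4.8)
The plan is to check that a nonzero ideal $I$ of codimension $>1$ automatically satisfies the hypotheses of Lemma \ref{aux}, and then just quote that lemma. Since codimension $>1$ forces $I$ to be a \emph{proper} ideal, Lemma \ref{ideals} already gives $I \subseteq Ker\,\omega$, and $I \ne 0$ by assumption. So the only thing left to verify is the bracket condition
\[
[\omega(y,z)x + \omega(z,x)y + \omega(x,y)z,\, h] \in Kh
\qquad\text{for all } x,y,z \in L,\ h \in I .
\]

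To get this, I would specialize the four-variable identity (\ref{two-basic}) of Lemma \ref{4-var} by setting $t = h \in I \subseteq Ker\,\omega$. On the left-hand side every coefficient of the form $\omega(\cdot,h)$ or $\omega(h,\cdot)$ vanishes, so the left-hand side collapses to $\omega(y,z)[x,h] + \omega(z,x)[y,h] + \omega(x,y)[z,h]$, which by bilinearity of the bracket is exactly $[\omega(y,z)x + \omega(z,x)y + \omega(x,y)z,\, h]$. On the right-hand side I expect three of the four $d\omega$-coefficients — namely $d\omega(h,z,y)$, $d\omega(z,h,x)$, $d\omega(y,x,h)$ — to vanish: in each of them one summand has the shape $\omega(\,[\,\cdot\,,h]\,,\cdot\,)$, which is $0$ because $[L,I]\subseteq I \subseteq Ker\,\omega$, another summand has the shape $\omega(\,\cdot\,,h)$, which is $0$ because $h \in Ker\,\omega$, and by the same two reasons the third one vanishes too. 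Hence (\ref{two-basic}) reduces to
\[
[\omega(y,z)x + \omega(z,x)y + \omega(x,y)z,\, h] = d\omega(x,y,z)\, h ,
\]
which lies in $Kh$. This is precisely the assumption of Lemma \ref{aux}, so applying that lemma to $I$ yields the four alternatives and completes the proof.

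I do not foresee a real obstacle: the corollary is essentially a translation of ``ideal of codimension $>1$'' into the abstract hypothesis of Lemma \ref{aux}. The only point that needs a little care is the bookkeeping on the right-hand side of (\ref{two-basic}) — confirming that each of the three unwanted $d\omega$-terms genuinely dies, which uses \emph{both} $I \subseteq Ker\,\omega$ and the ideal property $[L,I] \subseteq I$, not merely one of them.
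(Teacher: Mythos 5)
Your proof is correct and follows exactly the paper's own route: invoke Lemma \ref{ideals} to place $I$ inside $Ker\,\omega$, then specialize the four-variable identity (\ref{two-basic}) to $t=h\in I$ and check that the extra terms on both sides die (using both $h\in Ker\,\omega$ and $[L,I]\subseteq I$), reducing it to the hypothesis of Lemma \ref{aux}. The paper states this more tersely, but the bookkeeping you spell out is precisely what it relies on.
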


\begin{proof}
By Lemma \ref{ideals}, $I\subseteq Ker\,\omega$.
Write (\ref{two-basic}) for $x,y,z\in L$, $h\in I$:
\begin{equation*}
\omega(y,z)[x,h] + \omega(z,x)[y,h] + \omega(x,y)[z,h] = d\omega(x,y,z)h.
\end{equation*}
Thus Lemma \ref{aux} is applicable. 
\end{proof}

\section{Rudimentary root space decomposition}

We start this section with another application of Lemma \ref{4-var}.

\begin{lemma}\label{aux-1}
Let $L$ be an $\omega$-Lie algebra and $H$ an abelian subalgebra of 
$Ker\,\omega$ of dimension $>1$. Then:
\begin{enumerate}
\item $\omega([x,h],y) + \omega(x,[y,h]) = 0$
\item $[\omega(y,z)x + \omega(z,x)y + \omega(x,y)z,h] = d\omega(x,y,z)h$
\end{enumerate}
for any $x,y,z \in L$, $h\in H$.
\end{lemma}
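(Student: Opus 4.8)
The plan is to get (i) directly out of the four-variable identity \eqref{two-basic} of Lemma \ref{4-var}, and then obtain (ii) as a short consequence of (i). Throughout I would abbreviate $f(x,y,h) = \omega([x,h],y) + \omega(x,[y,h])$, the left-hand side of (i); note that by skew-symmetry of $\omega$ this expression is alternating in $x$ and $y$, i.e. $f(y,x,h) = -f(x,y,h)$, and recall that $d\omega$ is invariant under cyclic permutation of its three arguments.

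First I would prove (i). Since $\dim H > 1$, choose linearly independent $h_1, h_2 \in H$; because $H$ is abelian we have $[h_1,h_2] = 0$, and because $H \subseteq Ker\,\omega$ every value $\omega(\,\cdot\,, h_i)$ vanishes. Substitute $z = h_1$, $t = h_2$ into \eqref{two-basic}. On the left-hand side every term carries a factor $\omega(\,\cdot\,,h_1)$, $\omega(\,\cdot\,,h_2)$, or the bracket $[h_1,h_2]$, so the left-hand side vanishes identically. On the right-hand side the coefficients $d\omega(h_2,h_1,y)$ and $d\omega(h_1,h_2,x)$ vanish (each summand in the definition of $d\omega$ either involves $[h_1,h_2]$ or pairs some element with $h_1$ or $h_2$ under $\omega$), while $d\omega(y,x,h_2)$ and $d\omega(x,y,h_1)$ reduce, after discarding the terms $\omega([y,x],h_i) = 0$ and using skew-symmetry of $\omega$, to $f(x,y,h_2)$ and $-f(x,y,h_1)$ respectively. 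Thus \eqref{two-basic} collapses to $f(x,y,h_2)\,h_1 - f(x,y,h_1)\,h_2 = 0$, and linear independence of $h_1, h_2$ forces $f(x,y,h_1) = f(x,y,h_2) = 0$. Finally, any nonzero $h \in H$ extends to a linearly independent pair inside $H$ (again using $\dim H > 1$), so $f(x,y,h) = 0$ for all $h \in H$, which is (i).

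Next I would record the auxiliary fact that $d\omega(x,y,h) = 0$ for all $x,y \in L$ and $h \in H$: expanding the definition, the term $\omega([x,y],h)$ vanishes since $h \in Ker\,\omega$, and the two surviving terms sum to $-f(x,y,h)$, which is $0$ by (i). By cyclic invariance, $d\omega$ then vanishes whenever any one of its arguments lies in $H$. Now for (ii), substitute $t = h \in H$ into \eqref{two-basic}: on the left the three terms containing $\omega(\,\cdot\,,h)$ drop out, leaving $\omega(y,z)[x,h] + \omega(z,x)[y,h] + \omega(x,y)[z,h]$; on the right the coefficients $d\omega(h,z,y)$, $d\omega(z,h,x)$, $d\omega(y,x,h)$ all vanish by the previous sentence, leaving $d\omega(x,y,z)\,h$. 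Rewriting the left side as a single bracket yields exactly $[\omega(y,z)x + \omega(z,x)y + \omega(x,y)z, h] = d\omega(x,y,z)h$.

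The essential point — and the only place the hypothesis $\dim H > 1$ is used — is the decoupling step: one needs two linearly independent elements of $H$ both to separate $f(x,y,h_1)$ from $f(x,y,h_2)$ in the collapsed identity and to propagate the vanishing of $f$ to all of $H$. The remaining difficulty is purely clerical: carefully tracking the signs when reducing $d\omega(y,x,h_2)$ and $d\omega(x,y,h_1)$ to $\pm f(x,y,h_i)$ via the skew-symmetry of $\omega$; that is the one computation where a sign error would be easy to make.
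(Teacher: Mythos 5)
Your proof is correct and follows essentially the same route as the paper: both parts come from specializing the four-variable identity \eqref{two-basic}, first with two linearly independent elements of $H$ in the slots $z,t$ (yielding $f(x,y,h_2)h_1 - f(x,y,h_1)h_2 = 0$ and hence (i)), and then with a single $h\in H$ in the slot $t$, using (i) to kill the unwanted $d\omega$ coefficients. Your sign bookkeeping for $d\omega(y,x,h_2)$ and $d\omega(x,y,h_1)$ checks out, and the extra step of extending an arbitrary nonzero $h$ to an independent pair is a detail the paper leaves implicit.
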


\begin{proof}
Write (\ref{two-basic}) for $x,y\in L, h,h^\prime\in I$:
$$
(\omega([h^\prime,y],x) + \omega([x,h^\prime],y)) h + 
(\omega([h,x],y)        + \omega([y,h],x))        h^\prime = 0.
$$ 
Choosing $h$ and $h^\prime$ to be linearly independent, we arrive at case (i).

Now writing (\ref{two-basic}) for $x,y,z \in L$, $h\in I$, 
and taking into account (i), we arrive at case (ii).
\end{proof}

In particular, (ii) shows that Lemma \ref{aux} is applicable:

\begin{corollary}\label{ker}
Let $L$ be an $\omega$-Lie algebra and $I$ an abelian subalgebra of 
$Ker\,\omega$ of dimension $>1$. Then the conclusion of Lemma \ref{aux} holds.
\end{corollary}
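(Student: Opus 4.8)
\textbf{Proof plan for Corollary \ref{ker}.}
The plan is to simply verify that the hypotheses of Lemma \ref{aux} are satisfied, since the conclusion of that lemma is exactly what we want to conclude. Lemma \ref{aux} requires a nonzero subspace $I$ of $Ker\,\omega$ (which we have, by assumption) together with the condition
$$
[\omega(y,z)x + \omega(z,x)y + \omega(x,y)z, h] \in Kh
$$
for all $x,y,z\in L$ and $h\in I$. The point is that Lemma \ref{aux-1}(ii), applied with $H = I$ (legitimate since $I$ is an abelian subalgebra of $Ker\,\omega$ of dimension $>1$), tells us that the bracket on the left equals $d\omega(x,y,z)\,h$, which visibly lies in $Kh$. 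Hence the hypothesis of Lemma \ref{aux} holds and its conclusion follows.

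So the entire argument is one line once Lemma \ref{aux-1}(ii) is in hand; there is essentially no obstacle here, as all the real work was done in establishing Lemma \ref{4-var} and Lemma \ref{aux-1}. The only thing to double-check is the dimension hypothesis: Lemma \ref{aux-1} requires $\dim H > 1$, and the statement of the corollary carries exactly that assumption on $I$, so there is no gap. (Note that without $\dim I > 1$ one could not invoke the step in the proof of Lemma \ref{aux-1} where two linearly independent elements $h, h'$ of $I$ are chosen, so this hypothesis is genuinely used.)

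I would therefore write the proof as: ``By Lemma \ref{aux-1}(ii), $[\omega(y,z)x + \omega(z,x)y + \omega(x,y)z, h] = d\omega(x,y,z)h \in Kh$ for any $x,y,z\in L$, $h\in I$, so Lemma \ref{aux} is applicable.'' That is all that is needed.
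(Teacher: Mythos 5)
Your argument is correct and is exactly the paper's: the corollary is stated immediately after Lemma \ref{aux-1} with the remark that part (ii) shows the hypothesis of Lemma \ref{aux} is satisfied, which is precisely your one-line deduction. Your added note about where the $\dim I>1$ hypothesis is genuinely used is accurate but not part of the paper's (essentially nonexistent) proof.
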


We see that $Ker\,\omega$ in the $\omega$-Lie algebra satisfies, in general, 
quite restrictive conditions. However, to treat the cases (ii) and (iii) of Lemma \ref{aux}
in an uniform way, we continue to consider some generalities about
$Ker\,\omega$.

The following two Lemmata are analogs of the facts used in the proof of the 
well-known properties of root space decompositions in Lie algebras.
Not surprisingly, they feature very similar inductive proofs involving binomial coefficients.

\begin{lemma}
Let $L$ be an $\omega$-Lie algebra, $H$ an abelian Lie subalgebra of $Ker\,\omega$, and
$\dim H > 1$. Then
\begin{equation}\label{binom}
\sum_{i=0}^n \binom{n}{i} \omega \Big( (\ad(h) + \alpha)^{n-i}(x), (\ad(h) + \beta)^i(y) \Big) = 
(\alpha + \beta)^n \omega(x,y)
\end{equation}
for any $n\in \mathbb N$, $x,y\in L$, $h\in H$, $\alpha, \beta\in K$.
\end{lemma}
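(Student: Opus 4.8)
The plan is to prove (\ref{binom}) by induction on $n$, exactly as one does for the analogous statement about root spaces of Lie algebras, using the key structural fact from Lemma \ref{aux-1}(i): since $H$ is an abelian subalgebra of $Ker\,\omega$ of dimension $>1$, the form $\omega$ is ``$\ad(h)$-invariant'' in the sense that $\omega([x,h],y) + \omega(x,[y,h]) = 0$, i.e.\ $\omega(\ad(h)(x),y) = -\omega(x,\ad(h)(y))$ for every $h \in H$. Write $D = \ad(h)$ for a fixed $h \in H$; then $D$ is a skew operator with respect to $\omega$, and more generally $\omega((D+\alpha)(x),y) = \omega(x,(\beta - \ad(h) + (\alpha+\beta) - \beta)(y))$ — more cleanly, $\omega((D+\alpha)(x),y) + \omega(x,(D-\alpha)(y)) = (\text{skew part cancels}) = 0$ would be wrong; the correct bookkeeping is $\omega((D+\alpha)(x),y) = \alpha\,\omega(x,y) - \omega(x,D(y)) = \omega(x,(\alpha - D)(y))$. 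So the operator $D+\alpha$ on the left slot equals $\alpha - D$ on the right slot.

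First I would establish the base case $n=0$, which is the tautology $\omega(x,y)=\omega(x,y)$. For the inductive step, assume (\ref{binom}) holds for $n$ and all $x,y \in L$, $\alpha,\beta \in K$. Apply it with $x$ replaced by $(D+\alpha)(x)$ (and the same $y$, $\alpha$, $\beta$): each term $\omega((D+\alpha)^{n-i}(x),(D+\beta)^i(y))$ becomes $\omega((D+\alpha)^{n+1-i}(x),(D+\beta)^i(y))$. Separately, apply the induction hypothesis with $y$ replaced by $(D+\beta)(y)$: each term becomes $\omega((D+\alpha)^{n-i}(x),(D+\beta)^{i+1}(y))$, and reindexing shifts the binomial sum. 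Adding these two instances, using $\binom{n}{i}+\binom{n}{i-1}=\binom{n+1}{i}$ to collect the coefficient of $\omega((D+\alpha)^{n+1-i}(x),(D+\beta)^i(y))$, gives on the left exactly $\sum_{i=0}^{n+1}\binom{n+1}{i}\omega((D+\alpha)^{n+1-i}(x),(D+\beta)^i(y))$, while the right-hand side becomes $(\alpha+\beta)^n\big(\omega((D+\alpha)(x),y)+\omega(x,(D+\beta)(y))\big)$. It remains to simplify this last bracket: by the skew-invariance, $\omega((D+\alpha)(x),y) + \omega(x,(D+\beta)(y)) = \omega(D(x),y)+\alpha\,\omega(x,y)+\omega(x,D(y))+\beta\,\omega(x,y) = (\alpha+\beta)\,\omega(x,y)$, since $\omega(D(x),y)+\omega(x,D(y))=0$. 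Hence the right-hand side is $(\alpha+\beta)^{n+1}\omega(x,y)$, completing the induction.

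The only genuine point requiring care — and the place the hypothesis $\dim H > 1$ enters — is that Lemma \ref{aux-1}(i) is available at all; given that, the argument is the standard Pascal-triangle manipulation. I expect the main (mild) obstacle to be purely bookkeeping: making sure the two shifted copies of the induction hypothesis align correctly under reindexing so that the binomial coefficients combine via Pascal's rule, and tracking that the residual $\omega((D+\alpha)(x),y)+\omega(x,(D+\beta)(y))$ collapses to $(\alpha+\beta)\omega(x,y)$ using skew-invariance of $D=\ad(h)$. Since the statement is for a fixed but arbitrary $h \in H$, and $D$ depends only on that $h$, no further compatibility between different elements of $H$ is needed in this lemma.
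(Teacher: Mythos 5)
Your proposal is correct and follows essentially the same route as the paper: induction on $n$, applying the inductive hypothesis to the pairs $(\ad(h)+\alpha)x,\,y$ and $x,\,(\ad(h)+\beta)y$, combining via Pascal's rule, and collapsing the right-hand side using the skew-invariance $\omega([x,h],y)+\omega(x,[y,h])=0$ from Lemma \ref{aux-1}(i), which is indeed where the hypothesis $\dim H>1$ enters.
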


\begin{proof}
Induction on $n$. The case $n=1$ follows easily from Lemma \ref{aux-1}(i).

Writing (\ref{binom}) for a given $n$
for pairs $(\ad(h) + \alpha)x, y$ and $x, (\ad(h) + \beta)y$ and summing the two  
equalities obtained, we get on the left-hand side:
\begin{multline*}
\sum_{i=0}^n \binom{n}{i} \omega \Big( (\ad(h) + \alpha)^{n+1-i}(x), (\ad(h) + \beta)^i(y) \Big) 
\\ +
\sum_{i=0}^n \binom{n}{i} \omega \Big( (\ad(h) + \alpha)^{n-i}(x), (\ad(h) + \beta)^{i+1}(y) \Big)
\\ =
\sum_{i=0}^{n+1} \Big(\binom{n}{i} + \binom{n}{i-1}\Big) 
\omega \Big( (\ad(h) + \alpha)^{n+1-i}(x), (\ad(h) + \beta)^i(y) \Big) 
\\ =
\sum_{i=0}^{n+1} \binom{n+1}{i} 
\omega \Big( (\ad(h) + \alpha)^{n+1-i}(x), (\ad(h) + \beta)^i(y) \Big) 
\end{multline*}
and on the right-hand side:
$$
(\alpha + \beta)^n \omega((\ad(h) + \alpha)x, y) 
+
(\alpha + \beta)^n \omega(x, ((\ad(h) + \beta))y) 
= 
(\alpha + \beta)^{n+1} \omega(x,y).
$$
This provides the induction step.
\end{proof}

\begin{lemma}
Under the same conditions as in the previous Lemma,
\begin{multline}\label{binom-mult}
\sum_{i=0}^n \binom{n}{i} \Big[(\ad(h) + \alpha)^{n-i}(x), (\ad(h) + \beta)^i(y)\Big] \\ = 
(\ad(h) + \alpha + \beta)^n ([x,y]) - n(\alpha + \beta)^{n-1} \omega(x,y)h
\end{multline}
for any $n\in \mathbb N$, $x,y\in L$, $h\in H$, $\alpha, \beta\in K$.
\end{lemma}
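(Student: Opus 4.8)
The statement is the multiplicative analogue of the bilinear-form identity (\ref{binom}), and the proof should run by induction on $n$ in exactly the same spirit. For $n=1$ the claim reads $[(\ad(h)+\alpha)x,y] + [x,(\ad(h)+\beta)y] = (\ad(h)+\alpha+\beta)[x,y] - \omega(x,y)h$, which unwinds to $[[x,h],y] + [x,[y,h]] = [[x,y],h] - \omega(x,y)h$; since $h\in Ker\,\omega$ and $H$ is abelian, the $\omega$-Jacobi identity for the triple $x,y,h$ gives precisely $[[x,y],h] + [[h,x],y] + [[y,h],x] = \omega(x,y)h$, and rearranging (using anticommutativity to turn $[[h,x],y]$ into $-[[x,h],y]$ and $[[y,h],x]$ into $-[x,[y,h]]$) yields the base case.

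For the induction step I would mimic the previous Lemma verbatim. Assume (\ref{binom-mult}) for a given $n$ and all $x,y$; apply it to the two pairs $(\ad(h)+\alpha)x,\ y$ and $x,\ (\ad(h)+\beta)y$, and add. On the left-hand side the Pascal-triangle recombination $\binom{n}{i}+\binom{n}{i-1}=\binom{n+1}{i}$ collapses the two sums into $\sum_{i=0}^{n+1}\binom{n+1}{i}[(\ad(h)+\alpha)^{n+1-i}(x),(\ad(h)+\beta)^i(y)]$, exactly as in the proof of (\ref{binom}). On the right-hand side, the first terms combine as $(\ad(h)+\alpha+\beta)^n([(\ad(h)+\alpha)x,y]) + (\ad(h)+\alpha+\beta)^n([x,(\ad(h)+\beta)y])$; using the $n=1$ case inside the argument this becomes $(\ad(h)+\alpha+\beta)^n\bigl((\ad(h)+\alpha+\beta)[x,y] - \omega(x,y)h\bigr) = (\ad(h)+\alpha+\beta)^{n+1}[x,y] - (\alpha+\beta)^n\omega(x,y)h$, since $h\in Ker\,\omega$ forces $[(\ad(h)+\alpha+\beta)^n h$-type terms to reduce: $(\ad(h)+c)^n h = c^n h$ because $[h,h]=0$. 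Meanwhile the two correction terms $-n(\alpha+\beta)^{n-1}\omega((\ad(h)+\alpha)x,y)h$ and $-n(\alpha+\beta)^{n-1}\omega(x,(\ad(h)+\beta)y)h$ add, via Lemma \ref{aux-1}(i) together with the bilinear identity (\ref{binom}) for $n=1$ (equivalently $\omega([x,h],y)+\omega(x,[y,h]) = 0$ and $\alpha\omega(x,y)+\beta\omega(x,y) = (\alpha+\beta)\omega(x,y)$), to $-n(\alpha+\beta)^n\omega(x,y)h$. Summing the $-(\alpha+\beta)^n\omega(x,y)h$ from the main term with this $-n(\alpha+\beta)^n\omega(x,y)h$ produces exactly $-(n+1)(\alpha+\beta)^n\omega(x,y)h$, which is the required coefficient for step $n+1$.

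The one point that genuinely needs care — and which I expect to be the main obstacle — is bookkeeping the coefficient of $h$ on the right-hand side: three separate sources contribute ($\omega(x,y)h$ emerging from applying the $n=1$ relation inside $(\ad(h)+\alpha+\beta)^n$, and the two shifted correction terms), and one must verify they assemble with the correct binomial/power coefficients, invoking $h\in Ker\,\omega$, $[h,h]=0$, and Lemma \ref{aux-1}(i) at the right moments. Everything else is the same Pascal-identity manipulation already carried out for (\ref{binom}). I would therefore write the step compactly, flag the base case as the content-bearing part (it is just the $\omega$-Jacobi identity rearranged), and present the induction as "the same computation as in the previous Lemma, keeping track of the extra $h$-term."
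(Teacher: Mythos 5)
Your proposal is correct and follows essentially the same route as the paper: induction on $n$, with the base case being the $\omega$-Jacobi identity for $x,y,h$ rearranged using $h\in Ker\,\omega$, and the induction step applying the hypothesis to the pairs $(\ad(h)+\alpha)x,\,y$ and $x,\,(\ad(h)+\beta)y$, collapsing the left side via Pascal's identity and assembling the right side using the $n=1$ case, $(\ad(h)+\alpha+\beta)^n h=(\alpha+\beta)^n h$, and Lemma \ref{aux-1}(i). Your bookkeeping of the three contributions to the $\omega(x,y)h$ coefficient matches the paper's computation exactly.
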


\begin{proof}
Induction on $n$. The case $n=1$ is verified directly using the $\omega$-Jacobi identity
for $x,y,h$ and Lemma \ref{aux-1}(i). The induction step runs as follows.

Writing (\ref{binom-mult}) for a given $n$ for pairs
$(\ad(h) + \alpha)x, y$ and $x, (\ad(h) + \beta)y$, and summing the two 
equalities obtained, we get on the left-hand side:
\begin{multline*}
\sum_{i=0}^n \binom{n}{i} \Big[ (\ad(h) + \alpha)^{n+1-i}(x), (\ad(h) + \beta)^i(y) \Big]
\\ +
\sum_{i=0}^n \binom{n}{i} \Big[ (\ad(h) + \alpha)^{n-i}(x), (\ad(h) + \beta)^{i+1}(y) \Big]
\\ =
\sum_{i=0}^{n+1} \Big(\binom{n}{i} + \binom{n}{i-1}\Big) 
\Big[ (\ad(h) + \alpha)^{n+1-i}(x), (\ad(h) + \beta)^i(y) \Big]
\\ =
\sum_{i=0}^{n+1} \binom{n+1}{i} 
\Big[ (\ad(h) + \alpha)^{n+1-i}(x), (\ad(h) + \beta)^i(y) \Big]
\end{multline*}
and on the right-hand side:
\begin{multline*}
((\ad(h) + \alpha + \beta)^n ([(\ad(h) + \alpha)x, y] + [x,(\ad(h) + \beta)y]) \\ -
n(\alpha + \beta)^{n-1} (\omega((\ad(h) + \alpha)x,y) + \omega(x,(\ad(h) + \beta)y))h \\ =
((\ad(h) + \alpha + \beta)^{n+1}([x,y]) - \omega(x,y) ((\ad(h) + \alpha + \beta)^n h -
n(\alpha + \beta)^n \omega(x,y)h \\ =
((\ad(h) + \alpha + \beta)^{n+1}([x,y]) - (n+1)(\alpha + \beta)^n \omega(x,y)h.
\end{multline*}
\end{proof}

Assume the ground field $K$ is algebraically closed. As $\ad(H)$ is a space of commuting 
endomorphisms of $L$, we may consider
the root space decomposition of $L$ with respect to $\ad(H)$:
\begin{equation}\label{root}
L = L_0 \oplus \bigoplus_\alpha L_\alpha.
\end{equation}
As in the Lie algebras case, we will write $L_\alpha$ for any $\alpha\in H^*$,
assuming it being zero if $\alpha$ is not a root.
Let $L^\alpha = \set{x\in L}{[x,h] = \alpha(h)x \text{ for all } h\in H}$ 
denotes the simple subspace of the root space $L_\alpha$.

\begin{lemma}\label{roots}
Let $L$ be a finite-dimensional $\omega$-Lie algebra over an algebraically closed field,
$H$ an abelian subalgebra of $Ker\,\omega$, $\dim H > 1$,
and {\rm (\ref{root})} is the root space decomposition
of $L$ with respect to $H$. Then:
\begin{enumerate}
\item $\omega(L_\alpha, L_\beta) = 0$ for any two roots $\alpha$, $\beta$ such that 
      $\alpha + \beta \ne 0$.
\item $[L_\alpha, L_\beta] \subseteq L_{\alpha + \beta}$ for any two roots $\alpha$, $\beta$.
\item If for some nonzero root $\alpha$, there is a root $-\alpha$, then either
      there are no more nonzero roots, or both $L_\alpha$ and $L_{-\alpha}$ lie in $Ker\,\omega$.
\item If $L_0 = H$, then $H \oplus \bigoplus_\alpha L^{\alpha}$ is a Lie subalgebra of $L$.
\end{enumerate}
\end{lemma}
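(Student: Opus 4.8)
The plan is to prove the four items in the order $(i)$, $(ii)$, $(iii)$, $(iv)$, using the two binomial identities (\ref{binom}) and (\ref{binom-mult}) as the workhorses, in the same spirit as the classical root-space arguments for Lie algebras.

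For $(i)$, take $x\in L_\alpha$, $y\in L_\beta$, and apply (\ref{binom}) with $\alpha$ replaced by $-\alpha$, $\beta$ replaced by $-\beta$, and $n$ large: since $\ad(h)-\alpha(h)$ and $\ad(h)-\beta(h)$ act nilpotently on $L_\alpha$ and $L_\beta$ respectively, for $n$ bigger than the sum of the two nilpotency degrees every summand on the left-hand side vanishes, while the right-hand side equals $(-\alpha(h)-\beta(h))^n\omega(x,y)$. Choosing $h$ with $\alpha(h)+\beta(h)\neq 0$ (possible since $\alpha+\beta\neq 0$ and $K$ is infinite, being algebraically closed) forces $\omega(x,y)=0$. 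For $(ii)$, do the analogous computation with (\ref{binom-mult}): take $x\in L_\alpha$, $y\in L_\beta$, use $-\alpha(h)$, $-\beta(h)$ in place of $\alpha,\beta$, and $n$ large; the left side vanishes term by term, so $(\ad(h)-\alpha(h)-\beta(h))^n([x,y]) = n(-\alpha(h)-\beta(h))^{n-1}\omega(x,y)h$. The right-hand term lies in $H\subseteq L_0$, but it is also killed by a large power of $\ad(h)-\alpha(h)-\beta(h)$ only when $\alpha(h)+\beta(h)=0$; running over all $h$ and using $(i)$ to kill the $\omega(x,y)h$ term when $\alpha+\beta\neq 0$, one gets $(\ad(h)-(\alpha+\beta)(h))^n([x,y])=0$ for all $h$, i.e.\ $[x,y]\in L_{\alpha+\beta}$ — and the remaining case $\alpha+\beta=0$ is immediate since then $[x,y]\in L_0$ and the $\omega(x,y)h$ correction stays inside $L_0=L_{\alpha+\beta}$.

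For $(iii)$, suppose $\alpha$ and $-\alpha$ are both roots and there is a third nonzero root $\gamma$, distinct from $\pm\alpha$. By $(i)$, $\omega(L_\alpha,L_\gamma)=0$ and $\omega(L_{-\alpha},L_\gamma)=0$ (since $\pm\alpha+\gamma\neq 0$), and also $\omega(L_\alpha,L_\alpha)=\omega(L_{-\alpha},L_{-\alpha})=0$; the only potentially nonzero pairing is $\omega(L_\alpha,L_{-\alpha})$. The idea is to use Lemma \ref{aux-1}(i), $\omega([x,h],y)+\omega(x,[y,h])=0$, applied to $x\in L_\alpha$, $y\in L_{-\alpha}$: this gives $(\alpha(h)-\alpha(h))\omega(x,y)=0$, which is no information — so instead I would feed a generic element of $L_\gamma$ into the $\omega$-Jacobi identity against $L_\alpha$ and $L_{-\alpha}$, or better, invoke Lemma \ref{aux-1}(ii) with a suitable triple to conclude that the hypothetical nonzero value of $\omega$ on $L_\alpha\times L_{-\alpha}$ forces a relation incompatible with the existence of $\gamma$. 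Concretely, from (\ref{two-basic}) applied to $x\in L_\alpha$, $y\in L_{-\alpha}$, $z\in L_\gamma$, $h\in H$ and comparing root components, the term $\omega(x,y)[z,h]=\omega(x,y)\gamma(h)z$ must be matched, and since all the $d\omega$ terms and the other $\omega$-coefficients vanish by $(i)$, one gets $\omega(x,y)\gamma(h)z=0$ for all such $z$, hence $\omega(x,y)=0$. Thus if a third nonzero root exists, $\omega(L_\alpha,L_{-\alpha})=0$ too, so $L_\alpha,L_{-\alpha}\subseteq Ker\,\omega$; otherwise $\pm\alpha$ are the only nonzero roots, which is the first alternative.

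Finally, for $(iv)$, assume $L_0=H$ and set $L'=H\oplus\bigoplus_\alpha L^\alpha$. By $(ii)$, $[L^\alpha,L^\beta]\subseteq L_{\alpha+\beta}$; I must upgrade this to land in $L^{\alpha+\beta}$ and, when $\alpha+\beta=0$, inside $H$. For $x\in L^\alpha$, $y\in L^\beta$ one has $[x,h]=\alpha(h)x$ and $[y,h]=\beta(h)y$ exactly (not just nilpotently), so the $\omega$-Jacobi identity for $x,y,h$ reads $[[x,y],h] + (\alpha+\beta)(h)[x,y]$ — wait, more carefully: $[[x,y],h]+[[h,x],y]+[[y,h],x]=\omega(h,x)y+\dots$; since $h\in Ker\,\omega$ all right-hand terms involving $h$ vanish, and $[[h,x],y]=-\alpha(h)[x,y]$, $[[y,h],x]=\beta(h)[x,y]$, giving $[[x,y],h]=(\alpha+\beta)(h)[x,y]+\omega(x,y)h$. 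When $\alpha+\beta\neq 0$, $(i)$ gives $\omega(x,y)=0$, so $[x,y]\in L^{\alpha+\beta}$. When $\alpha+\beta=0$, $[x,y]\in L_0=H$ by $(ii)$ and the hypothesis, so it lies in $L'$ regardless. And $[H,L^\alpha]\subseteq L^\alpha$ trivially, $[H,H]=0$. Hence $L'$ is closed under the bracket. The one subtlety to watch is the $\alpha+\beta=0$ case in $(iv)$: here I genuinely need $L_0=H$ rather than merely $L_0$ abelian, and that is precisely what is hypothesized. The main obstacle overall is $(iii)$: the binomial identities alone do not separate $\omega(L_\alpha,L_{-\alpha})$ from zero, so one must extract the extra relation from the four-variable identity (\ref{two-basic}) and argue by root-component bookkeeping — getting that bookkeeping clean, and handling the boundary case where $L_\alpha$ or $L_{-\alpha}$ meets $L_0$, is where the real care is needed.
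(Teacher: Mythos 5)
Your arguments for (i)--(iii) are correct and follow essentially the same route as the paper: (i) and (ii) come from the binomial identities (\ref{binom}) and (\ref{binom-mult}) with $\alpha,\beta$ specialized to $-\alpha(h),-\beta(h)$ and $n$ large, the residual term $n(-\alpha(h)-\beta(h))^{n-1}\omega(x,y)h$ being killed either by (i) (when $\alpha+\beta\neq 0$) or by the vanishing of the scalar coefficient (when $\alpha+\beta=0$); and (iii) comes from feeding $x\in L_\alpha$, $y\in L_{-\alpha}$, an eigenvector $z\in L^\gamma$ and $h\in H$ into (\ref{two-basic}) (equivalently, Lemma \ref{aux-1}(ii)), where the root-component bookkeeping leaves only $\omega(x,y)\gamma(h)z=0$. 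That is exactly the paper's proof.

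The one genuine shortfall is in (iv): the claim is that $L'=H\oplus\bigoplus_\alpha L^\alpha$ is a \emph{Lie} subalgebra, and you verify only that it is closed under the bracket. Since $H\subseteq Ker\,\omega$ and (i) disposes of $\omega(L^\alpha,L^\beta)$ for $\alpha+\beta\neq 0$, the only remaining obstruction to the ordinary Jacobi identity holding on $L'$ is the pairing $\omega(L^\alpha,L^{-\alpha})$, which you never show to vanish. Fortunately the equation you already derived closes the gap: for $x\in L^\alpha$, $y\in L^{-\alpha}$, $h\in H$ it reads $[[x,y],h]=\omega(x,y)h$, and since $[x,y]\in L_0=H$ (by (ii) and the hypothesis) and $H$ is abelian, the left-hand side is zero, whence $\omega(x,y)=0$. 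Adding that one sentence makes your treatment of (iv) coincide with the paper's, which proves exactly this vanishing and then invokes (i).
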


\begin{proof}
(i)
Take $x\in L_\alpha$ and $y\in L_\beta$. Then (\ref{binom}) implies that for a sufficiently 
large $n$ and any $h\in H$, 
$$
(-\alpha(h) - \beta(h))^n \omega(x,y) = 0 .
$$
Hence $\omega (L_\alpha, L_\beta) = 0$ if $\alpha + \beta \ne 0$.

(ii)
In its turn, (\ref{binom-mult}) shows that for a sufficiently large $n$, 
$$
(\ad(h) - (\alpha(h) + \beta(h)))^n ([x,y]) = n(-\alpha(h) - \beta(h))^{n-1} \omega(x,y)h.
$$
The right-hand side here vanishes for any $\alpha$, $\beta$, as by just proved if 
$\alpha + \beta \ne 0$, then $\omega(x,y) = 0$.
Hence $[L_\alpha,L_\beta] \subseteq L_{\alpha + \beta}$.

(iii)
Suppose there are three distinct nonzero roots $\alpha, -\alpha, \beta$, and take $x\in L_\alpha$, 
$y\in L_{-\alpha}$, $z\in L_\beta$. Applying Lemma \ref{aux-1}(ii),
we see that all summands in the corresponding equality, 
lying in different root spaces, vanish. In particular, $[z,h]\omega(x,y) = 0$.
Choosing $z$ to be an eigenvector from the corresponding root space, i.e., $[z,h] = \beta(h)z$,
we see that $\omega(L_\alpha, L_{-\alpha}) = 0$. Together with (i) this implies
that both $L_\alpha$ and $L_{-\alpha}$ lie in $Ker\,\omega$.

(iv)
It is clear that $H \oplus \bigoplus_\alpha L^{\alpha}$ is a subalgebra.
Writing the $\omega$-Jacobi identity for $x\in L^\alpha$, $y\in L^{-\alpha}$, $h\in H$,
we get $\omega(x,y) = 0$. This shows that $\omega(L^\alpha, L^{-\alpha}) = 0$, which
together
with (i) implies that $\omega$ vanishes on $H \oplus \bigoplus_\alpha L^{\alpha}$.
\end{proof}

It is possible to ponder this situation further to get more exotic-looking 
properties of root systems in non-Lie $\omega$-Lie algebras, but no need in that: after all,
this machinery will be applied below only to quite degenerate situations 
when codimension of $H$ is small.

\section{Kernel and quasi-ideals}

The aim of this section is to show that in nontrivial cases, the form $\omega$
should satisfy very strong vanishing conditions, and establish a preliminary
classification of $\omega$-Lie algebras.

\begin{lemma}\label{xy}
Let $L$ be a finite-dimensional $\omega$-Lie algebra, and $x,y\in L$. Then $[x,y] \in Kx + Ky$
in each of the following cases:
\begin{enumerate}
\item $x,y \in Ker\,\omega$ and $rank(\omega) \ge 2$.
\item $x \in Ker\,\omega$ and $rank(\omega) \ge 4$.
\item $\omega(x,y)=0$ and $rank(\omega) \ge 6$.
\end{enumerate}
\end{lemma}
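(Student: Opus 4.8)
The plan is to exploit the $4$-variable identity (\ref{two-basic}) from Lemma \ref{4-var}, choosing the four arguments so that almost all the $\omega$-coefficients on the left vanish, leaving an equation of the form (scalar)$\cdot[x,y]=$ (combination of $x$, $y$, and a few auxiliary vectors). One then argues that the auxiliary vectors can be forced into $Kx+Ky$ as well, or that their coefficients vanish. The rank hypotheses are precisely what is needed to find enough room to make these choices.

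For case (i): since $x,y\in Ker\,\omega$, in (\ref{two-basic}) applied to $x,y,z,t$ the terms $\omega(z,x)[y,t]$, $\omega(y,z)[x,t]$, $\omega(x,t)[y,z]$, $\omega(t,y)[x,z]$, $\omega(x,y)[z,t]$ all die, leaving $\omega(z,t)[x,y]$ on the left, while on the right $d\omega(t,z,y)x=d\omega(z,t,x)y=0$ because $x,y\in Ker\,\omega$ (expand $d\omega$: every summand has one slot equal to $x$ or $y$), so we are left with
$$
\omega(z,t)[x,y] = d\omega(y,x,t)z + d\omega(x,y,z)t .
$$
Because $rank(\omega)\ge 2$ I can pick $z,t$ with $\omega(z,t)\ne 0$; rescaling, $[x,y]\in Kz+Kt+ \text{(span of the two coefficient vectors)}$ — not yet good enough, so instead I vary $z,t$. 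The right-hand side lies in $Kz+Kt$, and the left-hand side is the fixed vector $\omega(z,t)[x,y]$; letting $z,t$ range over all pairs spanning a given plane $P$ with $\omega|_P\ne 0$, I get $[x,y]\in P$ for that plane, and intersecting over two planes meeting in a line forces $[x,y]$ into that line — but more simply, I will just note that $d\omega(y,x,t)$ and $d\omega(x,y,z)$ are themselves values of $\omega$ against elements of $Ker\,\omega$... the cleanest route is: fix $t$; then for all $z$, $\omega(z,t)[x,y] - d\omega(x,y,z)t \in Kz$, so the linear map $z\mapsto \omega(z,t)[x,y]-d\omega(x,y,z)t$ has image in $Kz$ for every $z$, hence (a standard linear-algebra fact about maps sending every vector into its own span) it is a scalar multiple of the identity plus — actually since its image lies in the $2$-dimensional space $K[x,y]+Kt$, one deduces $[x,y]\in Kt$ or $[x,y]\in Kx+Ky$ directly by plugging $z=x,y$. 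I expect the bookkeeping here to be the genuinely fiddly part.

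For cases (ii) and (iii) the strategy is the same but one needs more non-degeneracy to kill the extra terms: in (ii), with only $x\in Ker\,\omega$, the term $\omega(x,y)[z,t]$ and $\omega(x,t)[y,z]$ and $\omega(t,y)[x,z]$ survive a priori, so I choose $z,t\in Ker\,\omega$ as well — which is possible since $rank(\omega)\ge 4$ means $\dim Ker\,\omega \le \dim L - 4$, wait, that is the wrong direction; rather $rank(\omega)\ge 4$ guarantees a large non-degenerate part but also I want $z,t$ chosen with prescribed $\omega$-pairings against $x$ and $y$. Concretely: pick $z,t$ with $\omega(z,t)\ne 0$ but $\omega(z,x)=\omega(z,y)=\omega(t,x)=\omega(t,y)=0$; the latter four conditions cut out a subspace of codimension at most $4$ (at most $2$ from $x^\bot$-type conditions once $x\in Ker\,\omega$ is used, since $\omega(z,x)=0$ is automatic), and $rank(\omega)\ge 4$ ensures $\omega$ is non-degenerate on a complement of $Ker\,\omega$ of dimension $\ge 4$, so inside the intersection there remains a plane on which $\omega$ is non-zero, giving the required $z,t$. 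Then (\ref{two-basic}) collapses to the same shape as in case (i) and the earlier argument applies, now also using $x\in Ker\,\omega$ to kill $d\omega(t,z,y)x=0$. Case (iii), with merely $\omega(x,y)=0$, needs $z,t$ chosen orthogonal to both $x$ and $y$ with $\omega(z,t)\ne0$: the orthogonality conditions have codimension $\le 4$, and $rank(\omega)\ge 6$ leaves a non-degenerate plane inside $x^\bot\cap y^\bot$, so such $z,t$ exist; then $d\omega(t,z,y)x$, $d\omega(z,t,x)y$ need handling — one shows $d\omega(t,z,y)=\omega([t,z],y)+\omega([y,t],z)+\omega([z,y],t)=0$ using $z,t\in x^\bot\cap y^\bot$ together with Lemma \ref{aux-1}-type skew relations, or absorbs these into the $Kx+Ky$ conclusion directly.

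The main obstacle, I expect, is not any single step but the combinatorics of simultaneously arranging the vanishing of the six $\omega$-coefficients on the left of (\ref{two-basic}) \emph{and} the vanishing (or controllability) of the four $d\omega$-coefficients on the right, while keeping at least one coefficient non-zero — this is a counting argument on codimensions of orthogonal-complement conditions, and getting the exact thresholds $2,4,6$ to match is where care is required. Once the identity is reduced to $\text{(nonzero scalar)}\cdot[x,y]\in Kz+Kt$ with additional freedom in $z,t$, extracting $[x,y]\in Kx+Ky$ is routine linear algebra (the "every vector maps into its span implies scalar" lemma), which I would invoke rather than belabor.
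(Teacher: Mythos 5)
Your starting point is right -- specialize the four-variable identity (\ref{two-basic}) to a pair $z,t$ with $\omega(z,t)\ne 0$ chosen orthogonal to $x$ and $y$, so that the left-hand side collapses to $\omega(z,t)[x,y]$; the codimension counts you sketch for finding such $z,t$ under the rank hypotheses are essentially the ones the paper uses (it phrases them via a maximal isotropic subspace $W$ containing $Kx+Ky$ and a dual vector $t=z^*$). But there are two genuine gaps. First, in case (i) you assert $d\omega(t,z,y)=0$ because ``every summand has one slot equal to $x$ or $y$'': expanding, $d\omega(t,z,y)=\omega([t,z],y)+\omega([y,t],z)+\omega([z,y],t)$, and only the first summand has $y$ sitting in an $\omega$-slot; the other two pair $z$ or $t$ against the commutators $[y,t]$, $[z,y]$, which need not lie in $Ker\,\omega$. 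So the coefficients of $x$ and $y$ do \emph{not} vanish (and they do not need to -- they are exactly the harmless terms), whereas the coefficients of $z$ and $t$, namely $d\omega(y,x,t)$ and $d\omega(x,y,z)$, are the ones that must be killed. Your reduction to $\omega(z,t)[x,y]\in Kz+Kt$ is therefore the wrong equation, and the salvage attempts (intersecting planes, plugging $z=x$ -- which destroys the nonvanishing of $\omega(z,t)$ since $x\in Ker\,\omega$, the ``image in its own span'' lemma) are not carried through and do not obviously work, since the $d\omega$-coefficients vary with $z,t$.

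The missing idea is simple and is the heart of the paper's proof: apply $\omega(\cdot,z)$ (and $\omega(\cdot,t)$) to both sides of the reduced identity. In case (i) this yields $\omega([x,y],z)=\omega([x,y],z)\,\omega(t,z)=-\omega([x,y],z)$, hence $\omega([x,y],z)=0$, and symmetrically $\omega([x,y],t)=0$; these are precisely the coefficients of $t$ and $z$, so $[x,y]=d\omega(t,z,y)x+d\omega(z,t,x)y\in Kx+Ky$. In cases (ii) and (iii) the same pairing produces relations such as $2\omega([x,y],z)-\omega([x,z],y)=0$, which must then be combined with the relation obtained by interchanging $y$ and $z$ (this uses that $y$ and $z$ sit symmetrically inside the isotropic subspace $W$, and that the characteristic is not $3$) to conclude $\omega([x,y],z)=\omega([x,z],y)=0$. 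None of this appears in your proposal, and in (iii) you even identify the wrong pair of terms ($d\omega(t,z,y)x$, $d\omega(z,t,x)y$) as the ones ``needing handling''. Without the pairing-and-symmetry step the proof does not close.
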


\begin{proof}
All the cases follow the same format with slight modifications. We use (\ref{two-basic})
for suitably chosen $z$ and $t$. The condition of vanishing of $\omega$ ensures that all but one 
terms on the left-hand side vanish, and, applying $\omega(\cdot,z)$ to both sides,
we derive further vanishing of the corresponding terms on the right-hand side.

(i) Choose $z,t\in L$ such that $\omega(z,t) = 1$. In that case (\ref{two-basic}) gives:
\begin{equation}\label{yoyo-0}
[x,y] = d\omega(t,z,y)x + d\omega(z,t,x)y + \omega([y,x],t)z + \omega([x,y],z)t.
\end{equation}
Applying to both sides of this equality $\omega(\cdot, z)$, we get
$\omega([x,y],z) = -\omega([x,y],z)$, whence $\omega([x,y],z) = 0$. Similarly, 
$\omega([y,x],t) = 0$ and (\ref{yoyo-0}) reduces to the desired condition.

(ii) We may assume $y\notin Ker\,\omega$, otherwise we are covered by case (i). 
Let $y\in V$ for a certain linear complement $V$ of $Ker\,\omega$ in $L$, 
$\omega$ being nondegenerate on $V$. Since $Ky$
is an $1$-dimensional isotropic subspace of $V$, it lies in a certain maximal isotropic
subspace $W$. Then there is a symmetric nondegenerate bilinear form 
on $W$ such that $V = W \oplus W^*$, $W^*$ is a conjugate of $W$ with respect to 
that form, and 
$$
\omega(a + f, a^\prime + f^\prime) = f^\prime(a) - f(a^\prime)
$$
for $a,a^\prime \in W, f,f^\prime \in W^*$.

As $\dim W = \frac{1}{2} rank(\omega) \ge 2$, we may take $z\in W$ linearly independent with 
$y$. Take $t=z^*$. In that case (\ref{two-basic}) gives:
\begin{multline}\label{yoyo-4}
[x,y] = d\omega(t,z,y)x + d\omega(z,t,x)y \\ +  
(\omega([y,x],t) + \omega([x,t],y))z + 
(\omega([x,y],z) + \omega([z,x],y))t.
\end{multline}
Applying to both sides of this equality $\omega(\cdot,z)$, we get:
$$
\omega([x,y],z) = -\omega([x,y],z) - \omega([z,x],y) ,
$$
whence
$$
2\omega([x,y],z) - \omega([x,z],y) = 0 .
$$

By symmetry considerations, interchanging $y$ and $z$, we get
$$
2\omega([x,z],y) - \hfill\linebreak \omega([x,y],z) = 0 ,
$$
whence 
$$
\omega([x,y],z) = \omega([x,z],y) = 0 ,
$$
and the condition desired follows again from (\ref{yoyo-4}).

(iii)
We may assume $x,y\notin Ker\,\omega$, otherwise we are covered by cases (i) and (ii).
We reason as in the previous case, enlarging the isotropic subspace $Kx + Ky$
to a maximal isotropic subspace $W$ in a linear complement $V$ of $Ker\,\omega$.
Since $\dim W = \frac{1}{2} rank(\omega) \ge 3$, we may take $z\in W$ linearly independent 
with $x,y$. Take $t=z^*$. Then (\ref{two-basic}) gives:
$$
[x,y] =
d\omega(t,z,y)x + d\omega(z,t,x)y + d\omega(y,x,t)z + d\omega(x,y,z)t.
$$
Applying $\omega(\cdot,z)$ to both sides of this equality, we get
$$
\omega([x,y],z) = -d\omega([x,y],z) .
$$
Permuting $x,y,z$, we get 
$$
\omega([x,y],z) = d\omega([x,y],z) = 0 ,
$$
and the condition desired readily follows.
\end{proof}

The just proved Lemma shows, in particular, that for a sufficiently large $rank(\omega)$, 
$Ker\,\omega$ is a quasi-ideal of an $\omega$-Lie algebra
(recall that a subspace $I$ of an algebra $L$ is called \textit{quasi-ideal} if
$[I,A] \subseteq I + A$ for any subspace $A\subseteq L$). Quasi-ideals of Lie algebras
were studied by Amayo in \cite[Part I]{amayo}. It is possible to develop a parallel theory of 
quasi-ideals of $\omega$-Lie algebras, but it turns out that it will largely coincide
with the Lie algebras case (which follows, a posteriori, also from the structural results about 
$\omega$-Lie algebras obtained below). Thus we restrict ourselves to the immediate case we need,
namely, of $1$-dimensional quasi-ideals.

\begin{lemma}\label{quasi}
Let $L$ be an $\omega$-Lie algebra, $I$ an $1$-dimensional quasi-ideal of $L$,
$I \subseteq Ker\,\omega$. Then either $I$ is an ideal of $L$, or $L$ is a Lie algebra.
\end{lemma}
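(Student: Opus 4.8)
The plan is to put $I = Kh$ with $0\neq h\in Ker\,\omega$ and to analyse the operator $\ad(h)$. We may assume $\dim L\ge 3$, since a $1$- or $2$-dimensional $\omega$-Lie algebra is automatically a Lie algebra. Taking $A=Kx$ in the definition of a quasi-ideal gives $[h,x]\in Kh + Kx$ for every $x\in L$. Since $[h,h]=0$, the line $Kh$ is $\ad(h)$-invariant, so $\ad(h)$ induces an operator on $L/Kh$; the inclusion just obtained says that every vector of $L/Kh$ is an eigenvector of this induced operator, and as $\dim(L/Kh)\ge 2$ this forces the induced operator to be a scalar, say $c$. Hence
\[
[h,x] = \mu(x)h + cx \qquad (x\in L)
\]
for some linear form $\mu\colon L\to K$, with $\mu(h) = -c$ because $[h,h]=0$.

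If $c=0$ then $[h,L]\subseteq Kh$, so $I=Kh$ is an ideal and we are done. So suppose $c\neq 0$. The main step is to feed the formula $[x,h] = -\mu(x)h - cx$ into the four-variable identity (\ref{two-basic}) of Lemma \ref{4-var} taken with $t=h$. Because $h\in Ker\,\omega$, every term of (\ref{two-basic}) carrying a factor $\omega(\cdot,h)$ vanishes, and the same observation lets one evaluate each of the four $d\omega$-terms at $t=h$ in closed form (each becomes a scalar multiple of the corresponding value of $\omega$). Collecting everything, the identity collapses to a relation of the form
\[
c\bigl(\omega(y,z)x + \omega(z,x)y + \omega(x,y)z\bigr) = \xi(x,y,z)\,h
\]
for a suitable scalar $\xi(x,y,z)$; dividing by $c$ yields $\omega(y,z)x + \omega(z,x)y + \omega(x,y)z\in Kh$ for all $x,y,z\in L$.

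To finish, suppose $\omega\neq 0$ and choose $a,b\in L$ with $\omega(a,b)=1$. Applying the last inclusion to the triple $a,b,z$ gives $z\in Kh + Ka + Kb$ for every $z$, so $L = Kh + Ka + Kb$; moreover $\omega$ is nondegenerate on $Ka+Kb$ while $\omega(h,\cdot)=0$, so $h\notin Ka+Kb$ and $\dim L = 3$. Thus either $\omega = 0$, i.e.\ $L$ is a Lie algebra, or $\dim L = 3$, a low-dimensional case which can be treated separately (three-dimensional $\omega$-Lie algebras were classified in \cite{nurowski}). I expect the genuinely laborious point to be the middle paragraph: correctly evaluating the four $d\omega$-terms of (\ref{two-basic}) at $t=h$ using $h\in Ker\,\omega$ together with the formula for $[h,x]$, and checking that the part lying outside $Kh$ is exactly $c$ times the cyclic expression $\omega(y,z)x + \omega(z,x)y + \omega(x,y)z$; the rest is bookkeeping.
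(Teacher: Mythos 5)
Your argument is sound up to and including the reduction: I checked that substituting $t=h$ into (\ref{two-basic}), using $\omega(\cdot,h)=0$ and $[x,h]=-\mu(x)h-cx$, gives
\[
c\bigl(\omega(y,z)x+\omega(z,x)y+\omega(x,y)z\bigr)
=\bigl(d\omega(x,y,z)+\mu(x)\omega(y,z)+\mu(y)\omega(z,x)+\mu(z)\omega(x,y)\bigr)h ,
\]
so for $c\ne 0$ either $\omega=0$ or $\dim L=3$. (The paper argues differently: it writes the three-variable $\omega$-Jacobi identity for $x,y,a$ to get an explicit formula for $[x,y]$ and then multiplies by $a$ once more; no four-variable identity is needed.) The genuine gap is your final sentence: the lemma allows no exception for $\dim L=3$, so deferring that case to ``a separate treatment'' leaves the statement unproved exactly where it is at stake.

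Moreover, the deferred case cannot be closed. The algebra of type $(IV)_T$ that the paper itself uses in \S\ref{sect-main} --- $[e_1,e_2]=e_2$, $[e_1,e_3]=e_3$, $[e_2,e_3]=e_1$, with $\omega(e_2,e_3)=2$ and $\omega(e_1,\cdot)=0$ --- is a three-dimensional non-Lie $\omega$-Lie algebra in which $Ke_1$ is a one-dimensional quasi-ideal contained in $Ker\,\omega$ but is not an ideal. Hence Lemma \ref{quasi} as stated fails in dimension $3$, and the paper's own proof contains a sign slip: the coefficient of $a$ in its first displayed formula for $[x,y]$ should be $\mu([x,y])-\omega(x,y)$, after which the comparison yields $\omega(x,y)=2\mu([x,y])$ rather than $\omega(x,y)=0$. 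Your computation, which is valid for $\dim L\ge 4$, in fact establishes the correct statement --- either $I$ is an ideal, or $L$ is a Lie algebra, or $\dim L=3$ --- and since the lemma is cited later only for motivation, this amendment does not seem to affect the main theorems.
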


\begin{proof}
We chiefly follow the line of reasoning in \cite[pp. 31--32]{amayo}.

If $\dim L = 2$, the Lemma is trivially true, so assume $\dim L \ge 3$.
Let $I = Ka$, $a\in Ker\,\omega$. Then
\begin{equation}\label{yoyo-1}
[x,a] = \lambda(x)x + \mu(x)a
\end{equation}
for some functions $\lambda, \mu: L\to K$ and for any $x\in L$. 
By linearity, $\lambda(x) = \lambda$ is constant.
If $\lambda = 0$, then $Ka$ is an ideal of $L$, so assume $\lambda \ne 0$. Replacing
$a$ by $\frac{1}{\lambda} a$, we may set $\lambda = 1$.
Writing the $\omega$-Jacobi identity for $x,y,a$, and taking into account (\ref{yoyo-1}), we get:
$$
[x,y] = \mu(x) y - \mu(y) x + (\omega(x,y) - \mu([x,y]))a.
$$
Multiplying the last equality by $a$, we get:
$$
[x,y] = \mu(x) y - \mu(y) x - \mu([x,y])a.
$$
Comparing the last two equalities, we get $\omega(x,y) = 0$ for any $x,y\in L$ linearly
independent with $a$. Hence $\omega$ vanishes and $L$ is a Lie algebra.
\end{proof}

Lemma \ref{xy} shows also that any isotropic subspace of an $\omega$-Lie algebra
is a Lie subalgebra in which every $1$-dimensional subspace is a quasi-ideal, 
provided the rank of $\omega$ is large enough. Such Lie algebras have a fairly trivial structure.

\begin{definition}
A semidirect sum $A \oplus Kx$ where $A$ is an abelian Lie algebra and 
$\ad x$ acts on $A$ as the identity map, is called an \textit{almost abelian} Lie algebra,
and $A$ is called its \textit{abelian part}.
\end{definition}

\begin{lemma}\label{2-quasi}
A finite-dimensional Lie algebra such that every two its linearly independent
elements generate a two-dimensional subalgebra, is either abelian or almost abelian.
\end{lemma}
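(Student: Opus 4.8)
The plan is to first rephrase the hypothesis in the convenient form $[x,y] \in Kx + Ky$ for \emph{all} $x,y \in L$ (for linearly dependent $x,y$ this is automatic, and for independent ones it is exactly the assertion that $\mathrm{span}(x,y)$ is a subalgebra). We may assume $L$ is not abelian, and the first goal is to produce $x \in L$ and $0 \neq y \in L$ with $[x,y] = y$, i.e.\ an $x$ for which $\ad x$ has the eigenvalue $1$. To get a nonzero eigenvalue in the first place, observe that if $\ad(x)^2 = 0$ for every $x$, then for all $x,w$ the element $[x,w]$ commutes with $x$ (since $[x,[x,w]]=0$) and with $w$ (since $[w,[w,x]]=0$); writing $[x,w] = \rho x + \sigma w$ and bracketing with $x$ and with $w$ gives $\sigma [x,w] = \rho[x,w] = 0$, hence $[x,w] = 0$ and $L$ is abelian, a contradiction. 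So some $\ad x$ has $\ad(x)^2 \neq 0$, whence its image is not contained in $Kx$; picking $z$ with $[x,z] = \gamma x + \delta z$, $\delta \neq 0$, we get $\ad(x)[x,z] = \delta[x,z]$, a nonzero eigenvalue $\delta \in K$ with eigenvector $[x,z]$. Rescaling $x$ produces the desired pair.

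With $D := \ad x$ and $Dy = y$, I would extract two consequences of the defining relation. First, every eigenvalue of $D$ lying in $K$ is $0$ or $1$: if $Dw = \mu w$ with $\mu \neq 0$ and $w \notin Ky$, then $x,y,w$ are linearly independent, and applying the hypothesis to $x$ and $y+w$ (whose bracket is $y + \mu w$) forces $\mu = 1$. Second, for $z \notin Kx$, writing $[x,z] = \gamma x + \delta z$ gives $D^2 z = \delta\, Dz$; if $Dz \neq 0$ this exhibits $Dz$ as an eigenvector with eigenvalue $\delta$, so $\delta \in \{0,1\}$ by the first point, and hence $D^2 z \in \{0, Dz\}$ for \emph{all} $z$. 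Consequently $D^3 = D^2$, so $L = \ker D^2 \oplus \ker(D-\mathrm{id})$. The point I expect to be the main obstacle is refining $\ker D^2$ to $\ker D$: if some $z \in \ker D^2$ has $Dz \neq 0$, then $\delta\, Dz = D^2 z = 0$ forces $\delta = 0$, so $Dz \in Kx$, i.e.\ $[x,u] = x$ for some $u$; but then $D[u,y] = [x,y] + [u,y] = y + [u,y]$, and expanding $[u,y] = au + by$ gives $ax = y + au$, contradicting the linear independence of $x,u,y$. Hence $\ker D^2 = \ker D$, and $L = \ker D \oplus L_1$ with $L_1 := \ker(D-\mathrm{id}) \ni y$ and $D|_{L_1} = \mathrm{id}$.

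Finally I would identify the two summands. $L_1$ is abelian: for $y_1,y_2 \in L_1$ we have $[y_1,y_2] \in Ky_1 + Ky_2 \subseteq L_1$, while also $D[y_1,y_2] = 2[y_1,y_2]$ and $[y_1,y_2] \in \ker(D-\mathrm{id})$, forcing $[y_1,y_2] = 0$. And $\ker D = Kx$: if $v \in \ker D$ with $v \notin Kx$ then $[x,v] = 0$, so $[x, y+v] = y$ for any $0 \neq y \in L_1$; the hypothesis gives $y = \alpha x + \beta(y+v)$, and comparing the $\ker D$- and $L_1$-components yields $\beta = 1$ and $v = -\alpha x \in Kx$, a contradiction. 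Thus $L = Kx \oplus L_1$ with $L_1$ abelian and $\ad x$ acting as the identity on $L_1$, i.e.\ $L$ is almost abelian. Apart from the exclusion of a Jordan block of $\ad x$ at $0$, everything reduces to short manipulations of $[x,y] \in Kx + Ky$; in particular no passage to the algebraic closure is needed, since every eigenvalue used is produced inside $K$.
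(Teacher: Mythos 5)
Your proof is correct, but it takes a genuinely different route from the paper's. The paper's argument is a four-line bilinearity trick: write $[x,y]=\lambda(x,y)x+\mu(x,y)y$, note that anticommutativity gives $\mu(x,y)=-\lambda(y,x)$, and that bilinearity of the bracket forces $\lambda(x,y)$ to be independent of its first argument (compare coefficients in $[x+x',y]=[x,y]+[x',y]$ for independent $x,x',y$), so $[x,y]=\lambda(y)x-\lambda(x)y$ for a single linear form $\lambda$; the dichotomy $\lambda=0$ versus $\lambda\ne 0$ is then exactly abelian versus almost abelian. You instead locate a non-nilpotent $\ad x$, normalize an eigenvalue to $1$, and run a spectral analysis of $D=\ad x$: eigenvalues in $K$ lie in $\{0,1\}$, $D^3=D^2$, $\ker D^2=\ker D=Kx$, and the $1$-eigenspace is abelian. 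I checked the individual steps --- the production of a nonzero eigenvalue inside $K$, the exclusion of a Jordan block at $0$ via the independence of $x,u,y$, and the identification of the two summands --- and they all hold; you are also right that no passage to the algebraic closure is needed. What the paper's approach buys, besides brevity, is the explicit closed formula $[x,y]=\lambda(y)x-\lambda(x)y$, which is reused verbatim later (the multiplications $[a_i,a_j]=\alpha_j a_i-\alpha_i a_j$ on isotropic subspaces in the proof of Lemma \ref{no-omega} are quoted as coming ``from the proof'' of this lemma), whereas your argument recovers that formula only a posteriori from the classification. One cosmetic point: the paper's $\ad x$ is right multiplication, so to match its definition of almost abelian literally you should replace your $x$ by $-x$ at the end.
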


\begin{proof} 
This is implicit in \cite[Part I, Theorem 3.6 and proof of Theorem 3.8]{amayo}.
As the proof is very simple and we will need a similar reasoning later, we will reproduce it 
here.

Let $L$ be a Lie algebra with the property specified in the condition of the Lemma.
We may assume $\dim L > 1$.
Write 
$$
[x,y] = \lambda(x,y)x + \mu(x,y)y
$$
for any two elements $x,y \in L$. By 
anti-commutativity, $\mu(x,y) = -\lambda(y,x)$, and by linearity $\lambda$ is constant
in the first argument, so $[x,y] = \lambda(y)x - \lambda(x)y$
for some linear form $\lambda: L\to K$. If $\lambda = 0$, then $L$ is abelian. 
If $\lambda \ne 0$, write $L = Ker\,\lambda \oplus Kx$ for $x\in L$ such that $\lambda(x) = 1$,
and then $L$ is almost abelian.
\end{proof}

Putting all this together, we get

\begin{lemma}\label{main-lemma}
Let $L$ be a finite-dimensional $\omega$-Lie algebra which is not a Lie algebra. 
Then  one of the following holds:
\begin{enumerate}
\item $L$ has a Lie subalgebra of codimension $1$.
\item 
$Ker\,\omega$ is an abelian or almost abelian Lie subalgebra of $L$ of codimension $2$.
\item 
$L$ is an abelian extension of a simple $\omega$-Lie algebra with nondegenerate $\omega$.
\end{enumerate}
\end{lemma}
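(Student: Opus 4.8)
The plan is to run a case analysis on the ideal structure of $L$ and feed the resulting pieces into Lemmas \ref{xy}, \ref{aux}, \ref{quasi}, \ref{2-quasi} and Corollaries \ref{ideal}, \ref{ideals-codim-g1}. As a preliminary normalization I would first record: $\omega\neq 0$ (otherwise $L$ is a Lie algebra), so $rank(\omega)\geq 2$; by Lemma \ref{xy}(i) the subset $Ker\,\omega$ is closed under the bracket, hence is a Lie subalgebra in which any two linearly independent elements span a $2$-dimensional subalgebra, so by Lemma \ref{2-quasi} it is abelian or almost abelian; and therefore, as soon as $rank(\omega)=2$ is known, conclusion (ii) follows immediately. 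Thus from this point on I would assume $rank(\omega)\geq 4$.

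The key auxiliary fact I would isolate is that a simple $\omega$-Lie algebra $M$ which is not a Lie algebra and has $rank(\omega_M)\geq 4$ necessarily has $\omega_M$ nondegenerate. The proof is short: $\dim M\geq 3$ since $M$ is not a Lie algebra, and if $0\neq a\in Ker\,\omega_M$ then Lemma \ref{xy}(ii) gives $[a,y]\in Ka+Ky$ for every $y$, so $Ka$ is a $1$-dimensional quasi-ideal contained in $Ker\,\omega_M$; by Lemma \ref{quasi} together with the non-Lie assumption, $Ka$ is an ideal of $M$, contradicting simplicity.

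With this in hand the remaining case analysis is quick. If $L$ is simple, the auxiliary fact makes $\omega$ nondegenerate and $L$ is itself a simple $\omega$-Lie algebra with nondegenerate form, which is conclusion (iii). If $L$ is not simple but has an ideal of codimension $1$, that ideal is a Lie subalgebra by Corollary \ref{ideal} and conclusion (i) holds. Otherwise every proper ideal of $L$ has codimension $>1$; I would then pick a maximal proper ideal $I$, which is nonzero because $L$ is not simple and which makes $L/I$ simple, and apply Corollary \ref{ideals-codim-g1}: the conclusion of Lemma \ref{aux} holds for $I$. Its options (iv), (ii) and (iii) yield respectively a contradiction (since $L$ is not Lie), a codimension-$1$ Lie subalgebra (conclusion (i)), and $rank(\omega)=2$ (excluded). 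Hence option (i) of Lemma \ref{aux} holds: $L$ is multiplicative and $I$ is an abelian ideal which, as an $L/I$-module, is a direct sum of $1$-dimensional modules. Since $I\subseteq Ker\,\omega$ by Lemma \ref{ideals}, the induced form $\bar\omega$ on $L/I$ has $rank(\bar\omega)=rank(\omega)\geq 4$; moreover $L/I$ is not a Lie algebra, for otherwise $\bar\omega$ would vanish, the linear form $\lambda$ witnessing multiplicativity (which already kills $I$) would vanish on $[L/I,L/I]$ and hence on $[L,L]$, forcing $\omega=0$; and $L/I$, being simple and not abelian, equals its own commutant, so its $1$-dimensional module is unique and $L$ is genuinely an abelian extension of $L/I$ in the sense of the definition. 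Applying the auxiliary fact to $L/I$ gives $\bar\omega$ nondegenerate, so $L$ is an abelian extension of a simple $\omega$-Lie algebra with nondegenerate form, which is conclusion (iii).

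The step I expect to be the real obstacle is this last one: pulling out of Lemma \ref{aux}(i) exactly the shape required by (iii) --- an abelian extension, with all $1$-dimensional constituents isomorphic, of a \emph{simple} algebra carrying a \emph{nondegenerate} form. Making this precise rests on three small points, each requiring a little care but no lengthy computation: choosing $I$ maximal so that $L/I$ is simple; using perfectness of $L/I$ both to pin down its unique $1$-dimensional module and to match the definition of abelian extension; and invoking the auxiliary fact to upgrade ``a simple quotient of rank $\geq 4$ admits no nonzero $\omega$-kernel'' into genuine nondegeneracy of $\bar\omega$. Everything else --- eliminating options (ii)--(iv) of Lemma \ref{aux}, and noticing that a codimension-$2$ kernel already delivers (ii) via Lemma \ref{2-quasi} --- is bookkeeping.
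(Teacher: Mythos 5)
Your proof is correct and follows essentially the same route as the paper: reduce to $rank(\omega)\ge 4$ via Lemmas \ref{xy}(i) and \ref{2-quasi}, dispose of the simple case, then take a maximal ideal and sort the cases of Corollary \ref{ideals-codim-g1}/Lemma \ref{aux}. Your auxiliary fact (nondegeneracy of $\omega$ on a simple non-Lie quotient of rank $\ge 4$, proved via $1$-dimensional quasi-ideals and Lemma \ref{quasi}) is in fact a more careful justification of the paper's one-line assertion that $Ker\,\omega$ is an ideal once $rank(\omega)\ge 4$, and your remarks on matching the definition of abelian extension are a welcome extra precision rather than a deviation.
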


\begin{proof}
By Lemmata \ref{xy}(i) and \ref{2-quasi}, $Ker\,\omega$ is an abelian or almost
abelian Lie algebra. If $rank(\omega) = \codim Ker\omega = 2$, we are in case (ii), so assume
$rank(\omega) \ge 4$. Then by Lemma \ref{xy}(ii), $Ker\,\omega$ is an ideal.

If $L$ is simple, then $Ker\,\omega = 0$, which is covered by case (iii). So suppose $L$ is not simple
and consider a nonzero maximal ideal $I$ of $L$. By Corollary \ref{ideal}, $I$ is a Lie algebra.
If $\codim I = 1$, we are in case (i), so let $\codim I > 1$. 
Then by Corollary \ref{ideals-codim-g1} either $L$ is an abelian
extension of a simple $\omega$-Lie algebra $L/I$, or 
$I$ is contained in a Lie subalgebra of codimension $1$.
In the former case, as $rank(\omega|_{L/I}) = rank(\omega) \ge 4$,
by already noted, $\omega$ is nondegenerate on $L/I$, so we are in case (iii).
In the latter case we are in case (i).
\end{proof}

We will treat the cases of Lemma \ref{main-lemma} subsequently in the next three sections.

\section{$(\alpha, \lambda)$-derivations}\label{sec-der}

In the previous sections we had encountered repeatedly a situation when an $\omega$-Lie algebra
has a Lie subalgebra of codimension $1$. In this section we study this situation.
(As, by Corollary \ref{ideal}, proper ideals are necessarily Lie subalgebras, 
this includes also the case of ideals of codimension $1$).

Let $L$ be an $\omega$-Lie algebra and $A$ a subalgebra of $L$ of codimension $1$. Write 
$L = A \oplus Kv$ for some $v \in L$. Then 
\begin{equation}\label{mult-2}
[x,v] = D(x) + \lambda(x)v
\end{equation}
for $x \in A$, and some linear maps $D: A \to A$ and $\lambda: A \to K$. 
Straightforward calculation shows that the $\omega$-Jacobi identity for $L$
is equivalent to the following three conditions: first, $A$ is an $\omega$-Lie algebra, second,
\begin{equation}\label{pre-der}
D([x,y]) - [D(x),y] + [D(y),x] = \lambda(y)D(x) - \lambda(x)D(y) + \omega(y,v)x - \omega(x,v)y,
\end{equation}
and third,
\begin{equation}\label{mult-1}
\omega(x,y) = \lambda([x,y])
\end{equation}
for any $x,y \in A$.

In particular, we have
\begin{lemma}\label{lemma-codim1-mult}
A subalgebra of codimension $1$ in an $\omega$-Lie algebra is a multiplicative $\omega$-Lie 
algebra.
\end{lemma}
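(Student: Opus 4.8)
The plan is to read off the statement directly from the structural analysis already carried out in the section. Let $L$ be an $\omega$-Lie algebra and $A$ a subalgebra of codimension $1$, so that $L = A \oplus Kv$ for some $v \in L$, and the bracket with $v$ is described by \eqref{mult-2}. The section has just observed that the $\omega$-Jacobi identity for $L$ is equivalent to three conditions: $A$ being an $\omega$-Lie algebra, the relation \eqref{pre-der}, and the relation \eqref{mult-1}, namely $\omega(x,y) = \lambda([x,y])$ for all $x,y \in A$.

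The key point is that \eqref{mult-1} is exactly the multiplicativity condition \eqref{mult} for the $\omega$-Lie algebra $A$, with the linear form $\lambda \colon A \to K$ appearing in \eqref{mult-2} serving as the required multiplier. So the proof is essentially one sentence: since $L$ is an $\omega$-Lie algebra, the equivalent system of conditions holds, in particular \eqref{mult-1}, which says precisely that the $\omega$-Lie algebra $A$ is multiplicative, with the linear form being the $\lambda$ of \eqref{mult-2}. One may wish to note explicitly that the form on $A$ is the restriction $\omega|_{A \times A}$, and that $A$ being an $\omega$-Lie algebra with respect to this restriction is the first of the three conditions.

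There is essentially no obstacle here; the lemma is a packaging of something already established. The only thing to be careful about is not to reprove that the three conditions are equivalent to the $\omega$-Jacobi identity — that "straightforward calculation" is asserted in the text immediately before the lemma and should simply be invoked. I would therefore present the proof as follows.

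\begin{proof}
Keeping the notation above, write $L = A \oplus Kv$ and let $\lambda \colon A \to K$ be the linear form from \eqref{mult-2}. Since $L$ is an $\omega$-Lie algebra, the three conditions equivalent to the $\omega$-Jacobi identity hold; in particular, $A$ is an $\omega$-Lie algebra with respect to the restriction of $\omega$, and \eqref{mult-1} holds. But \eqref{mult-1} is exactly the multiplicativity condition \eqref{mult} for $A$, with multiplier $\lambda$. Hence $A$ is a multiplicative $\omega$-Lie algebra.
\end{proof}
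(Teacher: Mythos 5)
Your proof is correct and is exactly the paper's argument: the lemma is stated there as an immediate consequence of the preceding observation that the $\omega$-Jacobi identity for $L = A \oplus Kv$ is equivalent to the three conditions, of which (\ref{mult-1}) is precisely the multiplicativity of $A$ with multiplier $\lambda$. Nothing further is needed.
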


Equation (\ref{pre-der}) suggests the following
\begin{definition}
A linear map $D: A \to A$ of an anticommutative algebra $A$ is called 
\textit{$(\alpha, \lambda)$-derivation} of $A$ if there are linear forms $\alpha, \lambda: A \to K$ 
such that 
\begin{equation}\label{der}
D(ab) = D(a)b + aD(b) + \lambda(b)D(a) - \lambda(a)D(b) + \alpha(b)a - \alpha(a)b
\end{equation}
holds for any $a, b \in A$.
\end{definition}

So, given a multiplicative $\omega$-Lie algebra $A$ (with $\omega$ given by (\ref{mult-1}))
and its $(\alpha, \lambda)$-derivation $D$, 
we get an $\omega$-Lie algebra as a vector space $A \oplus Kv$, with multiplication and $\omega$
extended from $A$, and defining the rest by (\ref{mult-2}) and $\omega(x,v) = \alpha(x)$.
Conversely, every $\omega$-Lie algebra with a subalgebra of codimension $1$ occurs in 
that way.
An $\omega$-Lie algebra with a subalgebra $A$ of codimension $1$ is a Lie algebra
if and only if $\alpha$ = 0 and $\lambda([A,A]) = 0$.

Unfortunately, the space of all $(\alpha, \lambda)$-derivations of a given noncommutative algebra
$A$ for a fixed $\lambda$ is, generally, not closed under operation of commutation. 
There is, however, a remarkable case where it does.

\begin{proposition}
The space of all $(\alpha, 0)$-derivations of an anticommutative algebra
forms a Lie algebra under operation of commutation.
\end{proposition}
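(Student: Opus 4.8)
The plan is to verify directly that the commutator $[D_1, D_2] = D_1 D_2 - D_2 D_1$ of two $(\alpha_1, 0)$- and $(\alpha_2, 0)$-derivations is again a derivation of the same type, i.e., an $(\alpha, 0)$-derivation for a suitable linear form $\alpha$. So I would start by writing down, for an anticommutative algebra $A$, the defining identity (\ref{der}) with $\lambda = 0$:
$$
D_i(ab) = D_i(a) b + a D_i(b) + \alpha_i(b) a - \alpha_i(a) b, \qquad i = 1, 2.
$$
Then apply $D_1$ to $D_2(ab)$ using this formula, expand, and subtract the symmetric expression with the indices swapped.

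The key computational step is the expansion of $D_1 D_2(ab)$. Applying $D_1$ to the four terms of $D_2(ab)$ gives $D_1(D_2(a) b)$, $D_1(a D_2(b))$, $\alpha_2(b) D_1(a)$, and $-\alpha_2(a) D_1(b)$; the first two are expanded again via (\ref{der}) with $\lambda=0$. Collecting terms, the "principal" pieces $D_1 D_2(a) b + a D_1 D_2(b)$ appear, together with cross terms $D_1(a) D_2(b) + D_2(a) D_1(b)$ which are manifestly symmetric in the two indices and hence cancel upon antisymmetrization. What remains on the $[D_1,D_2]$ side, after cancellation, should be a combination of terms of the shape $(\text{scalar}) \cdot a$ and $(\text{scalar}) \cdot b$ and $(\text{scalar}) \cdot D_i(a)$, $(\text{scalar})\cdot D_i(b)$. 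I expect the terms involving $D_i(a)$ and $D_i(b)$ with bare-scalar coefficients $\alpha_j(\cdot)$ to survive — specifically something like $\alpha_2(b) D_1(a) - \alpha_2(a) D_1(b) - \alpha_1(b) D_2(a) + \alpha_1(a) D_2(b)$ plus the analogous terms coming from differentiating the $\alpha_i$-terms inside — and this is exactly where one must be careful.

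The main obstacle, and the crux of the argument, is precisely controlling those residual terms: a genuine $(\alpha, 0)$-derivation identity for $[D_1, D_2]$ may only hold up to terms of the form $\alpha_2(b) D_1(a) - \alpha_1(b) D_2(a) - (a \leftrightarrow b)$, and one needs these to vanish or to be reabsorbable. Here I would use the freedom in choosing $\alpha$: set $\alpha(a) = \alpha_1(D_2(a)) - \alpha_2(D_1(a))$ (the natural candidate obtained by applying the forms to the derivations), and check that with this choice the leftover cross-terms collapse. If stubborn terms of the type $\alpha_i(a) D_j(b)$ genuinely remain, one invokes anticommutativity once more — noting that in an anticommutative algebra $[a,a]=0$ forces certain bilinear expressions to be skew, which kills symmetric remainders — to finish. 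Once the residue is shown to vanish, the computation exhibits $[D_1, D_2]$ as satisfying (\ref{der}) with $\lambda = 0$ and with $\alpha$ as defined above, which is the claim.

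Finally I would remark that closure under the bracket, together with the obvious bilinearity and the Jacobi identity inherited from $\mathrm{End}(A)$ (the bracket of endomorphisms is always a Lie bracket), shows that the set of $(\alpha, 0)$-derivations is a Lie subalgebra of $\mathfrak{gl}(A)$, completing the proof.
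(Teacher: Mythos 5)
Your proposal is correct and follows essentially the same route as the paper's own (one-line) proof: a direct expansion showing that $[D_1,D_2]$ is an $(\alpha_1\circ D_2-\alpha_2\circ D_1,\,0)$-derivation, which is exactly the candidate $\alpha$ you propose. The residual cross-terms you worry about, namely $\alpha_2(b)D_1(a)-\alpha_2(a)D_1(b)+\alpha_1(b)D_2(a)-\alpha_1(a)D_2(b)$, in fact occur identically in the expansions of both $D_1D_2(ab)$ and $D_2D_1(ab)$ and therefore cancel outright upon antisymmetrization, so no further appeal to anticommutativity is needed.
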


\begin{proof}
Direct calculation shows that if $D_1$ is an $(\alpha_1, 0)$-derivation and 
$D_2$ is an $(\alpha_2, 0)$-derivation,
then $[D_1, D_2]$ is an $(\alpha_1 \circ D_2 - \alpha_2 \circ D_1, 0)$-derivation.
\end{proof}

This Lie algebra contains an algebra of (ordinary) derivations of $A$.
$(\alpha, 0)$-derivations correspond to the case where $A$ is an ideal
of codimension $1$.

Note that our definition of $(\alpha, \lambda)$-derivations looks somewhat similar 
to some other definitions of generalized derivations of Lie and associative algebras:
generalized derivations in the sense of Leger and Luks, i.e.,
triples $(D_1, D_2, D_3)$ of endomorphisms of an algebra $A$ 
such that $D_1(ab) = D_2(a)b + aD_3(b)$ for any $a,b \in A$
(see \cite{leger-luks})
and generalized derivations in the sense of Nakajima, i.e., pairs $(D, u)$
of an endomorphism $D$ of an algebra $A$ and an element $u\in A$ such that 
$D(ab) = D(a)b + bD(a) + aub$ for any $a,b\in A$
(see, for example, \cite{komatsu-nakajima} and \cite{argac-albas}).
However, this does not go much beyond superficial similarity in formulae:
in general, $(\alpha, \lambda)$-derivations seem to intersect trivially with 
generalized derivations in either sense.

We are interested in $(\alpha, \lambda)$-derivations of Lie algebras. 
In that case, due to (\ref{mult-1}), $\lambda$ vanishes on the commutant of an algebra.

\begin{lemma}\label{der-lie-algs}
Let $L$ be a finite-dimensional Lie algebra and $D$ its $(\alpha, \lambda)$-derivation.
Then one of the following holds:
\begin{enumerate}
\item $\dim L \le 3$.
\item $\alpha = 0$.
\item $Ker\,\alpha$ is a subalgebra of $L$ of codimension $1$ and one of the following holds:
      \begin{enumerate}
      \item $L = A \oplus Kx$, $A$ is abelian, $\ad x: A \to A$ is any linear map; $Ker\,\alpha = A$.
      \item $L$ is the direct sum of an abelian Lie algebra $A$ and the two-di\-men\-si\-o\-nal nonabelian 
            Lie algebra $\langle x, y \,|\, [x,y] = y \rangle$; $Ker\,\alpha = A \oplus Kx$.
      \item $L = A \oplus Kx \oplus Ky$, $A$ is abelian, $\ad x: A \to A$ is the identity map,
            $\ad y: A \to A$ is any linear map, and $[x,y]\in A$; $Ker\,\alpha = A \oplus Kx$.
      \item $L = A \oplus Kx \oplus Ky$, $A$ is abelian, $\ad x: A \to A$ is the identity map, 
            $\ad y: A \to A$ is the zero map, $[x,y] = a + \sigma y$ for some $a\in A$, 
            $\sigma \in K$, $\sigma \ne 0$; $Ker\,\alpha = A \oplus Kx$.
      \end{enumerate}
\end{enumerate}
\end{lemma}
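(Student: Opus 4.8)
The plan is to analyze the defining relation (\ref{der}) for an $(\alpha,\lambda)$-derivation $D$ of a finite-dimensional Lie algebra $L$, assuming $\dim L\ge 4$ and $\alpha\ne 0$, and extract the constraints that force $L$ into one of the four shapes listed in case (iii). The starting point is that $L$ is in particular a multiplicative $\omega$-Lie algebra via $\omega(x,y)=\lambda([x,y])$ (cf.\ (\ref{mult-1}) and Lemma \ref{lemma-codim1-mult}), together with the extended algebra $L\oplus Kv$ with $[x,v]=D(x)+\lambda(x)v$ and $\omega(x,v)=\alpha(x)$; so structural results about $\omega$-Lie algebras apply to $E:=L\oplus Kv$, and $L$ is a Lie ideal of codimension $1$ in $E$.

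First I would pin down $Ker\,\alpha$. Swapping $a,b$ in (\ref{der}) and using anticommutativity shows the relation is consistent; the real leverage comes from plugging in three elements cyclically — equivalently, from observing that $E$ is an $\omega$-Lie algebra whose $\omega$ has $Ker\,\omega\cap L=Ker\,\lambda$ and with $v$ pairing against $L$ through $\alpha$. Since $\dim E\ge 5$, I would invoke Lemma \ref{xy}: if $rank(\omega)$ on $E$ were $\ge 4$ we would get strong vanishing forcing $Ker\,\alpha$ (which has codimension $1$ in $L$, hence codimension $2$ in $E$, and lies in a hyperplane on which $\omega$ is controlled) to be a Lie subalgebra with further constraints; and in fact $rank$ must be exactly $2$ here because $\alpha\ne0$ while $\lambda$ vanishes on $[L,L]$, so the nondegenerate part of $\omega$ on $E$ is spanned by $v$ and (essentially) one direction. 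This is how one gets that $Ker\,\alpha$ is a subalgebra of codimension $1$ in $L$; then I would set $H:=Ker\,\alpha$ and study the action of the remaining generator.

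Next, having $L=H\oplus Ky$ with $H$ a codimension-$1$ Lie subalgebra, I would apply Lemma \ref{der-lie-algs}'s own logic recursively is not available, so instead I would use that $H$ itself, being a hyperplane subalgebra of the multiplicative $\omega$-Lie algebra structure, is quite restricted: by Lemma \ref{xy}(i) and Lemma \ref{2-quasi}, $Ker\,\lambda\cap H$ (of small codimension) is abelian or almost abelian, and chasing (\ref{der}) with $a,b\in H$ shows $\ad y|_H$ is either the identity, the zero map, or more general, splitting the argument. The four sub-cases (a)–(d) then correspond to: $\lambda=0$ on $L$ (so $L$ is "ordinary-derivation-like", giving (a)); $\lambda\ne0$ with $[L,L]$ one-dimensional spanned by an eigenvector of $\ad y$ with eigenvalue $1$, subdivided according to whether the complementary generator acts trivially on $A$ and whether $[x,y]$ stays in $A$ or has a $y$-component — these give (b), (c), (d). I would finish by writing $L=A\oplus Kx\oplus Ky$ (or $A\oplus Kx$) with $A$ abelian, normalizing so that $\ad x|_A$ is the identity where a nonzero scaling is available, and reading off exactly the four normal forms.

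The main obstacle I anticipate is the bookkeeping in separating sub-cases (c) and (d): both have $A$ abelian, $\ad x=\mathrm{id}_A$, $\ad y=0$ on $A$ (in (d)) versus $\ad y$ arbitrary (in (c)), and the distinction hinges on whether the bracket $[x,y]$ has a nonzero $y$-component $\sigma y$, which cannot be scaled away without disturbing the normalization $\ad x|_A=\mathrm{id}$. Keeping the derivation relation (\ref{der}) compatible with these normalizations — in particular checking that $D$, defined by $D(x)=[x,v]$ projected to $L$, respects the Jacobi/$\omega$-Jacobi constraint (\ref{pre-der}) in each sub-case rather than producing a spurious fifth family — is the delicate point; the low-dimensional exceptions ($\dim L\le 3$) are precisely where this case analysis degenerates, which is why they are quarantined into case (i).
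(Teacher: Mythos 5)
Your route is essentially sound but genuinely different from the paper's at the key step. The paper never leaves the derivation formalism: it applies $D$ to the Jacobi identity to obtain the identity (\ref{basic}), reads off from it that (for $\dim L>3$ and $\alpha\ne 0$) $Ker\,\alpha$ is a subalgebra in which any two linearly independent elements generate a two-dimensional subalgebra, and then invokes Lemma \ref{2-quasi}. You instead pass to the extension $E=L\oplus Kv$ and use the machinery of \S 5. That does work: since $\lambda$ vanishes on $[L,L]$ (the standing assumption recorded just before the lemma), the form $\omega(x,y)=\lambda([x,y])$ vanishes identically on $L\times L$, so on $E$ one has $rank(\omega)=2$ and $Ker\,\omega=Ker\,\alpha$ exactly; Lemma \ref{xy}(i) then gives $[x,y]\in Kx+Ky$ for all $x,y\in Ker\,\alpha$, whence $Ker\,\alpha$ is a subalgebra to which Lemma \ref{2-quasi} applies. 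What your approach buys is economy (no need to derive (\ref{basic})), at the price of leaning on $\lambda([L,L])=0$ to make $E$ an $\omega$-Lie algebra at all --- a hypothesis the paper's computation also uses implicitly but which is not literally in the statement; what the paper's approach buys is self-containedness and the clean dichotomy ``either $\dim Ker\,\alpha\le 2$, hence $\dim L\le 3$, or $Ker\,\alpha$ is a subalgebra''.

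Two slips to repair. First, $Ker\,\omega\cap L$ is not $Ker\,\lambda$, and the object you must feed to Lemma \ref{2-quasi} is all of $Ker\,\alpha$, not $Ker\,\lambda\cap Ker\,\alpha$ (the latter could have codimension $2$ in $L$ and would not suffice); the correct statement is that $\omega$ is identically zero on $L\times L$, so $Ker\,\omega|_E$ is precisely $Ker\,\alpha$. Second, your dictionary for the sub-cases is off: the split between (a) and (b)--(d) is governed by whether $Ker\,\alpha$ is abelian or almost abelian, not by whether $\lambda=0$, and in case (c) the commutant need not be one-dimensional. The list (iii) is nothing more than the enumeration of Lie algebras possessing an abelian or almost abelian hyperplane subalgebra, with the various placements of $Ker\,\alpha$ accounted for; the lemma does not claim each item actually carries a $D$ with $\alpha\ne 0$, so your worry about a ``spurious fifth family'' arising from compatibility with (\ref{der}) is moot.
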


\begin{proof}
Applying $D$ to the Jacobi identity, we get:
\begin{align}
   \alpha(z)[x,y] + \alpha(x)[y,z] &+ \alpha(y)[z,x]                \notag        \\ 
&+ (\alpha([y,z]) + \lambda(y)\alpha(z) - \lambda(z)\alpha(y))x     \label{basic} \\
&+ (\alpha([z,x]) + \lambda(z)\alpha(x) - \lambda(x)\alpha(z))y     \notag        \\
&+ (\alpha([x,y]) + \lambda(x)\alpha(y) - \lambda(y)\alpha(x))z = 0 \notag
\end{align}
for any $x,y,z\in L$.

Assuming in (\ref{basic}) $x,y,z\in Ker\,\alpha$, we get that
either $\dim Ker\,\alpha \le 2$ and hence $\dim L \le 3$, or 
$\alpha([Ker\,\alpha, Ker\,\alpha]) = 0$ and hence $Ker\,\alpha$ is a subalgebra of $L$. 
Thus, assuming $\dim L > 3$, either $\alpha = 0$ or $Ker\,\alpha$ is a subalgebra of $L$ 
of codimension $1$.

Now, taking in (\ref{basic}) $x,y \in Ker\,\alpha$, $z\in L$ such that $\alpha(z) = 1$, we get:
$$
[x,y] = (\alpha([z,y]) - \lambda(y))x + (\alpha([x,z]) + \lambda(x))y.
$$
Thus $Ker\,\alpha$ is a Lie algebra such that any two linearly independent elements in it
generate a two-dimensional subalgebra. By Lemma \ref{2-quasi}, 
$Ker\,\alpha$ is either abelian of almost abelian.
Now straightforward computations produce the list (iii) of Lie algebras having a subalgebra
of codimension $1$ which is either abelian or almost abelian
(note that not all algebras in this list are pairwise non-isomorphic; 
we have accounted also for different possibilities of $Ker\,\alpha$).
\end{proof}

Lie algebras listed in part (iii) may have many 
$(\alpha, \lambda)$-derivations, and they do not appear to allow description in a nice compact form.
For example, consider the algebra in (iiia) with non-nilpotent $\ad x$ and suppose
the ground field is algebraically closed. Let $F$
be an eigenvector corresponding to a nonzero eigenvalue $\sigma$ of $\ad x$ in a Lie algebra 
$End(A)$: $[F, \ad x] = \sigma F$. Then $D \in End(L)$ defined by $D(a) = F(a) + a, a\in A$
and $D(x) = 0$, is an $(\alpha, \lambda)$-derivation for $\alpha$ defined by 
$\alpha(A) = 0$, $\alpha(x) = \sigma$ and $\lambda$ defined by $\lambda(A) = 0, \lambda(x) = -\sigma$.
This provides an example of an $\omega$-Lie algebra which is not a Lie algebra 
in any finite dimension $\ge 3$.

Nevertheless, writing, for each of these cases, the generic 
$(\alpha,\lambda)$-derivations
in terms of linear functions on the corresponding space $A$, the elementary but tedious
linear-algebraic considerations, much in the spirit of 
Lemmata \ref{quasi}, \ref{2-quasi} above, and Lemma \ref{simple-codim2} below,
show that one always can choose a nonzero abelian ideal 
in the corresponding $\omega$-Lie algebra. That leads to

\begin{corollary}\label{simple-codim-1}
A finite-dimensional semisimple $\omega$-Lie algebra with a Lie subalgebra of codimension $1$ is either a Lie
algebra, or has dimension $\le 4$.
\end{corollary}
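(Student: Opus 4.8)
The plan is to use Lemma \ref{main-lemma} as the skeleton and discard cases (ii) and (iii) immediately: case (ii) gives $Ker\,\omega$ of codimension $2$, hence an abelian (or almost abelian) ideal of codimension $2$, so $L$ is not semisimple; case (iii) puts $L$ as an abelian extension of a simple $\omega$-Lie algebra, and the kernel of that extension is a nonzero abelian ideal, again contradicting semisimplicity. So the only case to analyze is (i): $L = A \oplus Kv$ with $A$ a Lie subalgebra of codimension $1$, and by Lemma \ref{lemma-codim1-mult} $A$ is a multiplicative $\omega$-Lie algebra, i.e.\ a Lie algebra, carrying an $(\alpha, \lambda)$-derivation $D$ with $\omega(x,v) = \alpha(x)$, $\lambda$ vanishing on $[A,A]$, and the structure of $L$ recovered by (\ref{mult-2}).

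Next I would invoke Lemma \ref{der-lie-algs} applied to the Lie algebra $A$ (here one must check $\dim A \ge 4$, which follows since $\dim L \ge 5$; the cases $\dim L \le 4$ are allowed by the statement). If $\alpha = 0$, then $\omega(A,v) = 0$, and since $\omega(A,A)$ is controlled by (\ref{mult-1}) with $\lambda|_{[A,A]} = 0$, one gets $\omega = 0$ unless $\lambda([A,A]) \ne 0$ — but that forces a one-dimensional ideal situation; more carefully, if $\alpha = 0$ then $Ka$ being outside $A$... rather, the cleaner route: $\alpha = 0$ means $v \in Ker\,\omega$, and then one checks $L$ either is a Lie algebra or has a small abelian ideal. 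The substantive work is the remaining possibility: $\dim A > 3$ and $Ker\,\alpha$ is a codimension-$1$ subalgebra of $A$ of one of the four types (iiia)--(iiid) in Lemma \ref{der-lie-algs}, each of which is built around an abelian Lie algebra $A'$ of codimension $\le 2$ inside $A$ (hence codimension $\le 3$ in $L$).

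The heart of the argument, and the main obstacle, is exactly the step the paper flags in the paragraph before the corollary: for each of the four shapes of $A$, one must write down the \emph{generic} $(\alpha,\lambda)$-derivation $D$ of $A$ (expanding $D$ on the abelian part $A'$, on $x$, and — in cases (iiic),(iiid) — on $y$, in terms of arbitrary linear functionals and scalars subject to the constraints imposed by (\ref{der})), reconstruct the resulting $\omega$-Lie algebra $L = A \oplus Kv$ via (\ref{mult-2}), and then exhibit \emph{inside $L$} a nonzero abelian ideal. The natural candidate is a codimension-small subspace of $A'$ (or $A' \cap Ker\,\alpha$ intersected with the kernels of the relevant adjoint maps), and the claim is that one can always trim $A'$ by boundedly many dimensions to kill all the off-diagonal action and land in an abelian ideal; this is "elementary but tedious linear algebra" of the same flavor as Lemmata \ref{quasi}, \ref{2-quasi} and Lemma \ref{simple-codim2}. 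Once a nonzero abelian ideal exists in dimension $> 4$, $L$ fails to be semisimple, contradiction — so in the semisimple case with a codimension-$1$ Lie subalgebra we must have $\dim L \le 4$ or $L$ a Lie algebra, which is the assertion.

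I expect the bookkeeping in the four subcases to be the only real difficulty; conceptually there is nothing deep, but verifying that the abelian ideal survives for \emph{every} choice of the free parameters in $D$ (and every allowed position of $Ker\,\alpha$) is where care is needed. One mild subtlety worth isolating as a sub-step: in the example the paper gives just before the corollary — case (iiia) with non-nilpotent $\ad x$ and $D(a) = F(a) + a$ — one sees $\omega$-Lie structures in every dimension, so the abelian ideal there is genuinely a proper subspace of $A$ (the sum of the non-unit generalized eigenspaces of the relevant operator, or a common kernel), not all of $A$; this shows why the bound is "$\le 4$" rather than something smaller and confirms that the ideal-extraction must be done by an eigenspace/kernel argument rather than by taking $A$ itself.
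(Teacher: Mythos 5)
Your argument is essentially the paper's own: reduce to the codimension-$1$ situation, apply Lemma \ref{der-lie-algs} to the Lie subalgebra $A$, and for each of the four types in part (iii) write the generic $(\alpha,\lambda)$-derivation and extract a nonzero abelian ideal of $L$ contradicting semisimplicity — the paper likewise leaves this final ``elementary but tedious'' linear-algebraic step unperformed, so your proposal sits at the same level of detail as the source. One caution: your opening reduction via Lemma \ref{main-lemma} is superfluous here (the hypothesis already places you in its case (i)) and your dismissal of its case (ii) is not valid as stated, since $Ker\,\omega$ of codimension $2$ is only guaranteed to be a subalgebra, not an ideal — which is precisely why the paper needs the separate Lemma \ref{simple-codim2} to handle that case.
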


An alternative proof of this statement might be obtained following the approach which 
was used by Amayo in a description of simple Lie algebras with a subalgebra of codimension $1$ in 
\cite[Part II]{amayo}, but goes back to Weisfeiler at the end of 1960s.
Let $L$ be an $\omega$-Lie algebra of dimension $>3$ with a Lie subalgebra $L_0$ of 
codimension $1$. Since $\dim L_0 \ge 3$, $\omega (L_0,L_0) = 0$. Put $L_{-1} = L$ and
define a filtration on $L$ by 
\begin{equation}\label{filtr}
L_{i+1} = \set{x\in L_i}{[x,L] \subseteq L_i}, \quad i \ge 0 .
\end{equation}
The same reasonings as in the Lie-algebraic case, allow one to establish some of the 
properties of this filtration (for example, that each nonzero term has
codimension $1$ in the preceding term), and the additional condition 
$[[L_0,L_0],[L_0,L_0]] = 0$, which holds by inspection of all Lie algebras listed in 
Lemma \ref{der-lie-algs}(iii), could be used to finish the proof.

Now let us consider some examples of $(\alpha, 0)$-derivations of low-dimensional Lie algebras.
It is obvious that for the $2$-dimensional abelian Lie algebra, all $(\alpha, 0)$-derivations 
are ordinary derivations.
Direct easy calculation shows that if $L$ is the $2$-dimensional nonabelian Lie algebra,
then the Lie algebra of its $(\alpha, 0)$-derivations coincides with $End(L)$. If $x, y$ are 
basic elements of $L$ such that $[x,y] = x$, then the linear transformation given by matrix
$$\begin{pmatrix}
a & b \\
c & d
\end{pmatrix}$$
in that basis is not a derivation if and only if $bd \ne 0$ (and then $\alpha(x) = -b,
\alpha(y) = -d$).

Direct calculation shows that the Lie algebra of $(\alpha, 0)$-derivations of $sl(2)$
is $5$-di\-men\-si\-o\-nal and isomorphic to the semidirect sum of $sl(2)$ and its $2$-dimensional
standard module. In the basis 
$$
\set{e,f,h}{[e,h] = -e, [f,h] = f, [e,f] = h} ,
$$
the basic 
$(\alpha, 0)$-derivations which are not ordinary derivations can be chosen
as 
$$\begin{aligned}
e &\mapsto 0,& f &\mapsto 0,& h &\mapsto e&  \\
e &\mapsto 0,& f &\mapsto 0,& h &\mapsto f ,&
\end{aligned}$$
the corresponding $\alpha$'s being
$$\begin{aligned}
e &\mapsto 0,& f &\mapsto -1,& h &\mapsto 0&   \\
e &\mapsto 1,& f &\mapsto 0,&  h &\mapsto 0 ,&
\end{aligned}$$
respectively.
This provides examples of $4$-dimensional $\omega$-Lie algebras which are not Lie 
algebras. Note that since $\lambda = 0$, these algebras are not simple.

On the other hand, any nonzero $(\alpha,\lambda)$-derivation with $\lambda \ne 0$
of a $3$-dimensional simple $\omega$-Lie non-Lie algebra $A$, gives rise to a simple 
$4$-dimensional $\omega$-Lie algebra. Indeed, if $L$ = $A \oplus Kv$ is such an algebra
with multiplication (\ref{mult-2}), and $I$ is a proper ideal of $L$, then, due to simplicity 
of $A$, $I \cap A = 0$, hence we may write $I = Kv$, which implies $D=0$, a contradiction.

There are many such derivations of $3$-dimensional simple
$\omega$-Lie algebras. This can be verified with the aid of
a simple computer program described in Appendix. 
One such example is given in \S \ref{sect-main}.

\section{Kernel of codimension 2}

Now we can see that in Lemma \ref{main-lemma}, the case (ii) essentially covers,
up to algebras of small dimension, the case (i):

\begin{lemma}
Let $L$ be a finite-dimensional $\omega$-Lie algebra with a Lie subalgebra of 
codimension $1$. Then one of the following holds:
\begin{enumerate}
\item $L$ is a Lie algebra.
\item $\dim L = 3$.
\item $\codim Ker\,\omega = 2$.
\end{enumerate}
\end{lemma}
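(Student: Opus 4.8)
The plan is to combine the structural data on $(\alpha,\lambda)$-derivations from \S\ref{sec-der} with Lemma \ref{xy}, reducing everything to a short case analysis. Write $L = A \oplus Kv$ with $A$ a Lie subalgebra of codimension $1$, so that (by the discussion following the definition of $(\alpha,\lambda)$-derivation) $A$ is multiplicative with $\omega|_A$ given by (\ref{mult-1}), the map $D$ of (\ref{mult-2}) is an $(\alpha,\lambda)$-derivation of $A$ with $\alpha(x) = \omega(x,v)$, and $L$ is not a Lie algebra, i.e. $\alpha \neq 0$ or $\lambda([A,A]) \neq 0$. If $\dim L \le 3$ we are in case (ii), so assume $\dim L \ge 4$, hence $\dim A \ge 3$; then by Lemma \ref{ideals} (or directly, since $A$ is isotropic of codimension $1$) $\omega(A,A) = 0$, so by (\ref{mult-1}) we have $\lambda([A,A]) = 0$, forcing $\alpha \neq 0$.

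Now apply Lemma \ref{der-lie-algs} to the Lie algebra $A$ and its $(\alpha,\lambda)$-derivation $D$. Since $\dim A \ge 3$ and $\alpha \neq 0$, we land in case (iii) of that lemma: $Ker\,\alpha$ is a subalgebra of $A$ of codimension $1$, hence of codimension $2$ in $L$, and $A$ is one of the four explicit algebras (iiia)--(iiid). In each of these, $A$ has an abelian ideal $B := Ker\,\alpha \cap (\text{abelian part})$ — concretely, $B = A$ in case (iiia) and $B$ is the "$A$-part" in cases (iiib)--(iiid) — which is abelian of codimension $2$ in $L$ and, crucially, contained in $Ker\,\alpha = Ker\,\omega(\cdot,v)$. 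Combined with $\omega(A,A) = 0$ this gives $\omega(B, A) = 0$ and $\omega(B,v) = 0$, i.e. $B \subseteq Ker\,\omega$.

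The task then reduces to showing $Ker\,\omega$ has codimension exactly $2$, i.e. that $Ker\,\omega = B$ (which has codimension $2$), rather than something larger. Since $L$ is not a Lie algebra, $Ker\,\omega \neq L$; since $\dim L \ge 4 > 3$, by Lemma \ref{xy}(i) $rank(\omega) \ne 2$ would only help if we knew more, so instead I argue directly: if $rank(\omega) \ge 4$, then by Lemma \ref{xy}(ii) $Ker\,\omega$ is an ideal, and by Corollary \ref{ideal} it is a Lie algebra, so $\omega$ induces a form on $L/Ker\,\omega$ making it a non-Lie $\omega$-Lie algebra with nondegenerate induced form — but a non-Lie $\omega$-Lie algebra has dimension $\ge 3$ and one checks (using that $B \subseteq Ker\,\omega$ and $B$ has codimension $2$) that $\dim L/Ker\,\omega \le 2$, a contradiction. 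Hence $rank(\omega) = 2$, i.e. $Ker\,\omega$ has codimension $2$, which is case (iii).

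\textbf{The main obstacle} I anticipate is the bookkeeping in the middle step: verifying, uniformly across the four algebras of Lemma \ref{der-lie-algs}(iii), that $Ker\,\omega$ coincides with the codimension-$2$ abelian ideal and does not accidentally grow — in particular pinning down $\omega(v,v)$-type entries and the values of $\omega$ on the two "extra" basis directions $x,y$ versus $v$. This is the kind of elementary but fiddly linear algebra the paper elsewhere relegates to "straightforward computation"; the key leverage is that $\alpha(x) = \omega(x,v)$ is forced to be nonzero on exactly a codimension-$1$ subspace of $A$, which rigidly locates $Ker\,\omega$ once $\omega(A,A) = 0$ is in hand.
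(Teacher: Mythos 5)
Your first paragraph is correct and in fact already contains everything needed to finish: once you know $\omega(A,A)=0$ and $\alpha\ne 0$, the kernel can be computed outright. (One quibble on the justification: Lemma \ref{ideals} does not apply, since $A$ is only a subalgebra and not assumed to be an ideal, and ``$A$ is isotropic'' is what you are trying to prove; the correct reason is that $A$ is a Lie subalgebra of dimension $\ge 3$, so the $\omega$-Jacobi identity applied to three linearly independent elements of $A$ has vanishing left-hand side and forces $\omega$ to vanish on $A$.) With $\omega(A,A)=0$ and $\omega(x,v)=\alpha(x)$ in hand, an element $a+cv$ with $c\ne 0$ cannot lie in $Ker\,\omega$, since $\omega(a+cv,y)=-c\,\alpha(y)$ for $y\in A$ and $\alpha\ne 0$; and $a\in A$ lies in $Ker\,\omega$ if and only if $\alpha(a)=0$. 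Hence $Ker\,\omega=Ker\,\alpha$, of codimension $1$ in $A$ and therefore $2$ in $L$. This direct identification is exactly the paper's (one-sentence) proof.

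The detour through Lemma \ref{der-lie-algs}(iii) and the subspace $B$ introduces two genuine errors. First, when $\dim L=4$ you have $\dim A=3$, and Lemma \ref{der-lie-algs} then permits its case (i), so you are not entitled to conclude you are in its case (iii); the explicit list (iiia)--(iiid) is simply not available in that dimension. Second, in cases (iiib)--(iiid) the abelian part has codimension $2$ in $A$, hence codimension $3$ in $L$, so your $B$ does \emph{not} have codimension $2$, and the asserted bound $\dim L/Ker\,\omega\le 2$ does not follow as stated; it can only be rescued by invoking the evenness of the rank of a skew-symmetric form, which you never do. The subsequent appeal to Lemma \ref{xy}(ii) and the quotient $L/Ker\,\omega$ is then both built on this faulty step and unnecessary. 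Both problems disappear if you drop the detour and finish directly from your own first paragraph as above.
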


\begin{proof}
This follows immediately from the results of the previous section. Indeed,
such $\omega$-Lie algebras are described by $(\alpha, \lambda)$-derivations of Lie algebras
listed in Lemma \ref{der-lie-algs}, with $Ker\,\omega = Ker\,\alpha$.
\end{proof}

In the opposite direction we have:

\begin{lemma}\label{codim-2}
Let $L$ be a finite-dimensional $\omega$-Lie algebra with $Ker\,\omega$ of codimension $2$.
Then one of the following holds:
\begin{enumerate}
\item $\dim L = 3$.
\item $L$ has a Lie subalgebra of codimension $1$.
\item $Ker\,\omega$ is almost abelian, with the abelian part acting nilpotently on $L$.
\end{enumerate}
\end{lemma}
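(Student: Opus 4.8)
The plan is to organize everything around the structure of $H:=Ker\,\omega$. Since $rank(\omega)=\codim H=2$, Lemma~\ref{xy}(i) gives $[x,y]\in Kx+Ky$ for all $x,y\in H$, so by Lemma~\ref{2-quasi} the Lie algebra $H$ is abelian or almost abelian. The small dimensions are trivial: if $\dim L=2$ then $H=0$ and any line is a codimension-$1$ Lie subalgebra (case (ii)); if $\dim L=3$ we are in case (i). So assume $\dim L\ge 4$, hence $\dim H\ge 2$, and treat the two shapes of $H$ separately.

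Suppose first $H$ is abelian. I claim $\ad H$ acts nilpotently on $L$. Extend scalars to $\bar K$ (Proposition~\ref{ext}) and form the root-space decomposition of $L$ with respect to $\ad H$; note $H\subseteq L_0$ since $\ad H$ kills $H$. If $\alpha\ne 0$ were a root, Lemma~\ref{roots}(i) gives $\omega(L_\alpha,L_\beta)=0$ whenever $\alpha+\beta\ne 0$; were also $\omega(L_\alpha,L_{-\alpha})=0$ we would get $L_\alpha\subseteq Ker\,\omega=H\subseteq L_0$, impossible, so $\omega(L_\alpha,L_{-\alpha})\ne 0$. Then the bound $\sum_{\beta\ne 0}\dim L_\beta\le\dim L-\dim H=2$ forces $\dim L_\alpha=\dim L_{-\alpha}=1$, $\pm\alpha$ to be the only nonzero roots, and $L_0=H$. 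Picking nonzero $u\in L_\alpha$, $v\in L_{-\alpha}$ and writing \eqref{jacobi} for the triple $h,u,v$ with $h\in H$: the two ``off-diagonal'' terms on the right vanish because $h\in Ker\,\omega$, the term $[[u,v],h]$ vanishes because $[u,v]\in[L_\alpha,L_{-\alpha}]\subseteq L_0=H$ (Lemma~\ref{roots}(ii)) and $H$ is abelian, and the two remaining left-hand terms cancel --- leaving $\omega(u,v)\,h=0$ for every $h\in H$, a contradiction. Hence $\ad H$ has no nonzero root, i.e.\ it is nilpotent. Now $\{\ad h\mid h\in H\}$ is a commuting family (by \eqref{jacobi} applied inside $Ker\,\omega$) of nilpotent operators stabilizing $H$, so it acts nilpotently on the nonzero space $L/H$; by the elementary Engel argument there is $w\in L\setminus H$ with $[w,H]\subseteq H$. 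Then $M:=H\oplus Kw$ is a subalgebra of codimension $1$, and $\omega$ vanishes on $M$ (because $\omega(H,L)=0$), so $M$ satisfies the Jacobi identity --- it is a Lie subalgebra of codimension $1$, which is case (ii).

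Suppose now $H$ is almost abelian, $H=A\oplus Kx_0$ with $A=[H,H]$ abelian and $[a,x_0]=a$ for $a\in A$; fix $a\in A$. Writing \eqref{jacobi} for the triple $a,x_0,z$ with $z\in L$ arbitrary, the right-hand side vanishes since $a,x_0\in Ker\,\omega$, and, using $[a,x_0]=a$, the identity collapses to $[\ad x_0,\ad a]=\ad a$ in $\operatorname{End}(L)$. Consequently $[\ad x_0,(\ad a)^k]=k\,(\ad a)^k$, so $\operatorname{tr}\!\big((\ad a)^k\big)=\tfrac1k\operatorname{tr}[\ad x_0,(\ad a)^k]=0$ whenever $k$ is invertible in $K$; in characteristic $0$ this holds for all $k\ge 1$, and Newton's identities force $\ad a$ to be nilpotent. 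Since the operators $\ad a$, $a\in A$, also commute pairwise (once more by \eqref{jacobi} applied inside $Ker\,\omega$), the abelian part $A$ acts nilpotently on $L$ --- this is case (iii).

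The crux is the two nilpotency statements, and the genuine obstacle is confined to the almost abelian case in positive characteristic: the trace computation is conclusive only when $\operatorname{char}K=0$ (or $\operatorname{char}K>\dim L$), and for the remaining characteristics one must argue separately --- either $\ad A$ is still nilpotent, giving (iii), or it is not, in which case one has to instead exhibit a codimension-$1$ Lie subalgebra and land in (ii), and producing that subalgebra is the delicate point. By contrast the extension of scalars used in the abelian case is harmless, since nilpotency of $\ad H$ and the Engel construction of $M$ descend to $K$ unchanged.
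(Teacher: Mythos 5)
Your abelian case is correct, and it is essentially the paper's own argument run with respect to $H=Ker\,\omega$: the dimension count $\sum_{\beta\ne 0}\dim L_\beta\le\codim H=2$ together with Lemma \ref{roots}(i),(ii) forces $L_0=H$ and two one-dimensional root spaces $L_{\pm\alpha}$, your computation with the triple $h,u,v$ is exactly what underlies Lemma \ref{roots}(iv), and the Engel step then yields the codimension-one Lie subalgebra. The gap is where you say it is, and it is genuine: the standing hypothesis of the paper is only $\operatorname{char}K\ne 2,3$, so for instance characteristic $5$ with $\dim L\ge 5$ is in scope, and there the relations $\operatorname{tr}\bigl((\ad a)^k\bigr)=0$ for $k$ prime to $p$ do not force nilpotency of $\ad a$ (already $\operatorname{tr}(\operatorname{id}^k)=0$ in dimension $p$), so Newton's identities are unavailable. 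As written, your argument proves the lemma only when $\operatorname{char}K=0$ or $\operatorname{char}K>\dim L$.

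The paper closes the almost abelian case by running the same Fitting/root-space machinery you used for abelian $Ker\,\omega$, but with respect to the abelian part $A$ alone --- and, crucially, it does \emph{not} claim that $A$ always acts nilpotently. Let $L=L_0\oplus\bigoplus_\alpha L_\alpha$ be the decomposition with respect to $\ad A$ (note $\dim A\ge 2$ in the problematic range $5\le p\le\dim L$, so Lemma \ref{roots} applies); then $Ker\,\omega\subseteq L_0$ and $L_0$ is a subalgebra by Lemma \ref{roots}(ii). If $L_0=L$ you are in case (iii). If $Ker\,\omega\subsetneq L_0\subsetneq L$, then $L_0$ has codimension one and $\omega$ vanishes on it, so it is a Lie subalgebra and you land in case (ii) --- this is the escape route your sketch was missing. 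Finally $L_0=Ker\,\omega$ is impossible: a single two-dimensional nonzero root space would lie in $Ker\,\omega$ by Lemma \ref{roots}(i), while if there are two one-dimensional root spaces, say $\overline L_\alpha=\overline Kx$, then $[x,x_0]=\lambda x$ by Lemma \ref{roots}(ii), and the $\omega$-Jacobi identity for $x,x_0,h$ with $h\in A$ (whose right-hand side vanishes since $x_0,h\in Ker\,\omega$) collapses to $\alpha(h)x=0$, contradicting $\alpha\ne 0$. So in positive characteristic the correct conclusion of your second half is the dichotomy ``(ii) or (iii)'', not unconditional nilpotency of $\ad A$.
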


\begin{proof}
By \S 5, $Ker\,\omega$ is abelian or almost abelian. 
In the latter case, write $Ker\,\omega = H \oplus Ka$, $H$ is abelian, and 
$[h,a] = h$ for any $h\in H$.
For notational convenience, we will assume $H = Ker\,\omega$ in the case of abelian $Ker\,\omega$.

If $\dim H = 1$, we are in case (i) or (ii), so let $\dim H > 1$. 
Consider the Fitting decomposition of $L$ with respect to $H$: $L = L_0 \oplus L_1$,
and its refinement -- the root space decomposition of 
$\overline{L} = L\otimes_K \overline{K}$ over an algebraic closure $\overline{K}$
of the ground field $K$ with respect to $H\otimes_K \overline{K}$ (note that
$\overline{L}_0 = L_0\otimes_K \overline{K}$). Obviously, $Ker\,\omega \subseteq L_0$.

By Lemma \ref{roots}(ii), $\overline{L}_0$ is a subalgebra of $\overline{L}$, hence
$L_0$ is a subalgebra of $L$. 
Assume first that $L_0 \subsetneqq L$. 
If $Ker\,\omega \subsetneqq L_0$, then $L_0$ is a Lie subalgebra of $L$
of codimension $1$, and we are in case (ii). Hence we may assume $L_0 = Ker\,\omega$
and $\overline{L}_0 = Ker\,\overline{\omega} = Ker\,\omega\otimes_K \overline{K}$.

There is either one nonzero root space $\overline{L}_\alpha$ of dimension $2$, 
or two nonzero root spaces of dimension $1$. 
In the former case, by Lemma \ref{roots}(i),
$\omega(\overline{L}_\alpha, \overline{L}_\alpha) = 0$, hence 
$\overline{L}_\alpha \subseteq Ker\,\overline{\omega}$, a contradiction.
In the latter case, both root spaces are simple. If $Ker\,\omega = H$ is abelian, 
then by Lemma \ref{roots}(iv),
$\overline{L}$ is a Lie algebra, hence $L$ is a Lie algebra, a contradiction.
Suppose $Ker\,\omega$ is almost abelian and let $\overline{L}_{\alpha} = \overline{K}x$ 
be one of the root spaces. 
By Lemma \ref{roots}(ii), we may write $[x,a] = \lambda x$ for some $\lambda\in \overline{K}$. 
Writing the $\omega$-Jacobi identity for $x,a$ and $h\in H$, we get $\alpha(h) = 0$, 
a contradiction.

Now consider the case where $L_0 = L$, i.e. $H$ acts on $L$ nilpotently. 
Suppose $Ker\,\omega = H$ is abelian. $Ker\,\omega$ also acts nilpotently on any module
which is a quotient of $L$, in particular, on 
$L/Ker\,\omega$. Consequently, there is $x\notin Ker\,\omega$ such that the whole
$Ker\,\omega$ maps $x + Ker\,\omega \in L/Ker\,\omega$ to zero, and hence 
$[x,Ker\,\omega] \subseteq Ker\,\omega$. Then $Ker\,\omega \oplus Kx$
is a Lie subalgebra of $L$ of codimension $1$, and we are again in case (ii). 

The remaining case is when $Ker\,\omega$ is almost abelian and $H$ acts on 
$L$ nilpotently, and this is exactly the case (iii).
\end{proof}

Note that the case (iii) does not seem to be amenable to any compact classification.
However, like in \S \ref{sec-der}, we able to deal with simple algebras:

\begin{lemma}\label{simple-codim2}
A finite-dimensional semisimple $\omega$-Lie algebra satisfying the condition (iii) of Lemma 
\ref{codim-2}, has dimension $\le 4$.
\end{lemma}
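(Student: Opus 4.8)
The setup is that $L$ is finite-dimensional, not a Lie algebra, semisimple, with $\mathrm{Ker}\,\omega = H \oplus Ka$ of codimension $2$, where $H$ is abelian, $[h,a]=h$ for $h\in H$, and $H$ acts nilpotently on all of $L$. The plan is to show that a large such $L$ always contains a nonzero abelian ideal, contradicting semisimplicity, which forces $\dim L \le 4$. So assume $\dim L \ge 5$, hence $\dim H \ge 2$ (indeed $\dim H = \dim L - 2 \ge 3$), and in particular Lemma \ref{aux-1} and the root-space machinery of \S 5 apply (after extending the ground field, which is harmless by Proposition \ref{ext}, since a simple algebra stays simple over an extension — or one argues directly that the abelian ideal descends).

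First I would extract structural information from the nilpotent action of $H$. Since $H$ acts nilpotently on $L$, the Fitting-zero component is everything, $L_0 = L$; combined with Lemma \ref{roots}(ii) this just says $H$ acts by nilpotent operators, so for each $h\in H$, $\mathrm{ad}(h)$ is nilpotent on $L$. The key is to exploit Lemma \ref{aux-1}(i), $\omega([x,h],y) + \omega(x,[y,h]) = 0$: this says each $\mathrm{ad}(h)$ is skew-adjoint with respect to $\omega$, so $\omega$ restricted to the codimension-$2$ complement $V$ (a symplectic space of rank $= \dim L - 2$) is preserved, and the nilpotent operators $\mathrm{ad}(h)|_V$ lie in the symplectic Lie algebra. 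More usefully, I would look at $\mathrm{ad}(a)$: from $[h,a] = h$ we get that $\mathrm{ad}(a)$ acts on $H$ as $-\mathrm{id}$, so $H \subseteq [a, H] \subseteq [L,L]$, consistent with perfectness. Then, using the $\omega$-Jacobi identity for $x, h, h'$ with $h, h' \in H$ linearly independent (as in the proof of Lemma \ref{aux-1}), and the already-established identities, I would pin down how $\mathrm{ad}(a)$ acts on the complement $V$ and how $[L,L]$ interacts with $H$.

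The crux will be to produce the abelian ideal. The natural candidate is (a subspace of) $H$, or the span of $H$ together with the image of certain nilpotent operators. Since $H$ is abelian and $H \subseteq \mathrm{Ker}\,\omega$, it suffices to show $[L,H] \subseteq H$, i.e.\ that $H$ is an ideal; then $H$ is an abelian ideal and we are done. We already know $[H,a] = H \subseteq H$; the remaining issue is $[H, V]$ where $V$ is the complement. Here I would use Lemma \ref{xy}(i): since $\dim H \ge 2$ and $rank(\omega) = \dim L - 2 \ge 3 \ge 2$, any two elements of $\mathrm{Ker}\,\omega$ bracket into their own span, so in particular $[H, H] \subseteq H$ (trivially true, $H$ abelian) and more importantly $[a, h] \in Ka + Kh$, which is consistent. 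To control $[H, V]$ I would feed the $4$-variable identity (\ref{two-basic}) with $x \in V$, $h, h' \in H$: this gave Lemma \ref{aux-1}(i), but taking instead three elements of $H$ and one of $V$, or two of $H$ and two of $V$, should yield that $d\omega$ terms force $[h, x]$, for $h\in H$, $x\in V$, to lie in a small controlled subspace. If $[L,H]\not\subseteq H$, one extracts instead a codimension-$1$ Lie subalgebra or an almost-abelian structure whose abelian part is an ideal — mirroring the dichotomy in Lemma \ref{aux} — and in each branch the nilpotency of the $H$-action collapses the possibilities down to $\dim L \le 4$.

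The main obstacle I anticipate is the bookkeeping needed to show $H$ (or a suitable modification) is actually an ideal rather than merely a subalgebra: the nilpotent action of $H$ on $V$ could a priori be complicated, and one must rule out a "Heisenberg-like" interaction between $H$ and $V$. The leverage is that $\omega$ is nondegenerate on $V$ and $H$ acts by commuting nilpotent skew-adjoint operators (Lemma \ref{aux-1}(i) plus $H$ abelian), and a space of commuting nilpotent elements of $\mathfrak{sp}(V)$ together with the $\omega$-Jacobi constraints is very rigid; combined with the fact (Lemma \ref{xy}(ii), applicable once $rank(\omega) \ge 4$, i.e.\ $\dim L \ge 6$) that $\mathrm{Ker}\,\omega$ is then already an ideal, the only genuinely hard dimensions to handle by hand are $\dim L = 5$, which I would dispatch by a direct finite check on the $3$-dimensional symplectic complement, exactly in the spirit of Lemmata \ref{quasi} and \ref{2-quasi}.
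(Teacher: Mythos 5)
There is a genuine gap, and it starts with a miscomputation of the rank of $\omega$. In case (iii) of Lemma \ref{codim-2} it is $Ker\,\omega$ that has codimension $2$, so $rank(\omega)=2$ and the symplectic complement $V$ is \emph{two}-dimensional; you have the picture reversed when you write that $V$ is "a symplectic space of rank $=\dim L-2$" and later that "$rank(\omega)=\dim L-2\ge 3$". This error removes the main leverage you rely on: Lemma \ref{xy}(ii) and (iii) require $rank(\omega)\ge 4$ and $\ge 6$ respectively and are therefore \emph{never} applicable here, so the claim that $Ker\,\omega$ is automatically an ideal once $\dim L\ge 6$, and that only $\dim L=5$ needs a hand check, is unfounded. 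There is no large space of commuting nilpotent elements of $\mathfrak{sp}(V)$ to exploit; instead, the abelian subalgebra $H$ (of dimension $\dim L-3$, not $\dim L-2$) acts by commuting nilpotent operators on a $2$-dimensional quotient $L/Ker\,\omega$, which is where the actual rigidity comes from.

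The second gap is that the crux — producing a nonzero abelian ideal — is only gestured at ("should yield\dots", "collapses the possibilities"). Note also that your primary candidate, $H$ itself, need not be an ideal: in the paper's analysis the abelian ideal in the nontrivial branch is the hyperplane $Ker\,\lambda\subset H$, obtained only after triangularizing the $\ad h$ on $L/Ker\,\omega$, writing $[x,h]=\lambda(h)y+\mu(h)a+f(h)$, $[y,h]=\eta(h)a+g(h)$, and grinding through the $\omega$-Jacobi identities for the triples $(h_1,h_2,y)$, $(h_1,h_2,x)$, $(y,a,h)$ and $(x,a,h)$, together with the observation that semisimplicity forces the centralizer of $H$ to equal $H$. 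That explicit case analysis (which is where $\dim H>1$, i.e.\ $\dim L>4$, is actually used to kill $\eta$ and to split into the cases $\lambda=\mu=0$ versus $\mu=t\lambda$) is the content of the proof, and your outline does not supply a substitute for it.
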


\begin{proof}
Note that in all $\omega$-Jacobi identities considered below, the right-hand side vanishes,
so, essentially, this proof consists of tedious but elementary Lie-algebraic 
considerations.

Let $L$ be such an algebra. 
Write, as previously, $Ker\,\omega = H \oplus Ka$, $H$ abelian, 
$[a,h] = h$ for any $h\in H$. The set $\ad h$ for all $h\in H$, being a set of 
nilpotent commuting operators on the $2$-dimensional vector space $L/Ker\,\omega$, can be brought
simultaneously to the upper triangular form, i.e., one can choose a basis $\{x,y\}$ of the vector 
space complementary to $Ker\,\omega$ in $L$, and linear functions
$\lambda, \mu, \eta: H \to K$ and $f,g: H\to H$ such that
\begin{align*}
[x,h] &= \lambda(h)y + \mu(h)a + f(h)   \\
[y,h] &= \eta(h)a + g(h)
\end{align*}
for any $h\in H$. 

The $\omega$-Jacobi identity for triple $h_1,h_2\in H, y$ (in other words, 
the condition that $\ad h$ commute for all $h\in H$), yields
$\eta(h_1)h_2 = \eta(h_2)h_1$ for any $h_1,h_2\in H$. The latter condition implies
$\eta = 0$ provided $\dim H > 1$, i.e., $\dim L > 4$. Similarly, the $\omega$-Jacobi
identity for triple $h_1,h_2\in H, x$, yields
\begin{equation*}
\lambda(h_1)g(h_2) - \lambda(h_2)g(h_1) + \mu(h_1)h_2 - \mu(h_2)h_1 = 0
\end{equation*}
for any $h_1, h_2 \in H$. Then elementary linear-algebraic considerations imply 
that, provided $\dim H > 1$, one of the following holds:
\renewcommand{\labelenumi}{(\arabic{enumi})}
\begin{enumerate}
\item 
$\lambda \ne 0$, $\mu = t\lambda$ for some $t\in K$, and $g(h) = -th$ for $h\in Ker\,\lambda$;
\item $\lambda = \mu = 0$.
\end{enumerate}
In the second case $H$ is an abelian ideal of $L$.
In the first case, replacing $y$ by $y + ta$ and $g(h)$ by $g(h) + th$, we may assume
that $t=0$ and $g(Ker\,\lambda) = 0$.
The $\omega$-Jacobi identity for triple $y,a,h\in H$ yields $[[y,a],h] = 0$.
It is easy to see that the semisimplicity of $L$ implies that the centralizer of $H$ in $L$
coincides with $H$, so the latter equality implies $[y,a] \in H$.
Further, writing 
$$
[x,a] = \alpha x + \beta y + \gamma a + h^\prime
$$
for certain
$\alpha,\beta,\gamma\in K$, $h^\prime\in H$, and collecting terms in the 
$\omega$-Jacobi identity for triple $x,a,h\in H$, lying in $Ky$ and $H$, we get that
$\alpha = 1$ and $f(h) = -\gamma h$ for any $h\in Ker\,\lambda$.
But then $Ker\,\lambda$ is a nonzero abelian ideal of $L$.
\end{proof}

\section{Nondegenerate $\omega$}\label{section-nondeg}

In this section we treat the final, third case of Lemma \ref{main-lemma}.

\begin{lemma}\label{no-omega}
If $L$ is a finite-dimensional $\omega$-Lie algebra with nondegenerate $\omega$,
then $\dim L = 2$.
\end{lemma}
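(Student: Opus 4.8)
The plan is to reduce, using the structure theory developed in the preceding sections, to the case of a \emph{simple} $\omega$-Lie algebra carrying a nondegenerate $\omega$, and then to eliminate that case: by a rigidity argument when $\dim L \ge 6$, and by a finite computation when $\dim L = 4$.

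I would begin with two easy reductions. A nondegenerate skew-symmetric form lives only on an even-dimensional space, so $\dim L$ is even. And if $L$ were a Lie algebra, the $\omega$-Jacobi identity would read
\[
\omega(x,y)z + \omega(z,x)y + \omega(y,z)x = 0
\]
for all $x,y,z$; choosing $x,y,z$ linearly independent (possible once $\dim L \ge 3$) forces $\omega$ to vanish on every linearly independent pair, hence $\omega \equiv 0$, which is excluded. So I may assume $L$ is not a Lie algebra and, aiming at a contradiction, that $\dim L \ge 4$. Now I would invoke Lemma~\ref{main-lemma}. In its case (i) there is a Lie subalgebra of codimension $1$, and then the first Lemma of the preceding section forces $L$ to be a Lie algebra, or $\dim L = 3$, or $\codim Ker\,\omega = 2$ --- all impossible, since $Ker\,\omega = 0$ and $\dim L \ge 4$. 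Its case (ii) asserts $\codim Ker\,\omega = 2$ directly, again impossible. In its case (iii) the algebra $L$ is an abelian extension of a simple $\omega$-Lie algebra; but, by the construction in \S\,\ref{coho}, the kernel of an abelian extension lies in $Ker\,\omega = 0$, so $L$ is itself simple. This leaves only: $L$ simple, $\dim L$ even, $\dim L \ge 4$.

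For $\dim L \ge 6$ I would exploit Lemma~\ref{xy}(iii), applicable because $rank(\omega) = \dim L \ge 6$: whenever $\omega(x,y) = 0$ one has $[x,y] \in Kx + Ky$. Fixing $x \ne 0$ and comparing the expansions of $[x,y]$, $[x,z]$ and $[x,y+z]$ for linearly independent $y,z \in x^\bot$, one gets a scalar $\beta_x$ and a linear form $\alpha_x$ on $x^\bot$ with $[x,y] = \alpha_x(y)x + \beta_x y$ for all $y \in x^\bot$; equivalently, $\ad x$ is a scalar operator plus one of rank $\le 2$. If $\beta_x = 0$ for every $x$, then anticommutativity applied to orthogonal pairs gives $[x,y]=0$ whenever $\omega(x,y)=0$, whence $[z,w] = \omega(z,w)v$ for a single fixed $v$ (using that a vector space is never a union of two proper subspaces); then $[L,L] = Kv$ is at most one-dimensional, contradicting $L = [L,L]$. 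Otherwise some $\ad x_0$ is a \emph{nonzero} scalar plus a rank-$\le 2$ operator, hence has an eigenvalue of multiplicity $\ge \dim L - 2$; analysing the corresponding near-eigenspace decomposition, in the elementary but lengthier manner of Lemmata~\ref{der-lie-algs} and \ref{simple-codim2}, should produce a proper nonzero ideal, again a contradiction.

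It then remains to handle $\dim L = 4$ (the only remaining even value, the odd ones being ruled out by degeneracy of skew forms), which Lemma~\ref{xy} does not reach; here I would appeal to a finite computation --- the GAP search described in the Appendix --- confirming that no $4$-dimensional simple $\omega$-Lie algebra admits a nondegenerate $\omega$. I expect the hard parts to be this dimension-$4$ computation and the $\beta_{x_0} \ne 0$ branch of the $\dim L \ge 6$ analysis; by contrast, the reduction to the simple case is essentially bookkeeping over Lemma~\ref{main-lemma}.
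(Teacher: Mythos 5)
Your overall skeleton — even dimension, a reduction to the nondegenerate/simple situation, Lemma \ref{xy}(iii) for large rank, a finite computation in dimension $4$ — matches the paper's, and the preliminary reductions (evenness, non-Lie-ness, the bookkeeping over Lemma \ref{main-lemma}) are fine. But both places you yourself flag as ``the hard parts'' are genuine gaps, not just deferred routine work. In the $\dim L\ge 6$ case, your $\beta_{x_0}\ne 0$ branch ends with ``should produce a proper nonzero ideal'' with no argument; this is precisely where the content lies. The paper does not argue via an eigenspace decomposition of a single $\ad x_0$ at all: it decomposes $L=A\oplus B$ into two maximal isotropic subspaces, notes via Lemmata \ref{xy}(iii) and \ref{2-quasi} that each is abelian or almost abelian, fixes a symplectic basis $\{a_i,b_i\}$, and then feeds specific quadruples of basis vectors into the four-variable identity (\ref{two-basic}) to pin down every structure constant and force $L$ to be abelian — after which the $\omega$-Jacobi identity kills $\omega$. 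Your $\beta_x\equiv 0$ branch (bracket factors through $\omega$, so $[L,L]$ is one-dimensional) is correct and is a nice shortcut for that subcase, but the complementary subcase is exactly the ``almost abelian isotropic subspace'' situation that the paper must work to exclude, and nothing in your sketch does that work.

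The dimension-$4$ case is a second gap of a different kind: the GAP code in the Appendix computes $(\alpha,\lambda)$-derivations of a \emph{given} algebra, so it cannot by itself ``confirm that no $4$-dimensional simple $\omega$-Lie algebra admits a nondegenerate $\omega$'' — there is no finite list of $4$-dimensional anticommutative algebras to run it over. The missing idea is Lemma \ref{3-dim}: every $4$-dimensional $\omega$-Lie algebra over an algebraically closed field contains a $3$-dimensional subalgebra $M$ (proved by a case analysis over the Jordan form of a non-nilpotent $\ad x$, with Koreshkov's theorem covering the all-nilpotent case). Only then does one observe that $M$ must be a non-Lie $3$-dimensional $\omega$-Lie algebra, hence on Nurowski's finite list, realize $L$ as $M$ plus an $(\alpha,\lambda)$-derivation as in \S\ref{sec-der}, and let the computation show $\alpha$ vanishes on $Ker\,\omega|_M$, so that $Ker\,\omega\ne 0$. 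Without this reduction your appeal to a ``finite computation'' has nothing finite to compute over. A minor remark on the positive side: your reduction to the simple case via Lemma \ref{main-lemma} is valid and not circular, but the paper's argument never needs simplicity — it derives abelianness directly — whereas your version leans on simplicity in both branches, so you are carrying extra hypotheses without having discharged the steps that would use them.
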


\begin{proof}
Since $\omega$ is nondegenerate, $\dim L = rank(\omega)$ is even.
First consider the case $\dim L \ge 6$. 
To treat this case, we will adopt the coordinate notation.
Though perhaps less elegant, it will make computations easier.

$L$ can be written as the direct sum of two maximal isotropic subspaces $A$ and $B$,
each of dimension $n = \frac{\dim L}{2} \ge 3$.
We may choose a basis 
$\{a_1, \dots, a_n \}$ of $A$ and a basis
$\{b_1, \dots, b_n \}$ of $B$ such that $\omega(a_i,b_i) = 1$ and $\omega(a_i,b_j) = 0$
if $i \ne j$.
Then by Lemmata \ref{xy}(iii) and \ref{2-quasi}, 
each that isotropic subspace is either abelian or almost abelian Lie subalgebra,
and it follows from the proof of Lemma \ref{2-quasi} that we may write multiplications
in them as 
$[a_i, a_j] = \alpha_j a_i - \alpha_i a_j$
and 
$[b_i, b_j] = \beta_j b_i - \beta_i b_j$
for some $\alpha_i, \beta_i \in K$.
Again, by Lemma \ref{xy}(iii), $[a_i, b_j] = \lambda_{ij} a_i + \mu_{ij} b_j$
if $i \ne j$, for some $\lambda_{ij}, \mu_{ij} \in K$.

Writing (\ref{two-basic}) for elements $b_i, b_j, b_k, a_i$, where $i,j,k$ are 
pairwise distinct, and collecting coefficients of $b_j$, we get
\begin{equation}\label{yoyo}
\omega([a_i,b_i],b_k) = \lambda_{ik}
\end{equation}
for any $i \ne k$.
Similarly, writing (\ref{two-basic}) for elements $a_i, a_j, b_i, b_k$, where $i,j,k$ are
pairwise distinct, and collecting coefficients of $a_j$, we get
$$
\omega([a_i,b_i],b_k) = \lambda_{ik} - \lambda_{jk} + \beta_k.
$$
Comparing these two equalities, we get $\lambda_{jk} = \beta_k$ for any $j \ne k$.
In a completely symmetric way, we also get 
\begin{equation}\label{yeye}
\omega([a_i,b_i],a_k) = -\mu_{ki} = \alpha_k
\end{equation}
for any $i \ne k$.

(\ref{yoyo}) and (\ref{yeye}) give all coefficients in the decomposition of 
$[a_i,b_i]$ by elements of the symplectic basis 
$\{a_1, \dots, a_n, b_1, \dots, b_n\}$, except those of $a_i$, $b_i$, so for any 
$1 \le i \le n$ we may write
$$
[a_i,b_i] = \sum_{\substack{1 \le k \le n \\ k\ne i}} (\beta_k a_k - \alpha_k b_k) + 
\lambda_i a_i + \mu_i b_i
$$
for some $\lambda_i, \mu_i \in K$.

Finally, writing (\ref{two-basic}) for elements $a_i, a_j, b_i, b_j$, where $i \ne j$,
taking into account all multiplication formulas between elements of $A$ and $B$ obtained
so far, and collecting coefficients of $a_i$ and $a_j$, we get respectively:
$\lambda_i = 2\beta_i$ and $\lambda_j = -2\beta_j$.
Consequently, $\lambda_i = \beta_i = 0$ for any $1 \le i \le n$.
Analogously, collecting coefficients of $b_i$ and $b_j$, we get $\mu_i = \alpha_i = 0$.

Therefore, $L$ is abelian. But then the $\omega$-Jacobi identity implies
that for any $3$ linearly independent elements, the values of $\omega$ on their
pairwise arguments vanish, which implies that $\omega$ vanishes, a contradiction.

The case $\dim L = 4$ requires a bit more cumbersome computations.
Note that we may assume that the ground field is algebraically closed,
as nondegeneracy of $\omega$ is obviously preserved under the ground field extension.

\smallskip

\begin{lemma}\label{3-dim}
A $4$-dimensional $\omega$-Lie algebra over an algebraically closed field
contains a $3$-dimensional subalgebra.
\end{lemma}

\begin{proof}
According to \cite[Corollary 2]{koreshkov}, any $4$-dimensional anticommutative
algebra all whose elements are nilpotent, contains a $3$-dimensional subalgebra.
Consequently, we may assume that $L$ contains a non-nilpotent element $x$.

We cannot invent anything better than proceed by boring case-by-case computations 
according to the Jordan normal form of $\ad x$ in a certain basis $\{x,y,z,t\}$ of $L$. 
Structure constants in that basis will be denoted as $C_{uv}^w$, 
the latter being the coefficient of $w$ in the decomposition of $[u,v]$,
where $u,v,w \in \{x,y,z,t\}$.

\textit{Case 1}. 
$\ad x = \begin{pmatrix}
0 & 0      & 0     & 0      \\
0 & \alpha & 0     & 0      \\
0 & 0      & \beta & 0      \\
0 & 0      & 0     & \gamma
\end{pmatrix}$.
Writing the $\omega$-Jacobi identity for triple $x,y,z$ and collecting coefficients of $t$,
we get $C_{yz}^t(\alpha + \beta - \gamma) = 0$.
If $C_{yz}^t = 0$, then $Kx \oplus Ky \oplus Kz$ forms a $3$-dimensional subalgebra.
Otherwise, $\alpha + \beta - \gamma = 0$. Repeating this argument for triples
$x,y,t$ and $x,z,t$, we get another two equalities: $\alpha - \beta + \gamma = 0$ and
$-\alpha + \beta + \gamma = 0$ respectively. The obtained homogeneous system of $3$ linear
equations in $3$ unknowns has only trivial solution, whence $\alpha = \beta = \gamma = 0$ 
and $\ad x$ is zero, a contradiction.

\textit{Case 2}.
$\ad x = \begin{pmatrix}
0 & 0      & 0      & 0      \\
0 & \alpha & 1      & 0      \\
0 & 0      & \alpha & 0      \\
0 & 0      & 0      & \beta
\end{pmatrix}$.
Writing the $\omega$-Jacobi identity for triple $x,y,t$ and collecting coefficients of $z$,
we get $C_{yt}^z \beta = 0$. If $C_{yt}^z = 0$, then $Kx \oplus Ky \oplus Kt$ forms a $3$-dimensional
subalgebra, otherwise $\beta = 0$.
Now writing the $\omega$-Jacobi identity for triple $x,y,z$ and collecting coefficients of $t$,
we get $C_{yz}^t \alpha = 0$. Since $\ad x$ is not nilpotent, $\alpha \ne 0$, hence
$C_{yz}^t = 0$, and $Kx \oplus Ky \oplus Kz$ forms a $3$-dimensional subalgebra. 

\textit{Case 3}.
$\ad x = \begin{pmatrix}
0 & 0      & 0      & 0      \\
0 & \alpha & 1      & 0      \\
0 & 0      & \alpha & 1      \\
0 & 0      & 0      & \alpha
\end{pmatrix}$.
Writing the $\omega$-Jacobi identity for triple $x,y,z$ and collecting coefficients of $t$,
we get $C_{yz}^t \alpha = 0$. Since $\alpha \ne 0$, $C_{yz}^t = 0$, and 
$Kx \oplus Ky \oplus Kz$ forms a $3$-dimensional subalgebra.

\textit{Case 4}.
$\ad x = \begin{pmatrix}
0 & 1 & 0      & 0      \\
0 & 0 & 0      & 0      \\
0 & 0 & \alpha & 0      \\
0 & 0 & 0      & \beta
\end{pmatrix}$.
Writing the $\omega$-Jacobi identity for triple $x,y,z$ and collecting coefficients of $t$,
we get $C_{yz}^t (\alpha - \beta) = 0$. If $C_{yz}^t = 0$, then $Kx \oplus Ky \oplus Kz$ forms
a $3$-dimensional subalgebra, otherwise $\alpha = \beta$. Now writing the $\omega$-Jacobi
identity for triple $x,z,t$ and collecting coefficients of $y$, we get $2C_{zt}^y \alpha = 0$.
Since $\ad x$ is not nilpotent, $\alpha \ne 0$, hence $C_{zt}^y = 0$, and 
$Kx \oplus Kz \oplus Kt$ forms a $3$-dimensional subalgebra.

\textit{Case 5}.
$\ad x = \begin{pmatrix}
0 & 1 & 0      & 0      \\
0 & 0 & 0      & 0      \\
0 & 0 & \alpha & 1      \\
0 & 0 & 0      & \alpha
\end{pmatrix}$.
Writing the $\omega$-Jacobi identity for triple $x,z,t$ and collecting coefficients of $y$,
we get $2C_{zt}^y \alpha = 0$. Since $\ad x$ is not nilpotent, $\alpha \ne 0$, hence
$C_{zt}^y = 0$, and $Kx \oplus Kz \oplus Kt$ forms a $3$-dimensional subalgebra.

\textit{Case 6}.
$\ad x = \begin{pmatrix}
0 & 1 & 0 & 0      \\
0 & 0 & 1 & 0      \\
0 & 0 & 0 & 0      \\
0 & 0 & 0 & \alpha
\end{pmatrix}$.
Finally, writing the $\omega$-Jacobi identity for triple $x,y,z$ and collecting the 
coefficients of $t$, we get $C_{yz}^t \alpha = 0$. Since $\ad x$ is not nilpotent,
$\alpha \ne 0$, hence $C_{yz}^t = 0$, and $Kx \oplus Ky \oplus Kz$ forms a $3$-dimensional subalgebra.
\end{proof}

\textit{Continuation of the proof of Lemma \ref{no-omega}.}
By Lemma \ref{3-dim}, $L$ has a $3$-dimensional subalgebra $M$.
Clearly, $\dim Ker\,\omega|_M$ is equal to $1$ or $3$. 
In the latter case $M$ is a Lie algebra, and $\omega$ is necessarily 
degenerate on the whole $L$. Hence $M$ is a $3$-dimensional $\omega$-Lie algebra
which is not a Lie algebra. 

All isomorphism classes of $3$-dimensional $\omega$-Lie algebras are listed in 
\cite[Theorem 2.1]{nurowski}\footnote[2]{
In \cite{nurowski}, algebras are classified over the field of real numbers,
but the classification readily extends to any ground field of characteristic $\ne 2$.
Over an algebraically closed field, types $(VIII)_a$ and $(IX)_a$ are isomorphic,
and over any field all parametric types are isomorphic for parameters $a$ and $-a$.
Note also that the definition of the $\omega$-Jacobi identity adopted here
differs from those in \cite{nurowski} by the sign of $\omega$.
}, 
and following the scheme of \S \ref{sec-der}, our task 
amounts to finding $(\alpha, \lambda)$-derivations of these algebras.
$\lambda$ can be found from (\ref{mult-1}), which in all cases amounts
to a linear system of $3$ equations in $3$ unknowns 
(values of $\lambda$ on the basic elements), having either single or a $1$-parametric 
solution.

We used a primitive GAP code, described in Appendix, to find that for any
$(\alpha,\lambda)$-derivation of any $3$-dimensional $\omega$-Lie algebra $M$, $\alpha$ 
vanishes on $Ker\,\omega|_M$. Consequently, for the appropriate $4$-dimensional
$\omega$-Lie algebra $L$, $Ker\,\omega \supseteq Ker\,\omega|_M \ne 0$, i.e., $\omega$
is degenerate.
\end{proof} % of the lemma about nondegenerate \omega (\ref{no-omega})

\section{Main theorems}\label{sect-main}

To summarize results of Lemmata \ref{main-lemma}, \ref{der-lie-algs}, \ref{codim-2}
and \ref{no-omega}:

\begin{theorem}\label{main}
Let $L$ be a finite-dimensional $\omega$-Lie algebra which is not a Lie algebra. 
Then one of the following holds:
\begin{enumerate}
\item $\dim L = 3$.
\item 
$L$ has a Lie subalgebra of codimension $1$ whose structure is described by
Lemma {\rm \ref{der-lie-algs}(iii)}.
\item 
$Ker\,\omega$ is an almost abelian Lie algebra of codimension $2$ in $L$ with the abelian
part acting nilpotently on $L$.
\end{enumerate}
In all the cases, $L$ has an abelian subalgebra of codimension $\le 3$.
\end{theorem}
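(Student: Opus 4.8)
The plan is to assemble the trichotomy by threading together Lemmata \ref{main-lemma}, \ref{no-omega}, \ref{der-lie-algs} and \ref{codim-2}, and then to exhibit the abelian subalgebra of codimension $\le 3$ by a short inspection in each of the three resulting cases.

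I would start from Lemma \ref{main-lemma}. Its third possibility is vacuous: if $L$ were an abelian extension of a simple $\omega$-Lie algebra $S = L/Ker\,\omega$ with nondegenerate induced form, then $S$ is nonzero and Lemma \ref{no-omega} forces $\dim S = 2$; but every $2$-dimensional $\omega$-Lie algebra is a Lie algebra, and neither of the two $2$-dimensional Lie algebras is simple, a contradiction. So $L$ either has a Lie subalgebra $A$ of codimension $1$, or $Ker\,\omega$ is an abelian or almost abelian subalgebra of codimension $2$. In the first case I invoke the description of \S \ref{sec-der}: such an $L$ comes from an $(\alpha,\lambda)$-derivation of the Lie algebra $A$ with $Ker\,\omega = Ker\,\alpha$; since $A$ is a Lie algebra $\lambda([A,A]) = 0$, so $L$ being non-Lie forces $\alpha \ne 0$, and Lemma \ref{der-lie-algs} then yields $\dim L = 3$ (clause (i)), or $A$ occurs in the list of Lemma \ref{der-lie-algs}(iii) with $Ker\,\omega = Ker\,\alpha$ of codimension $2$ (clause (ii)), or $\dim A \le 3$ (a small-dimensional residue to be checked separately). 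Whenever $\codim Ker\,\omega = 2$ — which covers the second main case and the codimension-$2$ outcome of the first — Lemma \ref{codim-2} applies and returns $\dim L = 3$ (clause (i)), or a Lie subalgebra of codimension $1$ (back to the previous step), or $Ker\,\omega$ almost abelian with abelian part acting nilpotently on $L$ (clause (iii)).

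For the final assertion I would argue clause by clause. If $\dim L = 3$, any line $Kx$ is an abelian subalgebra of codimension $2$. If $Ker\,\omega$ is almost abelian of codimension $2$ (clause (iii)), its abelian part is an abelian subalgebra of $L$, of codimension $1$ inside $Ker\,\omega$ and hence of codimension $3$ in $L$. If $L$ has a codimension-$1$ Lie subalgebra $A$ as in clause (ii), I run through the four types of Lemma \ref{der-lie-algs}(iii): in each, a short inspection produces an abelian subalgebra of codimension $\le 2$ in $A$ — the abelian summand or abelian part by itself in types (iiia) and (iiic), and together with the element $q$ in types (iiib) and (iiid) where $\ad q$ annihilates it — hence an abelian subalgebra of codimension $\le 3$ in $L$; in the residual case $\dim A \le 3$ a line in $A$ already suffices.

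The main obstacle is not any single computation but the case management: one must verify that appealing to the structural classification of $\omega$-Lie algebras with a codimension-$1$ Lie subalgebra (via $(\alpha,\lambda)$-derivations and Lemma \ref{der-lie-algs}) actually breaks the apparent loop between Lemma \ref{der-lie-algs}'s setting and clause (ii) of Lemma \ref{codim-2}, and that the low-dimensional exceptions ($\dim L = 3$, and $\dim L = 4$ carrying a $3$-dimensional Lie subalgebra) are correctly absorbed into clauses (i)--(iii). The remaining ingredients — the contradiction ruling out Lemma \ref{main-lemma}(iii), and the type-by-type search for the abelian subalgebra — are short and mechanical.
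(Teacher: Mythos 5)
Your proposal is correct and follows essentially the same route as the paper: Theorem \ref{main} is obtained there simply by combining Lemma \ref{main-lemma} (with its third alternative eliminated via Lemma \ref{no-omega}) with Lemmata \ref{der-lie-algs} and \ref{codim-2}, which is exactly the assembly you describe, including the observation that the apparent loop between the codimension-$1$ and codimension-$2$ cases terminates through the $(\alpha,\lambda)$-derivation description. You in fact supply a little more than the paper does, namely the explicit type-by-type verification of the final codimension-$\le 3$ claim, which the paper asserts without proof.
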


To summarize further results of Corollary \ref{simple-codim-1} and 
Lemma \ref{simple-codim2}:

\begin{theorem}\label{th-simple}
A finite-dimensional semisimple $\omega$-Lie algebra is either a Lie algebra, or has dimension
$\le 4$.
\end{theorem}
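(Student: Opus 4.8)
The plan is to treat the statement as a bookkeeping corollary of the structural results already in hand. Theorem~\ref{main} has reduced every finite-dimensional $\omega$-Lie algebra which is not a Lie algebra to one of three shapes, and the two preceding ``semisimple'' results, Corollary~\ref{simple-codim-1} and Lemma~\ref{simple-codim2}, already bound the dimension in precisely the two shapes that are not low-dimensional by fiat. So I would let $L$ be a finite-dimensional semisimple $\omega$-Lie algebra; if $L$ is a Lie algebra there is nothing to prove, so assume it is not, and apply Theorem~\ref{main} to obtain a three-way case split.

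In the first case $\dim L = 3$, hence $\dim L \le 4$ and we are done. In the second case $L$ has a Lie subalgebra of codimension $1$; here Corollary~\ref{simple-codim-1} applies directly, and since $L$ is not a Lie algebra it forces $\dim L \le 4$. In the third case $Ker\,\omega$ is an almost abelian Lie algebra of codimension $2$ whose abelian part acts nilpotently on $L$, which is exactly condition (iii) of Lemma~\ref{codim-2}; then Lemma~\ref{simple-codim2} gives $\dim L \le 4$. These cases are exhaustive, so the proof is complete.

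The only points that need attention --- and the nearest thing to an obstacle --- are bookkeeping ones. First, one should check that the semisimplicity hypothesis is genuinely available where it is used: both Corollary~\ref{simple-codim-1} and Lemma~\ref{simple-codim2} are stated for semisimple (not merely simple) algebras, so quoting them verbatim is legitimate. Second, one should note that the ``abelian extension of a simple $\omega$-Lie algebra with nondegenerate $\omega$'' possibility appearing in the preliminary Lemma~\ref{main-lemma} has already been killed off by Lemma~\ref{no-omega}, which is why it does not resurface as a fourth alternative in Theorem~\ref{main}; hence no extra case is hiding. Beyond these checks the argument is a one-line deduction from the cited statements, with all the real work having been done in \S\ref{sec-der} and \S 7.
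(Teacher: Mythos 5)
Your proof is correct and is essentially the paper's own argument: the paper proves Theorem~\ref{th-simple} precisely by summarizing Corollary~\ref{simple-codim-1} and Lemma~\ref{simple-codim2} against the case split of Theorem~\ref{main}, exactly as you do. Your two bookkeeping checks (semisimple vs.\ simple hypotheses, and the elimination of the nondegenerate-$\omega$ case by Lemma~\ref{no-omega}) are both accurate.
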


Thus, the structure of $\omega$-Lie algebras beyond dimension $3$ turns out to be quite 
``degenerate'', and, in a sense, all the interesting cases are already presented 
in \cite{nurowski}.

In principle, it is possible to enumerate on computer all isomorphism clas\-ses of 
$4$-di\-men\-si\-o\-nal $\omega$-Lie algebras, and, among them, of all simple algebras, 
but we will not venture into this: as noted at the end of \S \ref{sec-der}, there 
are a lot of them, sometimes with quite cumbersome multiplication tables.

Here is just one example of a $4$-dimensional simple $\omega$-Lie algebra, obtained
via appropriate $(\alpha,\lambda)$-derivation from the algebra of type $(IV)_T$ in the
Nurowski's list:
\begin{equation*}
[e_1,e_2] = e_2, \> [e_1,e_3] = e_3, \> [e_2,e_3] = e_1, \> [e_1,e_4] = -e_3 + 2e_4, \>
[e_2,e_4] = e_1, \> [e_3,e_4] = 0,
\end{equation*}
and the only nonzero values of $\omega$ on the pairs of elements from the basis are:
$$
\omega(e_2,e_3) = \omega(e_2,e_4) = 2 .
$$

\section{Identities}\label{ident}

In this section we address the following natural question: what identities are satisfied
by $\omega$-Lie algebras? 
Note that one should distinguish identities of $\omega$-Lie algebras as ordinary 
algebras, i.e., in the signature consisting of one binary operation which is an algebra 
multiplication, and as $\omega$-algebras, i.e., in the signature consisting
of one binary operation representing multiplication, and one binary operation with 
values in the ground field, representing the form $\omega$. Let us call identities in
the latter sense \textit{$\omega$-identities}. Clearly, the $\omega$-Jacobi identity is an 
$\omega$-identity, and we are primarily interested in (ordinary) identities in the former
sense.

One of the fruitful methods to study identities of 
algebras is to superize the situation, and consider the Grassmann envelopes of 
corresponding superalgebras. But to be able to apply this method, 
the class of algebras under consideration should be closed under tensoring with an 
associative commutative algebra. It is easy to see that this is not so for $\omega$-Lie 
algebras. To this end, we enlarge the definition of $\omega$-Lie algebras by allowing 
$\omega$ to take values in the centroid of the algebra, instead of merely in the ground
field:

\begin{definition}
An anticommutative algebra $L$ is called an \textit{extended $\omega$-Lie algebra}
if there is a bilinear map $\omega: L \times L \to Cent(L)$ such that the $\omega$-Jacobi identity
(\ref{jacobi}) holds.
\end{definition}

Here $Cent(L)$ denotes the centroid of an algebra $L$.

%It is clear that central extended $\omega$-Lie algebras are $\omega$-Lie algebras.
For any algebra $L$ and associative commutative algebra $A$, we have an obvious inclusion
$$
Cent(L) \otimes A \subseteq Cent(L\otimes A) .
$$
If $L$ is an extended $\omega$-Lie algebra, then, defining a bilinear map
\begin{align}
\Omega: (L\otimes A) \times (L\otimes A) &\to Cent(L) \otimes A \notag \\
        (x\otimes a, y\otimes b) &\mapsto \omega(x,y) \otimes ab , \label{Omega}
\end{align}
where $x,y\in L$, $a,b\in A$, we see that $L\otimes A$ becomes an extended 
$\Omega$-Lie algebra.

It is clear that the class of $\omega$-Lie algebras and the class of extended $\omega$-Lie algebras
satisfy the same identities and $\omega$-identities.

The notion of (extended) $\omega$-Lie algebra can also be generalized to the super case.
For a superalgebra $L$, let $Cent(L)$ denote a supercentroid of $L$
(which is a generalization of the ordinary centroid).

\begin{definition}
A super-anticommutative superalgebra $L = L_0 \oplus L_1$ is called an 
\textit{extended $\omega$-Lie superalgebra}
if there is a super-skew-symmetric bilinear map $\omega: L \times L \to Cent(L)$ such that
\begin{multline}\label{super}
  (-1)^{\deg x \deg z}[x,[y,z]]
+ (-1)^{\deg z \deg y}[z,[x,y]]
+ (-1)^{\deg y \deg x}[y,[z,x]]  \\
+ (-1)^{\deg x \deg z}\omega(y,z)x
+ (-1)^{\deg z \deg y}\omega(x,y)z 
+ (-1)^{\deg y \deg x}\omega(z,x)y = 0
\end{multline}
holds for any homogeneous elements $x,y,z\in L$.
\end{definition}

As in the ordinary case, one easily sees that if $L = L_0 \oplus L_1$ is an extended 
$\omega$-Lie superalgebra, and $A = A_0 \oplus A_1$ is a commutative superalgebra, then
the algebra $(L_0 \otimes A_0) \oplus (L_1 \otimes A_1)$ is an extended $\Omega$-Lie
algebra for $\Omega$ defined by formula (\ref{Omega}), for the respective homogeneous
elements $x,y,a,b$. In particular, the Grassmann envelope of an extended $\omega$-Lie 
superalgebra is an $\Omega$-Lie algebra for a suitable $\Omega$.

Now, again, like in the case of ordinary algebras, we have the well-known correspondence
between $\omega$-identities of an $\omega$-Lie superalgebra and $\omega$-identities of
its Grassmann envelope (realized, on the level of multilinear identities, by injecting
appropriate signs at appropriate places). This provides a compact method to write
identities and $\omega$-identities of (extended) $\omega$-Lie algebras.

For example, the $\omega$-Jacobi superidentity (\ref{super}), written for triples 
$x,x,x$ and $x,x,[x,x]$, implies respectively
\begin{gather}
[[x,x],x] = \omega(x,x)x                  \label{32} \\  
2\omega([x,x],x)x + \omega(x,x)[x,x] = 0  \label{33} 
\end{gather}
for any (odd) $x\in L$. It is possible to show that the full linearization of the
$\omega$-identity (\ref{33}) is equivalent to the super-analog of the identity (\ref{two-basic}).

In its turn, (\ref{32}) implies the identity 
$$
[[[[x,x],x],x],x] + [[[x,x],x],[x,x]] = 0
$$
Linearizing the latter identity and taking its ``ordinary'' part, one arrives at the
following identity of degree $5$ satisfied by all $\omega$-Lie algebras:
$$
\sum_{\sigma\in S_5} (-1)^\sigma 
\Big([[[[x_{\sigma(1)},x_{\sigma(2)}],x_{\sigma(3)}],x_{\sigma(4)}],x_{\sigma(5)}] +
[[[x_{\sigma(1)},x_{\sigma(2)}],x_{\sigma(3)}],[x_{\sigma(4)},x_{\sigma(5)}]]\Big) = 0 .
$$

This is identity of the smallest possible degree:

\begin{proposition}\label{prop-ident}
The minimal degree of identity which is satisfied by all $\omega$-Lie algebras is $5$.
\end{proposition}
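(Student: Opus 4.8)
The plan is to prove the two halves separately: first exhibit the degree-$5$ identity already derived in the text, and then show that no identity of degree $\le 4$ holds in all $\omega$-Lie algebras.

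For the positive half, one must check that the displayed degree-$5$ expression
$$
\sum_{\sigma\in S_5} (-1)^\sigma
\Big([[[[x_{\sigma(1)},x_{\sigma(2)}],x_{\sigma(3)}],x_{\sigma(4)}],x_{\sigma(5)}] +
[[[x_{\sigma(1)},x_{\sigma(2)}],x_{\sigma(3)}],[x_{\sigma(4)},x_{\sigma(5)}]]\Big)
$$
really is an identity of every $\omega$-Lie algebra. The work leading up to the proposition already sketches this: the $\omega$-Jacobi superidentity for the triple $x,x,x$ gives $[[x,x],x]=\omega(x,x)x$ for odd $x$ in an extended $\omega$-Lie superalgebra, whence $[[[[x,x],x],x],x]+[[[x,x],x],[x,x]]=0$ (substituting the first relation into itself, using that $[x,x]$ is an even element killed by $\omega(x,x)$ after one more bracket — one spells out the two applications carefully). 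Passing to the Grassmann envelope and taking the multilinearization and its ``ordinary'' (i.e. fully antisymmetric) component converts this superidentity into the displayed ordinary identity; by the closure of $\omega$-Lie algebras under the Grassmann-envelope construction established just above, this identity holds in every $\omega$-Lie algebra. I would present this as: (a) verify $[[x,x],x]=\omega(x,x)x$ from (\ref{super}), (b) deduce the quintic superidentity, (c) invoke the superalgebra/Grassmann-envelope correspondence to land the ordinary identity.

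For the negative half — which is the real content — one must produce, for each $d\in\{3,4\}$, an $\omega$-Lie algebra not satisfying any identity of degree $d$. The cleanest route is to observe that the free-ish behaviour is already visible in $sl(2)$ (for $\omega=0$) or, better, in a suitable non-Lie example. Actually the slick argument is: degree $\le 2$ identities are vacuous for anticommutative algebras (the only such identities are consequences of anticommutativity), and degree $3$ and $4$ multilinear identities, after antisymmetrization, reduce to $\mathbb{Z}$-linear combinations of the basic bracket monomials; I would show that $sl(2,K)$ — which is in particular an $\omega$-Lie algebra with $\omega=0$ — satisfies no nontrivial multilinear identity of degree $\le 4$. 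This is classical: $sl_2$ generates the variety of all Lie algebras up to degree $5$ (the first identity of $sl_2$ beyond the Lie identities, namely the standard Lie identity $s_5$ or Glennie-type phenomena, appears only in degree $\ge 5$), so in particular no identity of degree $\le 4$ distinguishes $sl_2$ from the free Lie algebra, and the free Lie algebra satisfies no identity at all beyond anticommutativity and Jacobi. Since every identity of all $\omega$-Lie algebras is in particular an identity of $sl_2$, and $sl_2$ has none of degree $\le 4$ that are not already consequences of anticommutativity and the (ordinary) Jacobi identity — and the ordinary Jacobi identity itself fails in non-Lie $\omega$-Lie algebras (e.g. Nurowski's $3$-dimensional examples) — any candidate identity of degree $\le 4$ valid for all $\omega$-Lie algebras would have to be a consequence of anticommutativity alone, hence trivial.

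The hard part is making the degree-$3$ and degree-$4$ nonexistence argument airtight without hand-waving about ``$sl_2$ generates the Lie variety up to degree $5$'': one genuinely needs a small explicit computation. I would therefore do it concretely — fix the standard basis of $sl(2)$ (or of one of Nurowski's $3$-dimensional non-Lie $\omega$-Lie algebras, to also kill the Jacobi identity simultaneously), write the generic multilinear element of degree $3$ and of degree $4$ in the free anticommutative algebra modulo anticommutativity, evaluate on all tuples of basis vectors, and check that the resulting linear system over $K$ forces all coefficients to zero except those already forced by anticommutativity. This is a finite, if tedious, linear-algebra verification (dimensions of the relevant multilinear components are small: degree $3$ has a $2$-dimensional space of antisymmetrized monomials, degree $4$ a modestly larger one), and it is exactly the kind of computation the paper elsewhere defers to its GAP appendix; I would either present the degree-$3$ case by hand and indicate the degree-$4$ case is analogous, or cite the appendix code. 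Combining the two halves gives that $5$ is both attained and minimal.
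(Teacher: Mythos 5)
The positive half of your proposal (deriving the degree-$5$ identity via the superidentity and the Grassmann envelope) matches what the paper does in the paragraphs preceding the proposition, and is fine. The gap is in the negative half. From the fact that every identity of all $\omega$-Lie algebras is an identity of $sl_2$, and that $sl_2$ has no identities of degree $\le 4$ beyond consequences of anticommutativity and the Jacobi identity, you conclude that a candidate identity must be ``a consequence of anticommutativity alone, hence trivial,'' because Jacobi itself fails in Nurowski's examples. That dichotomy is false: there are nontrivial degree-$4$ identities of anticommutative algebras that are \emph{consequences} of Jacobi without being equivalent to it --- for instance the binary-Lie identity $J(x,y,[x,y])=0$ and the alternating identity $[J(x,y,z),t]-[J(t,x,y),z]+[J(z,t,x),y]-[J(y,z,t),x]=0$, where $J$ denotes the Jacobian. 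Each of these holds in $sl_2$, is not a consequence of anticommutativity, and could a priori hold in every $\omega$-Lie algebra; the failure of Jacobi in some algebra says nothing about the failure of its weaker consequences there. So these candidates must be killed one by one by exhibiting concrete non-Lie $\omega$-Lie algebras violating them, which is exactly the content of the paper's proof (it invokes the Kuzmin and Kass--Whitthoft classification of irreducible degree-$\le 4$ identities of anticommutative algebras to reduce to four explicit candidates, then rules each out).

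Your proposed fallback --- evaluating the generic multilinear degree-$4$ element on one of Nurowski's $3$-dimensional non-Lie algebras --- cannot close this gap either: the alternating identity above is fully antisymmetric in four variables, hence vanishes identically on \emph{every} $3$-dimensional algebra for dimension reasons. To refute it one needs a non-Lie example of dimension $\ge 4$; the paper uses the $4$-dimensional algebras obtained from $sl(2)$ by $(\alpha,0)$-derivations, together with Lemma \ref{4-var}, which converts that identity into the condition $\omega([x,y],z)+\omega([z,x],y)+\omega([y,z],x)=0$ that visibly fails there. With these two repairs --- treating the Jacobi-consequences in degree $4$ explicitly, and using a $4$-dimensional counterexample for the antisymmetric one --- your outline becomes essentially the paper's argument.
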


\begin{proof}
It is sufficient to show that no identity of degree $\le 4$ is satisfied by all
$\omega$-Lie algebras.
Irreducible identities of degree $\le 4$ of anticommutative algebras were described in 
\cite[\S 2]{kuzmin} and \cite[Theorem 3]{kw}. According to these results, every
anticommutative algebra with multiplication $\liebrack$, satisfying an identity of degree
$\le 4$, satisfies one of the following identities:
\begin{gather}
[[[y,x],x],x] = 0  \label{engel} \\ 
  \alpha[[x,y],[x,z]] + \beta\Big([[[x,y],x],z] - [[[x,z],x],y]\Big) \label{abg} \\ 
+ \gamma\Big([[[x,y],z],x] - [[[x,z],y],x]\Big) +
(\beta + \gamma)[[[y,z],x],x] = 0  \notag  \\
J(x,y,[x,y]) = 0  \label{bin} \\
[J(x,y,z),t] - [J(t,x,y),z] + [J(z,t,x),y] - [J(y,z,t),x] = 0 , \label{4}
\end{gather}
where $J(x,y,z) = [[x,y],z] + [[z,x],y] + [[y,z],x]$ is the Jacobian, and 
$\alpha, \beta, \gamma$ in (\ref{abg}) are some fixed elements of the ground field.

Identities (\ref{engel}) and (\ref{abg}) are not satisfied even in the narrower class
of Lie algebras: (\ref{engel}) is the $3$rd Engel condition, and (\ref{abg}) is not
satisfied, for example, in the free $4$-generated Lie algebra.

Identity (\ref{bin}) defines binary-Lie algebras. Being coupled with the 
$\omega$-Jacobi identity (\ref{jacobi}), it implies
$$
\omega(x,y)[x,y] = \omega([x,y],y)x - \omega([x,y],x)y
$$
The latter condition is violated, for example, for most of the 
$3$-dimensional algebras in Nurowski's list \cite{nurowski}.

Similarly, (\ref{4}) together with (\ref{jacobi}) implies
$$
  \omega(z,t)[x,y]
+ \omega(t,y)[x,z]
+ \omega(y,z)[x,t]
+ \omega(x,t)[y,z]
+ \omega(z,x)[y,t]
+ \omega(x,y)[z,t]
= 0 .
$$
In view of Lemma \ref{4-var}, for $\omega$-Lie algebras of dimension $>3$ this is equivalent to 
$$
\omega([x,y],z) + \omega([z,x],y) + \omega([y,z],x) = 0 .
$$
The last condition is not fulfilled, for example, for $4$-dimensional $\omega$-Lie 
algebras obtained by extending $sl(2)$ by its $(\alpha,0)$-derivations, 
described at the end of \S \ref{sec-der}.
\end{proof}

\section{Further questions}\label{section-quest}

\begin{question}
What happens in characteristics $2$ and $3$?
\end{question}

In the case of characteristic $2$ an entirely different approach 
(and, perhaps, a different definition of an $\omega$-Lie algebra)
would be needed. 
On the contrary, the assumption that the characteristic of the ground field is
different from $3$, was used only twice,
in the key Lemma \ref{xy} and when performing calculations with $3$-dimensional
$\omega$-Lie algebras described at the end of \S \ref{section-nondeg}.
(Also, in characteristic $3$ the $\omega$-identity (\ref{32}) no longer follows
from the $\omega$-Jacobi superidentity, and one needs to include it in the definition
of $\omega$-Lie superalgebra). 
More accurate reasonings could show that a statement similar to Lemma \ref{xy}
still holds in characteristic $3$, with stronger conditions
on the dimension of $rank\,\omega$ (basically, shifted to 2). 

%As for simple $\omega$-Lie algebras of dimension $4$ with nondegenerate $\omega$, 
%we expect that they do exist in characteristic $3$, in which case, according to 
%Lemma \ref{main-lemma}(iii), one needs to compute their abelian extensions.
%
So, probably the case of characteristic $3$ could be treated along the lines
of the present paper.

\begin{question}\label{quest-deform}
Which Lie algebras are deformed into non-Lie $\omega$-Lie algebras?
\end{question}

As we learned from Rutwig Campoamor-Stursberg, there was a hope to get
some physically meaningful contractions of $\omega$-Lie algebras into simple Lie algebras.
From Theorem \ref{main} it is clear that in dimension $>3$ this is impossible --
contracted Lie algebras should be not less degenerate than $\omega$-Lie algebras,
close to abelian ones.

Nevertheless, one can still ask which Lie algebras could arise as such contractions,
which is, essentially, equivalent to the question: which Lie algebras
could be deformed into $\omega$-Lie algebras?

Let us try to develop a rudimentary deformation theory of $\omega$-Lie algebras, 
following the 
standard nowadays format suggested by Gerstenhaber in \cite{gerstenhaber}.
A deformation of an $\omega$-Lie algebra $L$ (which can be just a Lie algebra with
$\omega = 0$) is an $\omega_t$-Lie algebra $L_t$
defined over a power series ring $K[[t]]$ whose multiplication $\liebrack_t$ 
and form $\omega_t$ satisfy the conditions
\begin{gather*}
[x,y]_t = [x,y] + \varphi_1(x,y) t + \varphi_2(x,y) t^2 + \dots   \\
\omega_t (x,y) = \omega(x,y) + \omega_1(x,y) t + \omega_2(x,y) t^2 + \dots
\end{gather*}
for certain bilinear maps $\varphi_n: L \times L \to L$ and $\omega_n: L \times L \to K$.

Anticommutativity of $\liebrack_t$ and skew-symmetricity of $\omega_t$ imply that each 
$\varphi_n$ is anticommutative and each $\omega_n$ is skew-symmetric.
The $\omega$-Jacobi identity for $L_t$ is equivalent to:
$$
d\,\varphi_n(x,y,z) + 
\sum_{\substack{i+j= n \\ i,j > 0}} [\varphi_i, \varphi_j] (x,y,z) =
\omega_n(x,y)z + \omega_n(z,x)y + \omega_n(y,z)x 
$$
for each $n=1,2, \dots$ and $x,y,z\in L$, 
where $d$ is the second-order Chevalley-Eilenberg differential in the Lie algebra
(= $\omega$-Lie algebra) 
cohomology, and $\liebrack$ is the usual Massey product of 2-cochains.

The first of these equalities ($n=1$) reads:
\begin{multline}\label{infi}
\varphi_1([x,y],z) + \varphi_1([z,x],y) + \varphi_1([y,z],x) +
[\varphi_1(x,y),z] + [\varphi_1(z,x),y] + [\varphi_1(y,z),x] \\ = 
\omega_1(x,y)z + \omega_1(z,x)y + \omega_1(y,z)x .
\end{multline}

Thus, the question reduces to: which Lie algebras admit infinitesimal deformations 
(\ref{infi}) with nontrivial $\omega_1$?

\begin{question}
Are there ``interesting'' examples of infinite-dimensional $\omega$-Lie algebras
and of $\omega$-Lie superalgebras?
\end{question}

Could it be that in the super or, more general, color case, new phenomena will arise
making the structure theory more colorful, for example, allowing the existence of some 
interesting simple objects?

\begin{question}
What would be analogs of $\omega$-Lie algebras for other classes of algebras?
\end{question}

By analogy with the $\omega$-Jacobi identity, one may to alter the associative identity 
as follows:
\begin{equation}\label{ass-bad}
(xy)z - x(yz) = \omega(x,y)z - \omega(y,z)x .
\end{equation}
One of the main features of associative algebras is that they are Lie-admissible, and one
may wish to preserve this relationship for their $\omega$-variants: namely, that if $A$ 
is an $\omega$-associative algebra, then its ``minus'' algebra with multiplication
$[x,y] = xy - yx$ for $x,y\in A$, would be $\omega$-Lie. An easy calculation shows that the 
``minus'' algebra of an algebra satisfying the identity (\ref{ass-bad}) is Lie, and not 
just $\omega$-Lie. That
indicates that (\ref{ass-bad}) is probably not an adequate definition of 
$\omega$-associativity, and one may wish to alter it further as follows:
\begin{equation}\label{ass-good}
(xy)z - x(yz) = \omega_1(x,y)z - \omega_2(y,z)x 
\end{equation}
for some two bilinear forms $\omega_1, \omega_2: L \times L \to K$
(one may argue that, unlike in the Lie case, to reflect the difference of order in 
multiplication in two terms on the left-hand side, two different bilinear forms 
$\omega_1$ and $\omega_2$ are required).
A ``minus'' algebra
of an algebra satisfying the latter identity is indeed an $\omega$-Lie algebra, with
$$
\omega(x,y) = (\omega_1 - \omega_2)(x,y) - (\omega_1 - \omega_2)(y,x) .
$$
Is (\ref{ass-good}) a ``correct'' definition of an $\omega$-associative algebra?
Does it lead to anything interesting?

Similarly, what would be ``correct'' definitions for $\omega$-Leibniz
algebras, $\omega$-Novikov algebras, $\omega$-left-(or right-)symmetric algebras, etc.?

\section*{Acknowledgements}

Thanks are due to Rutwig Campoamor-Stursberg, Zhiqi Chen and Friedrich Wagemann for 
interesting discussions, and to Dimitry Leites for suggestions which improved the 
presentation. A previous version of this manuscript was rejected by 
\textit{Journal of Algebra}, but I owe to the anonymous referee of that submission a 
great deal of useful comments: in particular, he urged formulating
Theorem \ref{th-simple} in \S \ref{sect-main} explicitly, and most of the 
material in \S \ref{ident} of the present version stems from his suggestions.

\appendix
\section*{Appendix}

Here we describe a simple-minded GAP code, available at \newline
\texttt{http://justpasha.org/math/alder.gap}, for calculating 
$(\alpha,\lambda)$-de\-ri\-va\-t\-i\-ons of an $\omega$-Lie algebra,
mentioned in \S\S \ref{sec-der} and \ref{section-nondeg}.

Let $L$ be an anticommutative algebra with the basis $\{e_1, \dots, e_n\}$ defined
over a field $K$, and with multiplication table 
$$
[e_i,e_j] = \sum_{k=1}^n C_{ij}^k e_k ,
$$ 
and let $D$ be an 
$(\alpha,\lambda)$-derivation of $L$. Writing $D(e_i) = \sum_{j=1}^n d_{ij}e_j$,
$\lambda(e_i) = \lambda_i$, and $\alpha(e_i) = \alpha_i$
for certain $d_{ij}, \lambda_i, \alpha_i \in K$,
the condition (\ref{pre-der}), written
for each pair of basic elements, is equivalent to the system of $\frac{n^2(n-1)}{2}$
linear equations in $n^2 + n$ unknowns $d_{ij}$ and $\alpha_i$:
\begin{equation}\label{sys}
\sum_{k=1}^n C_{ij}^k d_{kl} - \sum_{k=1}^n C_{kj}^l d_{ik} + \sum_{k=1}^n C_{ki}^l d_{jk} 
- \lambda_j d_{il} + \lambda_i d_{jl} - \delta_{il} \alpha_j + \delta_{jl} \alpha_i = 0
\end{equation}
for $1 \le i < j \le n$, $1 \le l \le n$ ($\delta_{ij}$ is the Kronecker symbol).

If, additionally, $L$ is an $\omega$-Lie algebra, $\lambda_i$ can be found from 
(\ref{mult-1}), which is equivalent to the system of $\frac{n(n-1)}{2}$ 
linear equations in $n$ unknowns:
$$
\sum_{k=1}^n C_{ij}^k \lambda_k = \omega(e_i,e_j) 
$$
for $1 \le i < j \le n$.

So, taking the structure constants of an algebra, as well $\lambda$ as an input
(possibly involving parameters), we just solve the linear homogeneous system (\ref{sys}).

As GAP (version 4.4.12 as of time of this writing) does not support transcendental
field extensions -- which would be the natural way to work with parameters --
we are cheating by using cyclotomic fields instead.
However, this cheating could be made rigorous by picking a cyclotomic extension of a prime
degree (of course, any other field extension by an irreducible polynomial will do) larger than
the highest possible power of a parameter involved in computation.
For example, if we deal with $3$-dimensional algebras, the system (\ref{sys}) is 
of size $9 \times 12$, so if a parameter enters linearly into the initial data, 
any of its powers occuring in the solution of the system does not exceed $9$, so the cyclotomic 
extension of order $11$ will be enough.

\end{document}